\documentclass[journal]{IEEEtran}
\usepackage{cite}
\usepackage{amsmath,amssymb,amsfonts,amsthm}
\usepackage{graphicx}
\usepackage{booktabs}
\usepackage{longtable}
\usepackage{rotating}
\usepackage{adjustbox}
\usepackage{multirow}
\usepackage{array}
\usepackage{url}
\usepackage{xcolor}
\usepackage{placeins}
\usepackage{ifthen}
\usepackage[pass]{geometry}
\usepackage{pdflscape}
\usepackage[hidelinks]{hyperref}
\usepackage{pgfplots}\pgfplotsset{compat=1.18}
\graphicspath{{figs/}}

\newtheorem{remark}{Remark}
\newtheorem{lemma}{Lemma}
\newtheorem{corollary}{Corollary}
\newcommand{\ipoptopt}[1]{\begingroup\urlstyle{tt}\nolinkurl{#1}\endgroup}

\newif
\ifextended 
\extendedtrue 

\newcommand{\extendedcite}{~\cite{taheri2026warmstart_extended}}

\ifextended 
    \newcommand{\appProofs}{Appendix~A} 
    \newcommand{\appSuppSummaries}{Appendix~B} 
    \newcommand{\appBlockDiagnostics}{Appendix~C} 
    \newcommand{\appDualDCScaling}{Appendix~D} 
    \newcommand{\appRankings}{Appendix~E} 
\else 
    \newcommand{\appProofs}{Appendix~A of the extended version\extendedcite} \newcommand{\appSuppSummaries}{Appendix~B of the extended version\extendedcite} \newcommand{\appBlockDiagnostics}{Appendix~C of the extended version\extendedcite} \newcommand{\appDualDCScaling}{Appendix~D of the extended version\extendedcite} \newcommand{\appRankings}{Appendix~E of the extended version\extendedcite} 
\fi

\title{Not All Warm Starts Help: Benchmarking Primal-Dual Initializations for ACOPF Algorithms}
\author{Babak Taheri and Daniel K. Molzahn%
\thanks{Babak Taheri is with Hitachi Energy Research, Raleigh, NC 27606 USA. Daniel K. Molzahn is with the School of Electrical and Computer Engineering, Georgia Institute of Technology, Atlanta, GA 30332 USA (emails: babak.taheri@hitachienergy.com, molzahn@gatech.edu).}}

\begin{document}
\maketitle

\begin{abstract}
Warm starts are widely used to accelerate AC optimal power flow (ACOPF) solves, but the impact of different initialization strategies has received limited systematic study, particularly for the primal-dual interior-point methods that dominate large-scale ACOPF algorithms. This paper benchmarks initialization strategies for ACOPF solved with the interior-point solver IPOPT on 19 PGLib-OPF instances (5 to 30{,}000 buses), testing all 15 non-empty subsets of the primal blocks $\{P_g, Q_g, V_m, V_a\}$ under oracle conditions and three DC-seeded combinations in a practical setting. The experiments show that most partial primal-plus-dual restarts increase solve time or reduce convergence reliability. Among the oracle primal-plus-dual (O-PD) configurations, only the complete restart reliably converges on every baseline-convergent case, reaching a $47.6\%$ median solve-time speedup. Twelve of the 14 partial O-PD combinations have negative median speedups, and several fail repeatedly on larger networks. Decomposing the dual into constraint and bound multipliers shows that \emph{coverage}, not the presence of duals per se, governs robustness: the full bound-multiplier vector reaches 90.7\% convergence and a $+26.8$\% median speedup, whereas block-matched coverage (oracle multipliers on some bounds, defaults on the rest) drops to 70.4\% and $-31.1$\%. Practical DC seeding sometimes helps the AC solve, but the benefit is no longer statistically significant once the DCOPF presolve cost is included in the end-to-end comparison ($p = 0.4171$). For learned warm-start methods, the results support the following target ordering: predict the full primal vector first; if only partial coverage is possible, prioritize voltage variables; and avoid partial or inconsistent dual predictions unless the primal estimate is nearly complete.
\end{abstract}

\begin{IEEEkeywords}
AC optimal power flow, warm start, primal-dual initialization, DC approximation, IPOPT, interior-point methods.
\end{IEEEkeywords}

\section{Introduction}
\label{sec:intro}

Initialization matters in AC optimal power flow (ACOPF) solved with nonlinear interior-point methods. On large problems, the starting point influences the first barrier iterates, line-search behavior, restoration steps, and wall-clock time~\cite{nocedal2006numerical,wachter2006ipopt,wu2001warmstart}. In practice, those initialization choices are often discussed less explicitly than the modeling stack or solver settings, which makes it harder to tell whether a runtime improvement comes from the algorithm itself or from the way it is started. This gap matters most for ML-based warm-start methods, which must choose their prediction targets at training time without knowing which variables are safe or beneficial to predict.

This paper benchmarks ACOPF initialization strategies across 19 PGLib-OPF instances spanning 5 to 30{,}000 buses. The benchmarking process uses one AC modeling stack and one baseline case-data start, evaluates all $2^4 - 1 = 15$ non-empty subsets of the primal block set 
$\{P_g,Q_g,V_m,V_a\}$ (generator active power $P_g$, generator reactive power $Q_g$, bus voltage magnitude $V_m$, and bus voltage angle $V_a$) for the oracle AC families, and a practical DC-seeded family restricted to the three native DC combinations $\{P_g\}$, $\{V_a\}$, and $\{P_g,V_a\}$.
The benchmark also separates constraint multipliers from bound multipliers in the dual initialization and keeps AC solve time distinct from end-to-end (E2E) workflow time when DC presolve is involved. The main goal is to identify which primal and dual variables are worth initializing, and which ones tend to make the solve slower or less reliable.

A key motivation for this benchmark is to inform the design of machine-learning (ML) models that predict warm-start points for ACOPF solvers. 
Recent work has trained neural networks to predict primal or primal-dual ACOPF solutions~\cite{pan2023deepopf,zamzam2020learning}, used neural-network-predicted warm-start points to seed conventional solvers~\cite{baker2019learning}, and applied decision-tree methods for the same purpose~\cite{cao2023dtwarm}, yet the sensitivity of solver behavior to the choice of prediction targets has not been systematically quantified. The oracle experiments quantify this sensitivity, while the DC-seeded runs show what a physics-based predictor can offer without learning. The four initialization classes studied (primal-only, primal-plus-dual, dual-only, and practical DC-seeded) are defined in Section~\ref{sec:problem}. Throughout, ``oracle'' denotes a same-instance protocol in which the converged AC solution is available by construction.

\subsection*{Contributions}
\begin{enumerate}
    \item Isolate the effect of dual-initialization \emph{coverage}, showing that supplying the full bound-multiplier vector is substantially more robust than block-matched coverage (90.7\% vs.\ 70.4\% convergence; $+26.8$\% vs.\ $-31.1$\% median speedup).
    \item Provide the first subset-level benchmark over all 15 non-empty primal-block subsets and the three native DC-seeded combinations on 19 PGLib-OPF cases (5--30,000 buses), finding that 12 of 14 partial primal-plus-dual restarts are harmful in median terms.
    \item Separate AC solve time from end-to-end time and show that the one-shot DC-seeding benefit, present in AC solve time, is not significant once presolve cost is charged ($p=0.4171$).
    \item Translate the oracle results into ceiling-level target priorities for learned warm starts, framed as upper-bound hypotheses rather than out-of-sample claims.
\end{enumerate}
\section{Related Work}
\label{sec:related}

ACOPF pairs an economic dispatch objective with nonlinear AC network physics and operating constraints, and has long served as a standard benchmark in power systems~\cite{zimmerman2011matpower,coffrin2019pglib}. The problem is nonconvex and large scale, and its numerical sensitivity makes it a natural testbed for solver engineering and performance evaluation.
 
Primal-dual interior-point methods remain the standard for large-scale ACOPF, with IPOPT as a widely used reference implementation~\cite{wachter2006ipopt}. The general theory of interior-point methods and their sensitivity to initialization is well established~\cite{nocedal2006numerical}. Wu and Debs~\cite{wu2001warmstart} provided early empirical evidence that starting-point quality strongly influences convergence in nonlinear interior-point OPF solvers. Gondzio and Grothey~\cite{gondzio2003warmstart} analyzed LP reoptimization strategies and showed that multiplier information is beneficial when the primal state is close to the new optimum and potentially harmful otherwise. Their work, though restricted to LP, provides analogous theoretical intuition for the mechanism studied empirically here in ACOPF.
For nonlinear programs, Forsgren~\cite{forsgren2005} observes that interior-point iteration counts are relatively insensitive to the starting point, the very property that makes these methods difficult to warm-start, and shows that warm-starting from a near-optimal solution to a nearby problem is closely related to an SQP step on an equality-constrained problem.
 
DCOPF-based seeding is computationally inexpensive and supplies physically meaningful active-power and angle information~\cite{coffrin2014linear,stott2009dc}. DCOPF omits reactive-power balance and assumes per-unit voltages at unity, so $V_m$ and $Q_g$ are structurally unavailable from a standard DCOPF solve.
 
On the machine-learning side, two directions have emerged: predicting near-feasible or near-optimal ACOPF solutions directly, bypassing the conventional solver~\cite{pan2023deepopf,zamzam2020learning}, and predicting warm-start points to seed a conventional solver for faster convergence~\cite{baker2019learning}. Decision-tree-based warm-start methods have also been reported for ACOPF~\cite{cao2023dtwarm}. A
related concurrent work, the WARP benchmark~\cite{suri2026warp}, establishes that primal-only gains against a flat start largely vanish against IPOPT's $(l+u)/2$ default and that the complete primal-dual-barrier state is required for large oracle speedups. Our study addresses a question it leaves open: which subsets of the primal and dual blocks drive that behavior. We enumerate all 15 non-empty primal-block subsets and decompose the dual initialization into constraint multipliers, block-matched bound multipliers, and full bound-multiplier coverage, revealing that full bound-multiplier coverage is markedly more robust than block-matched coverage, a distinction WARP does not isolate. We report wall-clock behavior up to 30,000 buses against the case-data start practitioners typically deploy.
A complementary line of work learns dual information directly: dual conic proxies predict feasible dual solutions to convex relaxations of ACOPF, for both second-order-cone~\cite{qiu2024dualconic} and semidefinite~\cite{qiu2025dualsdp} relaxations, to produce valid optimality certificates. Those methods predict duals of a \emph{relaxed} problem to bound the optimum rather than to seed the AC solve, but they are relevant to the present question of which dual variables, if any, should be predicted for warm starts.
The oracle results quantify the same-instance restart behavior that warm-start predictors could approach under exact prediction, while also identifying variable subsets associated with lower failure rates and shorter solve times.

\section{Problem Setting and Initialization Families}
\label{sec:problem}

\subsection{ACOPF Formulation}

Let the ACOPF decision vector be
\begin{equation}
  x := (V_m,\; V_a,\; P_g,\; Q_g),
\end{equation}
where $V_m \in \mathbb{R}^{n_b}$ and $V_a \in \mathbb{R}^{n_b}$ are bus voltage magnitudes and angles, and $P_g \in \mathbb{R}^{n_g}$ and $Q_g \in \mathbb{R}^{n_g}$ are generator active and reactive outputs. The benchmarked ACOPF is
\begin{align}
  \min_x \quad & f(P_g) \label{eq:obj}\\
  \text{s.t.} \quad & g(x) = 0, \label{eq:eq}\\
                    & c(x) \le 0, \label{eq:ineq}\\
                    & l \le x \le u, \label{eq:bounds}
\end{align}
where $f(\cdot)$ is the generator cost function, $g(\cdot)$ encodes the nonlinear AC power-balance equations together with the reference-angle condition $V_a[\text{ref}] = 0$, $c(\cdot)$ collects general inequality constraints such as branch thermal limits, and $l \le x \le u$ collects the simple variable bounds present in the implemented ACOPF model. The separation of general inequality constraints~\eqref{eq:ineq} from simple bounds~\eqref{eq:bounds} reflects IPOPT's internal treatment: constraint multipliers $\lambda$ and $\mu_c$ are associated with $g$ and $c$, while distinct bound multipliers $z_L$ and $z_U$ are associated only with those components of $x$ that carry explicit finite lower and/or upper bounds in the implementation. This matters for the dual decomposition in Section~\ref{sec:dual_decomp}.

The associated Lagrangian is
\begin{align}
\label{eq:lagrangian}
  \mathcal{L}(x, \lambda, \mu_c, z_L, z_U) :=
    f(P_g) + \lambda^\top g(x) + \mu_c^\top c(x) \nonumber \\
    - z_L^\top (x - l) - z_U^\top (u - x),
\end{align}
where $\mu_c, z_L, z_U \ge 0$ at a KKT point. IPOPT solves a sequence of barrier subproblems parameterized by decreasing $\mu > 0$ via a primal-dual Newton method, making the algorithm sensitive to both the primal starting point $(V_m^0, V_a^0, P_g^0, Q_g^0)$ and the dual starting point $(\lambda^0, \mu_c^0, z_L^0, z_U^0)$~\cite{wachter2006ipopt,nocedal2006numerical}. For notational compactness in subsequent analysis, we write $\nu := (\lambda, \mu_c, z_L, z_U)$ for the full dual vector.

\subsection{Interior-Point Warm-Start Consistency}
\label{sec:ipm_init}

Interior-point warm starts are delicate because the supplied primal and dual values must be compatible with the barrier subproblem, not only with the original ACOPF KKT system. IPOPT pushes primal variables and bound multipliers into the interior using its warm-start bound-push parameters, which are set to $10^{-6}$ in this study (Section~\ref{sec:compute}). Therefore, a partial restart can be unstable when oracle values are supplied on some blocks while omitted primal blocks remain at baseline values and omitted bound multipliers retain IPOPT-default initialization. The resulting stationarity, complementarity, and barrier-centering mismatches are formalized in Section~\ref{sec:oracle_pd} (Lemma~\ref{lem:residual} and Corollaries~\ref{cor:compl}--\ref{cor:barrier}). A full barrier interpretation is provided in \appProofs{}.

\subsection{Baseline Case-Data Start}
\label{sec:baseline}

The baseline uses native MATPOWER or PGLib-OPF case data exactly as provided, with the four primal blocks read directly from each case file without modification. This case-data initialization serves as the reference for all speedup computations in this study; it is the start practitioners typically deploy, whereas IPOPT's $(l+u)/2$ midpoint is an alternative reference baseline. One case in the suite (8,387 buses) fails to converge under the baseline solver configuration and is therefore termed \emph{baseline-nonconvergent}. Because the downstream warm-start experiments in this protocol are constructed from the baseline solve state, that case is excluded from downstream warm-start comparisons and from all medians, speedup computations, and statistical tests, while still being retained in the overall case count.
\subsection{Oracle AC Primal-Only Restart}
\label{sec:oracle_po}

In the oracle primal-only (O-PO) family, the target ACOPF is solved once from the baseline start, and a specified subset of $\{P_g, Q_g, V_m, V_a\}$ from the converged solution is reused as the initial primal point for a second solve of the same problem. All 15 non-empty subsets are tested. Because no dual variables are supplied in this family, the primal-dual inconsistency mechanism discussed in Section~\ref{sec:oracle_pd} does not arise, although partial primal starts can still be infeasible or poorly centered. This leads to the more benign pattern discussed in Section~\ref{sec:blocks}.

\subsection{Oracle AC Primal-Plus-Dual Restart Families}
\label{sec:oracle_pd}

The oracle primal-plus-dual families augment O-PO by also supplying IPOPT's dual variables from the first solve. Because the dual vector comprises two structurally distinct components, constraint multipliers $(\lambda^\star, \mu_c^\star)$ for the equality and inequality constraints and bound multipliers $(z_L^\star, z_U^\star)$ for the variable bounds, the dual initialization is decomposed into four modes:

\begin{enumerate}
  \item \textbf{Block-matched primal-plus-dual (O-PD):} Supplies constraint multipliers $(\lambda^\star, \mu_c^\star)$ and the bound multipliers associated with the bound-constrained variables in the initialized primal blocks.
  \item \textbf{Constraint-dual-only (O-CD):} Supplies constraint multipliers $(\lambda^\star, \mu_c^\star)$ without any bound multipliers.
  \item \textbf{Bounds-dual-only (O-BD):} Supplies the corresponding block-matched bound multipliers without constraint multipliers.
  \item \textbf{All-bounds primal-plus-dual (O-PD-AB):} Supplies constraint multipliers $(\lambda^\star, \mu_c^\star)$ together with the full bound-multiplier vector for all bound-constrained variables, not just those associated with the initialized blocks.
\end{enumerate}

All four modes are tested across all 15 non-empty primal-block subsets, yielding $15 \times 4 = 60$ dual-augmented combinations (plus the 15 primal-only combinations). Supplying dual variables alongside an \emph{incomplete} primal set creates a primal-dual inconsistency whose severity depends on which dual components are present.
Throughout, $\|\cdot\|$ denotes the Euclidean ($\ell_2$) norm for vectors and the induced spectral norm for matrices.

\begin{lemma}[Stationarity residual under partial primal replacement]
\label{lem:residual}
Let $(x^\star, \nu^\star)$ satisfy the first-order KKT conditions of the ACOPF with Lagrangian~\eqref{eq:lagrangian}, so that the stationarity residual
\begin{equation}
  r(x) := \nabla_x \mathcal{L}(x, \nu^\star)
\end{equation}
satisfies $r(x^\star) \approx 0$ to solver tolerance. A partial restart supplies $\tilde{x}$ with $\tilde{x}_S = x^\star_S$ for the initialized block set $S$ and $\tilde{x}_{\bar{S}} = x^0_{\bar{S}}$ for the remaining blocks $\bar{S} \neq \emptyset$. Then
\begin{equation}
\label{eq:sensitivitybound}
  \|r(\tilde{x})\| \le \|r(x^\star)\|
  + M \cdot \|\tilde{x} - x^\star\|,
\end{equation}
where $M := \sup_{\xi \in [x^\star,\, \tilde{x}]} \bigl\|\nabla^2_{xx} \mathcal{L}(\xi, \nu^\star)\bigr\|$ is finite because the segment $[x^\star,\, \tilde{x}]$ is compact and $\nabla^2_{xx}\mathcal{L}(\cdot, \nu^\star)$ is continuous by $C^2$ smoothness of $f$, $g$, $c$, and the affine bound terms. In ACOPF, the affine bound terms contribute nothing to this Hessian, which is determined by $\nabla^2 f$ together with the nonlinear AC network equations in $g(x)$ and $c(x)$. In particular, the full restart ($\bar{S} = \emptyset$) trivially yields $\|\tilde{x} - x^\star\| = 0$ and recovers $\|r(\tilde{x})\| \approx 0$.
\end{lemma}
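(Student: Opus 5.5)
The plan is to apply the fundamental theorem of calculus to the vector field $r(\cdot)=\nabla_x\mathcal{L}(\cdot,\nu^\star)$ along the segment from $x^\star$ to $\tilde{x}$, with the multiplier vector held fixed. First we check that $r$ is $C^1$ on a neighborhood of the segment. The terms $f$, $g$, and $c$ are $C^2$: the cost is polynomial in $P_g$, and the AC power-flow and branch-flow expressions are polynomials in $V_m$ and trigonometric in $V_a$. The bound terms $-z_L^{\star\top}(x-l)-z_U^{\star\top}(u-x)$ are affine in $x$, so they add only the constant vector $-z_L^\star+z_U^\star$ to $r$ and nothing to its Jacobian. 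The Jacobian of $r$ is therefore $\nabla^2_{xx}\mathcal{L}(\cdot,\nu^\star)=\nabla^2 f+\sum_i\lambda_i^\star\nabla^2 g_i+\sum_j\mu_{c,j}^\star\nabla^2 c_j$, which is continuous. This step also justifies the remark in the statement that the bound terms do not contribute to $M$.

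Next, set $\gamma(t)=x^\star+t(\tilde{x}-x^\star)$ for $t\in[0,1]$. Then
\begin{equation*}
r(\tilde{x})-r(x^\star)=\int_0^1 \nabla^2_{xx}\mathcal{L}(\gamma(t),\nu^\star)\,(\tilde{x}-x^\star)\,dt .
\end{equation*}
Taking Euclidean norms and using the induced spectral norm inside the integral gives $\|r(\tilde{x})-r(x^\star)\|\le \sup_{t}\|\nabla^2_{xx}\mathcal{L}(\gamma(t),\nu^\star)\|\,\|\tilde{x}-x^\star\| = M\|\tilde{x}-x^\star\|$. The triangle inequality $\|r(\tilde{x})\|\le\|r(x^\star)\|+\|r(\tilde{x})-r(x^\star)\|$ then yields \eqref{eq:sensitivitybound}. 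The supremum $M$ is finite because $t\mapsto\|\nabla^2_{xx}\mathcal{L}(\gamma(t),\nu^\star)\|$ is a continuous function on the compact interval $[0,1]$: $\gamma$ is continuous, the Hessian is continuous, and the operator norm is continuous. Hence the supremum is attained.

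Finally, we record the structure of the displacement. Because $\tilde{x}_S=x^\star_S$, we have $\tilde{x}-x^\star=(0,\ x^0_{\bar S}-x^\star_{\bar S})$, so $\|\tilde{x}-x^\star\|=\|x^0_{\bar S}-x^\star_{\bar S}\|$. When $\bar S=\emptyset$ this vanishes, and the bound collapses to $\|r(\tilde{x})\|=\|r(x^\star)\|\approx 0$, which is the full-restart claim.

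The main obstacle is the step I would check first: the vector-valued mean-value bound must be done with the integral form rather than a pointwise mean-value theorem, since no single intermediate point $\xi$ works for vector-valued maps. The other point needing care is confirming that the Hessian is continuous along the whole segment. It is, because the ACOPF functions are entire in $x$ (polynomials and trigonometric functions in the rectangular-free polar form), so no domain restriction on the segment is needed, even if the segment leaves the box $[l,u]$.
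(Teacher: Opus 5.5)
Your proposal is correct and follows the paper's proof: the integral form of the fundamental theorem of calculus applied to $r$ along the segment from $x^\star$ to $\tilde{x}$, using $\nabla_x r = \nabla^2_{xx}\mathcal{L}(\cdot,\nu^\star)$, then bounding the integrand by $M\|\tilde{x}-x^\star\|$ and applying the triangle inequality. Your extra checks fill in details the paper leaves implicit without changing the route: finiteness of $M$ by continuity on $[0,1]$, the affine bound terms adding only the constant $-z_L^\star+z_U^\star$ to $r$, and the block structure of the displacement.
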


\begin{proof}
By the fundamental theorem of calculus along the segment from $x^\star$ to $\tilde{x}$,
\begin{equation}
  r(\tilde{x}) - r(x^\star)
  = \int_0^1 \nabla_x r\!\bigl(x^\star + t(\tilde{x} - x^\star)\bigr)\,(\tilde{x} - x^\star)\,dt.
\end{equation}
Since $\nabla_x r = \nabla^2_{xx}\mathcal{L}$, taking norms and applying the triangle inequality gives \eqref{eq:sensitivitybound}. Because $\tilde{x}_{\bar{S}} \neq x^\star_{\bar{S}}$ in general and $r(x^\star) \approx 0$ to solver tolerance, $\|r(\tilde{x})\|$ is controlled by the residual at $x^\star$ together with an $M$-scaled displacement term for the omitted blocks.
\end{proof}

\begin{corollary}[Complementarity gap under partial primal replacement]
\label{cor:compl}
Under the setup of Lemma~\ref{lem:residual}, the converged solution satisfies the complementarity conditions $(x^\star_i - l_i)\,z_{L,i}^\star \approx 0$ and $(u_i - x^\star_i)\,z_{U,i}^\star \approx 0$ for all components $i$. For any component $i \in \bar{S}$ with $\tilde{x}_i \neq x^\star_i$:
\begin{equation}
  (\tilde{x}_i - l_i)\,z_{L,i}^\star
  = \underbrace{(x^\star_i - l_i)\,z_{L,i}^\star}_{\approx\, 0}
  + (\tilde{x}_i - x^\star_i)\,z_{L,i}^\star.
\end{equation}
If $z_{L,i}^\star > 0$, then the lower bound $x_i \ge l_i$ is active at optimality, and the second term is generically nonzero, producing a complementarity residual of order $|\tilde{x}_i - x_i^\star| \cdot z_{L,i}^\star$. An analogous statement holds for upper-bound multipliers $z_U^\star$.
\end{corollary}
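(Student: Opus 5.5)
The plan is a direct algebraic verification, followed by a short argument for the two qualitative claims: that $z_{L,i}^\star>0$ forces activity, and that the residual is ``generically nonzero.''

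First, I recall the KKT system underlying Lemma~\ref{lem:residual}. Write the Lagrangian~\eqref{eq:lagrangian} with $z_L^\star, z_U^\star \ge 0$. First-order optimality includes the complementarity conditions $(x^\star_i - l_i)z_{L,i}^\star = 0$ and $(u_i - x^\star_i)z_{U,i}^\star = 0$. These hold exactly at a true KKT point. At IPOPT's converged iterate they hold up to the barrier parameter and complementarity tolerance, so each product is of order $\mu$, which justifies the ``$\approx 0$'' annotations.

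Second, for $i \in \bar S$, I will add and subtract $x_i^\star$ inside the slack:
\[
(\tilde x_i - l_i)\,z_{L,i}^\star = \bigl((x_i^\star - l_i) + (\tilde x_i - x_i^\star)\bigr)z_{L,i}^\star .
\]
Distributing gives the displayed identity exactly, with no approximation. The first term is the converged complementarity product. The second term, $(\tilde x_i - x_i^\star)z_{L,i}^\star$, is the perturbation caused by leaving block $\bar S$ at its baseline value $x^0_i$. The same manipulation applied to $(u_i - \tilde x_i)z_{U,i}^\star$ gives the upper-bound analogue $(u_i - x^\star_i)z_{U,i}^\star - (\tilde x_i - x^\star_i)z_{U,i}^\star$, with a sign flip that does not affect the magnitude.

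Third, for the activity claim: if $z_{L,i}^\star > 0$ exactly, complementarity forces $x_i^\star = l_i$. At a solver iterate the statement is that $x_i^\star - l_i = O(\mu/z_{L,i}^\star)$, so the bound is active up to tolerance. Then $\tilde x_i - l_i \approx \tilde x_i - x_i^\star$. Consequently the product $z_{L,i}^\star(\tilde x_i - l_i)$ has magnitude $|\tilde x_i - x_i^\star|\,z_{L,i}^\star$ up to $O(\mu)$, which is strictly positive whenever $x^0_i \neq x^\star_i$.

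The only delicate step is making ``generically nonzero'' precise. I would phrase it as follows: the set of baseline values $x^0_i$ for which the residual vanishes is the single point $x^0_i = x^\star_i$. This is a measure-zero event, so outside that point the residual is nonzero. I would also note that this residual is measured relative to the barrier target $\mu$, anticipating the centering discussion in the subsequent corollary. Beyond this, the proof needs nothing from Lemma~\ref{lem:residual} except its setup and notation.
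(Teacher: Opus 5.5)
Your proposal is correct and takes the same route as the paper, which gives no separate proof and justifies the corollary by the add-and-subtract identity displayed in the statement, together with complementarity forcing $x_i^\star = l_i$ when $z_{L,i}^\star > 0$. One small point: the step you flag as delicate is not. Under the stated hypotheses $\tilde x_i \neq x_i^\star$ and $z_{L,i}^\star > 0$, the second term is a product of two nonzero reals and is therefore nonzero outright, so no measure-zero argument is needed (and applied to the full product, the zero set would be $x_i^0 = l_i$ rather than $x_i^0 = x_i^\star$).
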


\begin{corollary}[Barrier-centering gap under reused bound multipliers]
\label{cor:barrier}
Under the setup of Corollary~\ref{cor:compl}, let IPOPT restart with barrier parameter $\mu_{\text{init}} > 0$. For any lower-bound multiplier component $z_{L,i}^\star$ reused together with a primal component whose initial value differs from $x_i^\star$,
\begin{equation}
  (\tilde{x}_i - l_i) z_{L,i}^\star - \mu_{\text{init}}
  = \bigl((x_i^\star - l_i) z_{L,i}^\star - \mu_{\text{init}}\bigr)
  + (\tilde{x}_i - x_i^\star) z_{L,i}^\star.
\end{equation}
An analogous identity holds for the upper-bound term $(u_i-\tilde{x}_i) z_{U,i}^\star - \mu_{\text{init}}$. When $|\tilde{x}_i - x_i^\star| \cdot z_{L,i}^\star \gg |(x_i^\star-l_i)z_{L,i}^\star-\mu_{\text{init}}|$, the displacement term dominates the barrier-centering residual. Thus, even when the saved multipliers are locally consistent with the near-final point of the first solve, partial primal replacement generically perturbs IPOPT's barrier-centering equations by an amount proportional to the omitted-block displacement times the reused bound multiplier.
\end{corollary}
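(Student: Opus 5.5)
The identity in Corollary~\ref{cor:barrier} is purely algebraic, so I will verify it directly and then interpret it. Fix a lower-bounded component $i$ whose multiplier $z_{L,i}^\star$ is reused and whose primal value $\tilde{x}_i$ differs from $x_i^\star$. The component is either in $\bar{S}$ under O-PD-AB coverage, or it is pushed off $x_i^\star$ by the bound-push step. Since $\tilde{x}_i - l_i = (x_i^\star - l_i) + (\tilde{x}_i - x_i^\star)$, I multiply by $z_{L,i}^\star$ and subtract $\mu_{\text{init}}$ from both sides. This gives the displayed identity immediately; it is the same decomposition used in Corollary~\ref{cor:compl}, shifted by the constant $\mu_{\text{init}}$. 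For the upper bound, I write $u_i - \tilde{x}_i = (u_i - x_i^\star) - (\tilde{x}_i - x_i^\star)$ and multiply by $z_{U,i}^\star$. The displacement term then enters with a minus sign, which I will note explicitly, since the ``analogous identity'' is analogous only up to this sign.

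Next comes the dominance claim. I apply the reverse triangle inequality to the identity:
\begin{equation*}
  \bigl|(\tilde{x}_i - l_i) z_{L,i}^\star - \mu_{\text{init}}\bigr|
  \ge |\tilde{x}_i - x_i^\star|\, z_{L,i}^\star - \bigl|(x_i^\star - l_i) z_{L,i}^\star - \mu_{\text{init}}\bigr|.
\end{equation*}
Under the stated hypothesis $|\tilde{x}_i - x_i^\star| z_{L,i}^\star \gg |(x_i^\star-l_i)z_{L,i}^\star-\mu_{\text{init}}|$, the right-hand side equals $|\tilde{x}_i - x_i^\star| z_{L,i}^\star\,(1 - o(1))$. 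Hence the barrier-centering residual of IPOPT's perturbed complementarity equation $(x_i - l_i) z_{L,i} = \mu$ is, to leading order, exactly the displacement times the reused multiplier. I will also record the matching upper bound from the ordinary triangle inequality, so the residual is pinned between these two quantities.

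Finally, I justify the qualifier ``even when the saved multipliers are locally consistent.'' At the final iterate of the first solve, IPOPT's termination ensures $(x_i^\star - l_i) z_{L,i}^\star \approx \mu_{\text{final}}$, so the first bracket is roughly $|\mu_{\text{final}} - \mu_{\text{init}}|$. With $\mu_{\text{init}}$ chosen small for the warm start, this bracket is small. Meanwhile, when the bound is active ($z_{L,i}^\star > 0$ not small, per Corollary~\ref{cor:compl}), a baseline value $x^0_i$ from case data typically sits an $O(1)$ distance from $x_i^\star$. The hypothesis is therefore generically satisfied, and this yields the ``generically perturbs'' conclusion.

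The main obstacle is not the algebra but making ``generically'' and ``$\gg$'' honest. The corollary is conditional on the dominance hypothesis and cannot be proved unconditionally. I would present it as: identity (exact) plus bounds (exact) plus a remark that the hypothesis holds whenever the omitted-block displacement is bounded away from zero while $\mu_{\text{init}}$ and $\mu_{\text{final}}$ are small.
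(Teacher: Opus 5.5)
Your proposal is correct and follows the paper's own (implicit) argument: the identity is just $\tilde{x}_i - l_i = (x_i^\star - l_i) + (\tilde{x}_i - x_i^\star)$ multiplied by $z_{L,i}^\star$ and shifted by $\mu_{\text{init}}$, which is the same decomposition used in Corollary~\ref{cor:compl}. Your reverse-triangle-inequality bounds and the explicit minus sign in the upper-bound version are harmless sharpenings of the paper's informal ``dominates'' statement; the bound-push case you mention is unnecessary, since $\tilde{x}$ is the supplied point and the identity holds for any value of $\tilde{x}_i$.
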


Partial primal-plus-dual starts incur a stationarity residual bounded by the omitted-block displacement (Lemma~\ref{lem:residual}), together with complementarity and barrier-centering inconsistencies that are generically nonzero (Corollaries~\ref{cor:compl}--\ref{cor:barrier}), especially when $\mu_{\text{init}} = 10^{-6}$ (Section~\ref{sec:compute}). Corollaries~\ref{cor:compl} and~\ref{cor:barrier} are stated for components whose bound multiplier is reused at its optimal value $z^\star$; this corresponds to the all-bounds (O-PD-AB) and dual-only modes. In the block-matched O-PD mode the omitted-block bound multipliers instead retain IPOPT defaults, so block-matched restarts carry the additional inconsistency that the reused optimal constraint multipliers $(\lambda^\star,\mu_c^\star)$ no longer match those default bound multipliers; the empirical ordering O-PD-AB $>$ O-PD in Section~\ref{sec:dual_decomp} is consistent with this additional term being harmful. This additional stationarity perturbation is made precise in \appProofs{}. The issue is therefore not that bound multipliers are intrinsically harmful. Rather, a partial restart mixes oracle primal and dual information on some blocks with baseline primal values and default dual initialization on others, producing an initial state that is not mutually consistent with the KKT and barrier-centering equations. These results are a local inconsistency rationale rather than a proof of a specific IPOPT failure mode.

\subsection{Dual-Only Control Experiments}
\label{sec:dual_only}

To probe the effect of dual information in the absence of any primal improvement, three dual-only experiments supply dual variables from the converged solution while leaving all four primal blocks at baseline values: constraint multipliers only (DO-C), all bound multipliers only (DO-B), and the full dual vector (DO-F). These are ablation tests that help characterize the role of each dual component; they are not intended as practical initialization strategies.

\subsection{Practical DC-Seeded Starts}
\label{sec:dc}

The practical family begins with a DCOPF solve. Only $P_g$ and $V_a$ are available from DCOPF; $V_m$ and $Q_g$ remain at baseline case-data values throughout. A single DCOPF computation produces both quantities; the three tested combinations are:
\begin{equation}
  \{P_g\}, \quad \{V_a\}, \quad \{P_g, V_a\}.
\end{equation}
The DCOPF presolve cost is common to all three, so differences in E2E times across combinations reflect only differences in AC solve time and case convergence.

\section{Experimental Protocol}
\label{sec:protocol}

\subsection{Benchmark Suite}

The study comprises 19 benchmark instances from PGLib-OPF~\cite{coffrin2019pglib} spanning 5 to 30,000 buses and 6 to 35,393 branches, covering small instructional networks (PJM, IEEE, EPRI), medium IEEE and ACTIV systems, and large PEGASE, GOC, RTE, and EpiGrids instances. As noted in Section~\ref{sec:baseline}, one case (8,387 buses) is baseline-nonconvergent and is retained in the overall case count ($n=19$) but excluded from all downstream comparisons, giving an effective sample of $n=18$.
\subsection{Recorded Metrics and Timing Conventions}

Four quantities are recorded per run:
\begin{enumerate}
  \item \textbf{AC solve time}: wall-clock IPOPT solve time, excluding model construction.
  \item \textbf{Total time}: model build plus ACOPF solve time.
  \item \textbf{E2E time}: complete one-shot workflow time, including DCOPF presolve for DC-seeded methods; equal to total time for oracle and baseline methods.
  \item \textbf{Solve-time speedup}: $(t_{\text{base}} - t_{\text{method}}) / t_{\text{base}} \times 100\%$.
\end{enumerate}

Two conventions require emphasis. First, median speedup values are medians of per-case percentage improvements, not the percentage derived from the ratio of suite-level median times; these are different statistics and need not agree in sign. Second, all timing measurements are single-run wall-clock values. For the smallest cases (baseline solve times of 20--40~ms), OS scheduling jitter can dominate, so speedup figures for cases with baseline solve times below approximately 100~ms should be interpreted as indicative rather than precise. Accordingly, all conclusions are framed in terms of the sign and ranking of effects on medium- and large-scale cases, where relative jitter is small, rather than the precise magnitude of any individual speedup; per-case percentages for sub-100\,ms cases are reported for completeness but support no claim.

A run is classified as failed~(F) if the solver termination condition is not reported as \texttt{optimal}, \texttt{locallyOptimal}, or, when available, \texttt{globallyOptimal}. Failed runs are excluded from all medians, means, and speedup computations. In this benchmark, all observed failures terminated at the iteration limit (300 iterations) rather than with an infeasibility declaration, which is more consistent with slow or stalled convergence than with an explicit infeasibility diagnosis.

\subsection{Static vs.\ Case-Wise Best Views}

The static ranking applies one fixed policy uniformly to all cases: ``What single strategy should a practitioner deploy without case-specific tuning?'' When selecting the best static policy within a family, the criterion is minimum median AC solve time over converged cases. The case-wise best selects the best-performing method within a family for each case \emph{ex post}: ``What is the oracle upper bound of this initialization family?'' Case-wise best rows are upper-bound diagnostics, not prescriptions for deployment. Accordingly, significance markers for case-wise best rows should be interpreted only as descriptive summaries of the selected envelopes, not as confirmatory tests of a pre-specified method.

A subtlety exposed by this benchmark is that the minimum-median-solve-time criterion can select policies with low convergence rates: if a policy converges only on smaller, faster cases, its median over those cases may be low even though it fails on the harder instances that dominate practical workloads. This survivor-bias effect is visible in Table~\ref{tab:global_summary} for the best static O-PD policy and, more mildly, for the best static O-PO policy as well.

\subsection{Statistical Testing}
\label{sec:stats}

Matched-case pairwise comparisons use the two-sided Wilcoxon signed-rank test~\cite{wilcoxon1945} on pairs of cases where both methods converge. Because the 8,387-bus case is \emph{baseline-nonconvergent} under the study protocol and is excluded from downstream warm-start comparisons, the effective sample is $n = 18$ for most comparisons; comparisons involving the bounds-dual-only family (which fails on additional cases) have smaller effective samples as noted in Table~\ref{tab:pairwise_tests}.

Unless a smaller matched set is reported, these tests therefore include the smallest baseline-convergent cases as well as the medium and large instances. Because all timings are single-run measurements, the tests assess systematic differences \emph{across the case population}, not timing variance under repeated execution of any individual case. The qualitative interpretation is anchored by the medium- and large-case summaries as well as by the full-sample $p$-values. Twelve family-level contrasts are tested simultaneously; both Holm sequential correction~\cite{holm1979} and Bonferroni correction are reported at $\alpha = 0.05$. The Hodges--Lehmann estimator~\cite{hodges1963estimates} of the median paired difference is reported as an effect-size complement to each $p$-value. All statistical computations used SciPy~1.11.2~\cite{scipy2020}.

\subsection{Computational Environment}
\label{sec:compute}

All experiments were conducted on a MacBook Pro with Apple M4 chip and 16\,GB RAM, running macOS. ACOPF runs used IPOPT single-threaded with the MUMPS~5.5.1~\cite{amestoy2001mumps} linear solver, also single-threaded, to ensure wall-clock times reflect sequential solver behavior. The software stack comprised Python~3.11.4, Pyomo~6.6.2~\cite{hart2017pyomo}, SciPy~1.11.2~\cite{scipy2020}, and IPOPT~3.14.13 built from source (MUMPS only). All ACOPF problems were solved with IPOPT; the DCOPF presolve in the DC-seeded family was solved with HiGHS~\cite{huangfu2018highs}. The released code is solver-agnostic: it selects the DCOPF solver automatically from those installed, with IPOPT as a fallback, and the IPOPT linear solver is whichever the IPOPT build provides. Thus, either step can target a different installed solver without code changes. We used HiGHS and MUMPS here; Gurobi for DCOPF and HSL linear solvers such as MA27, MA57, or MA97 for IPOPT may be used when available. ACOPF runs used convergence tolerance $10^{-6}$ and maximum iteration limit~300. All families that supply dual information set
\ipoptopt{warm_start_init_point=yes},
\ipoptopt{warm_start_bound_push=1e-6},
\ipoptopt{warm_start_bound_frac=1e-6}, and
\ipoptopt{warm_start_mult_bound_push=1e-6}, with
$\mu_{\text{init}} = 10^{-6}$~\cite{wachter2006ipopt,ipopt_options}.\footnote{Code repository: \url{https://github.com/BabakTaheri1/acopf_warmstart_benchmark}}

For oracle experiments, the Pyomo model is rebuilt from scratch for each run from a shared case-data context (precomputed network admittance matrices and cost structures), providing partial isolation from in-process caching effects, but both solves (the reference solve and the warm-started solve) execute within the same OS process. Because any residual second-solve advantage (e.g., from OS-level caching or memory layout) would favor the warm-started run, the reported oracle speedups should be read as upper bounds on the pure initialization effect. For all converged warm-start runs, the returned objective value was verified to match the baseline objective value to within the study tolerance, which is consistent with convergence to the same local optimum as the baseline but does not by itself prove identity of the local solution or KKT point.
A fully controlled baseline-vs-baseline re-solve experiment to quantify this confound was not conducted; see Section~\ref{sec:discussion} for further discussion.


\section{Overall Quantitative Results}
\label{sec:global}

Table~\ref{tab:global_summary} summarizes the benchmark at the family level. The static rows show what happens when one fixed policy is used across all cases. The case-wise best rows show the best outcome each family could achieve if the winning combination were chosen separately for each case after the fact. Within the block-matched O-PD family, the robust fixed policy is the full-vector $P_g+Q_g+V_m+V_a$ restart, which converges on all 18 baseline-convergent cases. In the case-wise best view, the best O-PD envelope reaches a $47.6\%$ median AC solve-time speedup, while the best O-PD-AB envelope reaches $50.3\%$. That $50.3\%$ figure is an ex-post upper bound; the best fixed O-PD-AB policy in the full results matrix is the full-vector $P_g{+}Q_g{+}V_m{+}V_a$ restart at $50.1\%$.

The complete case-level warm-start matrix is provided in Table~\ref{tab:all_warmstarts_matrix}.

The O-PD results show a clear separation between the reliable full restart and the degraded performance of most partial restarts. In this family, 12 of the 14 partial combinations have negative median speedups. The two combinations that remain positive in median terms, $P_g{+}V_m{+}V_a$ ($+6.1\%$) and $P_g{+}Q_g{+}V_m$ ($+3.2\%$), converge on only 15/18 and 14/18 baseline-convergent cases, respectively. Several one- and two-block combinations have median slowdowns below $-60\%$ and repeated failures on larger instances.

The DC-seeded family shows a weaker and less consistent effect. In the case-wise best view, the best DC seed achieves a $12.5\%$ AC solve-time speedup. That effect appears in the matched-case comparison against baseline solve time (raw $p = 0.0237$), but it does not survive Holm or Bonferroni correction. Once the DC presolve cost is included, the comparison against baseline total time is not statistically significant (raw $p = 0.4171$). Thus, DC information can reduce AC solve time, but in this one-shot benchmark the reduction does not generally offset the DCOPF presolve cost.

The static ranking also illustrates why convergence must be read alongside medians.
The minimum-median-solve-time rule selects $V_m$ as the best static O-PD policy (median solve 0.270~s over its 10 converged cases), yet that policy converges on only 10 of 18 baseline-convergent cases; its median speedup over those same 10 cases is $-93.9\%$, reflecting that even the cases it solves are typically slower than the baseline. In this family, the robust fixed policy is the full-vector restart, not the minimum-median survivor-bias winner.

\begin{table*}[!t]
\centering
\caption{Global runtime summary. Counts and medians use successful runs only; static rows may reflect survivor bias, and case-wise best rows are ex-post upper bounds. E2E time includes DCOPF presolve only for DC-seeded methods.}
\vspace{-1em}
\label{tab:global_summary}
\small
\setlength{\tabcolsep}{6pt}
\begin{adjustbox}{width=\textwidth}
\begin{tabular}{p{8.1cm}rrrrrrr}
\toprule
Method & Cases & Converged & Median solve [s] & Median total [s] & Median E2E [s] & Median iter & Median speedup [\%] \\
\midrule
Baseline case-data start & 19 & 18 & 3.826 & 4.005 & 4.005 & 39 & 0.0 \\
\midrule
\multicolumn{8}{l}{\textit{Static diagnostic rows (one fixed policy applied to all cases; minimum-median selection shown for illustration, not as the recommended deployment rule)}} \\
Best static Oracle AC primal-only ($V_m{+}Q_g$) & 19 & 16 & 1.837 & 1.975 & 1.975 & 37 & 2.9 \\
Best static Oracle AC primal+dual ($V_m$) & 19 & 10 & 0.270 & 0.380 & 0.380 & 54 & -93.9 \\
Best static Oracle AC constraint-dual-only ($V_a{+}V_m{+}Q_g$) & 19 & 12 & 0.495 & 0.519 & 0.519 & 73 & -51.6 \\
Best static Oracle AC bounds-dual-only ($V_m$) & 19 & 11 & 0.301 & 0.332 & 0.332 & 99 & -79.3 \\
Best static Oracle AC primal+dual (all bounds) ($V_m{+}P_g{+}Q_g$) & 19 & 14 & 0.765 & 0.812 & 0.812 & 27 & -19.8 \\
Best static DC-seeded ($V_a$) & 19 & 17 & 2.630 & 3.050 & 3.239 & 38 & 3.3 \\
\midrule
\multicolumn{8}{l}{\textit{Case-wise best (ex-post upper bound per family; descriptive only)}} \\
Case-wise best oracle AC primal-only & 19 & 18 & 3.258 & ---\rlap{$^\dagger$} & ---\rlap{$^\dagger$} & 30 & 21.6 \\
Case-wise best oracle AC primal+dual & 19 & 18 & 1.804 & ---\rlap{$^\dagger$} & ---\rlap{$^\dagger$} & 4 & 47.6 \\
Case-wise best constraint-dual & 19 & 18 & 1.944 & ---\rlap{$^\dagger$} & ---\rlap{$^\dagger$} & 6 & 44.3 \\
Case-wise best bounds-dual & 19 & 17 & 3.104 & ---\rlap{$^\dagger$} & ---\rlap{$^\dagger$} & 17 & 19.5 \\
Case-wise best primal+dual all-bounds & 19 & 18 & 1.784 & ---\rlap{$^\dagger$} & ---\rlap{$^\dagger$} & 4 & 50.3 \\
Case-wise best DC-seeded AC solve & 19 & 18 & 3.840 & ---\rlap{$^\dagger$} & ---\rlap{$^\dagger$} & 32 & 12.5 \\
Case-wise best DC end-to-end & 19 & 18 & 4.628 & ---\rlap{$^\dagger$} & ---\rlap{$^\dagger$} & 32 & -5.1 \\
\bottomrule
\end{tabular}
\end{adjustbox}
{\footnotesize $^\dagger$ For case-wise best envelopes, each case may select a different combination. The reported median solve time is therefore the median of the per-case best solve times within that family, not the solve time of any single fixed policy; total/E2E columns are omitted for that reason.}
\end{table*}
Table~\ref{tab:pairwise_tests} reports matched-case statistical tests. For each row, Med.~A and Med.~B are computed on that row's matched set of cases where both methods converge; they are therefore not always identical to the family-level medians reported in Table~\ref{tab:global_summary}. The Hodges--Lehmann estimator $\widehat{\Delta}_{HL}$ is computed from Walsh averages of the paired time differences (time~B $-$ time~A), so negative values indicate that method~B is systematically faster. The ``Wins~A'' and ``Wins~B'' columns count case-level pairwise time wins and need not agree in sign with $\widehat{\Delta}_{HL}$ when the distribution of paired differences is skewed. The ex-post case-wise best envelopes of both the O-PD and O-CD families are descriptively lower than the baseline ($p < 0.001$ on all 18 matched pairs). The O-BD family is not descriptively different from the baseline ($p = 0.6777$). The E2E comparison of baseline total time against the best DC E2E workflow is not significant ($p = 0.4171$).

\begin{table*}[!t]
\centering
\caption{Matched-case Wilcoxon signed-rank comparisons for the principal family-level benchmarks. Negative $\widehat{\Delta}_{HL}$ values favor method B. Rows marked [UB] are ex-post upper-bound comparisons, so the reported $p$-values are descriptive rather than confirmatory.}
\vspace{-1em}
\label{tab:pairwise_tests}
\small
\setlength{\tabcolsep}{4pt}
\begin{adjustbox}{width=\textwidth}
\begin{tabular}{p{6.7cm}rrrrrrrrr}
\toprule
Comparison & $n$ & Med.\ A [s] & Med.\ B [s] & Wins A & Wins B & $\widehat{\Delta}_{HL}$ [s] & Raw $p$ & Holm $p$ & Bonf.\ $p$ \\
\midrule
Baseline vs. oracle primal-only [UB] & 18 & 3.826 & 3.258 & 0 & 18 & -2.409 & $<$0.0001 & $<$0.0001 & $<$0.0001 \\
Baseline vs. oracle primal+dual [UB] & 18 & 3.826 & 1.804 & 0 & 18 & -5.329 & $<$0.0001 & $<$0.0001 & $<$0.0001 \\
Baseline solve vs. best DC solve [UB] & 18 & 3.826 & 3.840 & 4 & 14 & -1.221 & 0.0237 & 0.1184 & 0.2841 \\
Baseline total vs. best DC E2E [UB] & 18 & 4.005 & 4.628 & 12 & 6 & 0.090 & 0.4171 & 0.9114 & 1.0000 \\
Oracle primal+dual vs. DC solve [UB] & 18 & 1.804 & 3.840 & 18 & 0 & 3.570 & $<$0.0001 & $<$0.0001 & $<$0.0001 \\
Oracle primal+dual vs. DC E2E [UB] & 18 & 1.804 & 4.628 & 18 & 0 & 6.274 & $<$0.0001 & $<$0.0001 & $<$0.0001 \\
Oracle primal+dual vs. constraint-dual [UB] & 18 & 1.804 & 1.944 & 13 & 5 & 0.082 & 0.0814 & 0.3257 & 0.9771 \\
Oracle primal+dual vs. bounds-dual [UB] & 17 & 1.337 & 3.104 & 16 & 1 & 1.469 & $<$0.0001 & 0.0005 & 0.0009 \\
Oracle primal+dual vs. primal+dual all-bounds [UB] & 18 & 1.804 & 1.784 & 7 & 11 & -0.008 & 0.3038 & 0.9114 & 1.0000 \\
Constraint-dual vs. bounds-dual [UB] & 17 & 1.499 & 3.104 & 16 & 1 & 1.646 & 0.0005 & 0.0030 & 0.0060 \\
Baseline vs. constraint-dual [UB] & 18 & 3.826 & 1.944 & 0 & 18 & -5.128 & $<$0.0001 & $<$0.0001 & $<$0.0001 \\
Baseline vs. bounds-dual [UB] & 17 & 2.719 & 3.104 & 6 & 11 & -0.016 & 0.6777 & 0.9114 & 1.0000 \\
\bottomrule
\end{tabular}
\end{adjustbox}
\end{table*}

Additional case-wise scatter plots and performance-profile views are reported in \appSuppSummaries{}.

\section{Which Initialization Blocks Matter?}
\label{sec:blocks}

\subsection{Combination Heatmap}

Figure~\ref{fig:oracle_heatmap} plots median speedup across all 15 non-empty subsets of $\{P_g, Q_g, V_m, V_a\}$ for the O-PO and O-PD families.

The primal-only panel (left) shows a gradual pattern: no combination has a large negative median speedup, and multi-block combinations containing both voltage blocks and at least one generator block tend to perform best. The full-vector primal-only restart reaches $+14.3\%$ as a fixed policy, while the case-wise best O-PO envelope reaches $21.6\%$ (Table~\ref{tab:global_summary}). Because IPOPT initializes multipliers internally in this family, partial primal replacement does not introduce an explicit reused-dual inconsistency.

The block-matched primal-plus-dual panel (right) shows a sharply different pattern. Twelve of the 14 partial combinations have negative median speedups. The worst median slowdown is $P_g$ alone at $-173.1\%$, followed by $V_m{+}V_a$ at $-126.3\%$, $V_m$ at $-93.9\%$, $Q_g{+}V_m$ at $-71.1\%$, and $Q_g{+}V_a$ at $-66.5\%$. Only two partial combinations are mildly positive in median terms: $P_g{+}V_m{+}V_a$ at $+6.1\%$ and $P_g{+}Q_g{+}V_m$ at $+3.2\%$. The full-vector restart remains distinct from the rest of the family at $+47.6\%$.

The convergence picture reinforces this distinction. Several of the most harmful O-PD combinations converge on only 10--12 of 18 baseline-convergent cases and fail repeatedly on medium and large networks. The two mildly positive partial combinations also have noticeably lower convergence rates than the full-vector restart. These patterns are consistent with the residual-mismatch mechanism discussed in \appProofs{}.

\begin{figure}[!t]
\centering
\IfFileExists{figs/fig_04_oracle_combo_heatmap_speedup.pdf}{%
  \includegraphics[width=0.98\columnwidth]{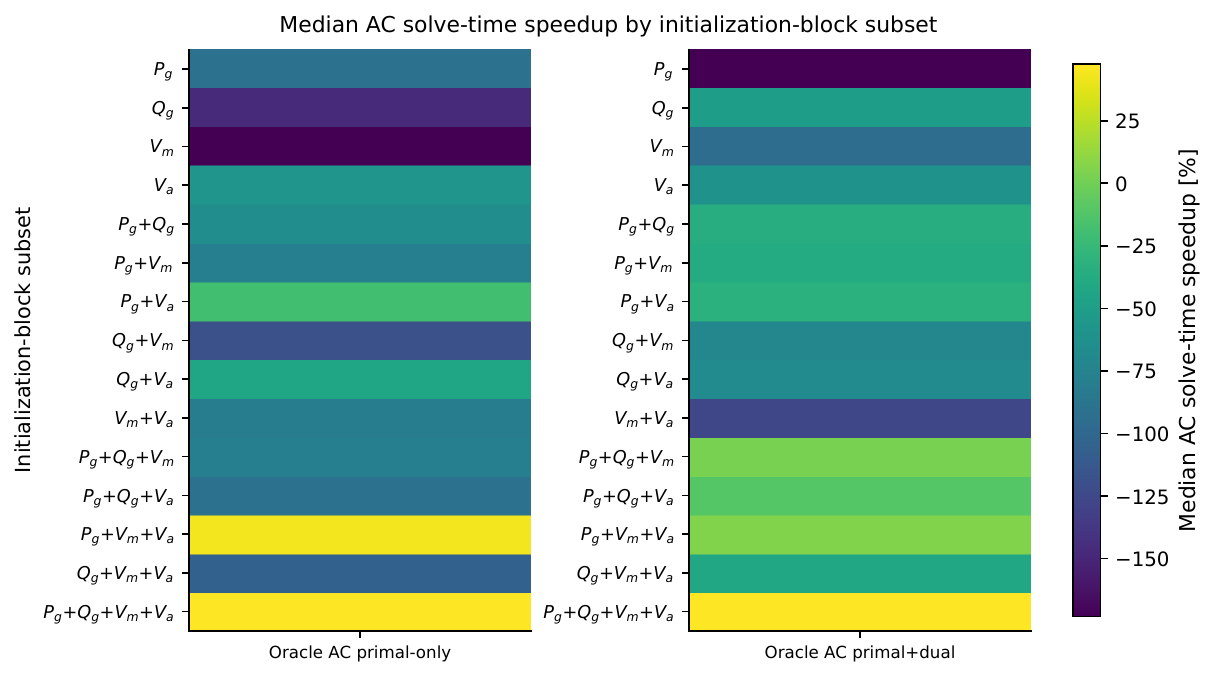}%
}{%
  \IfFileExists{figs/fig_04_oracle_combo_heatmap_speedup.png}{%
    \includegraphics[width=0.98\columnwidth]{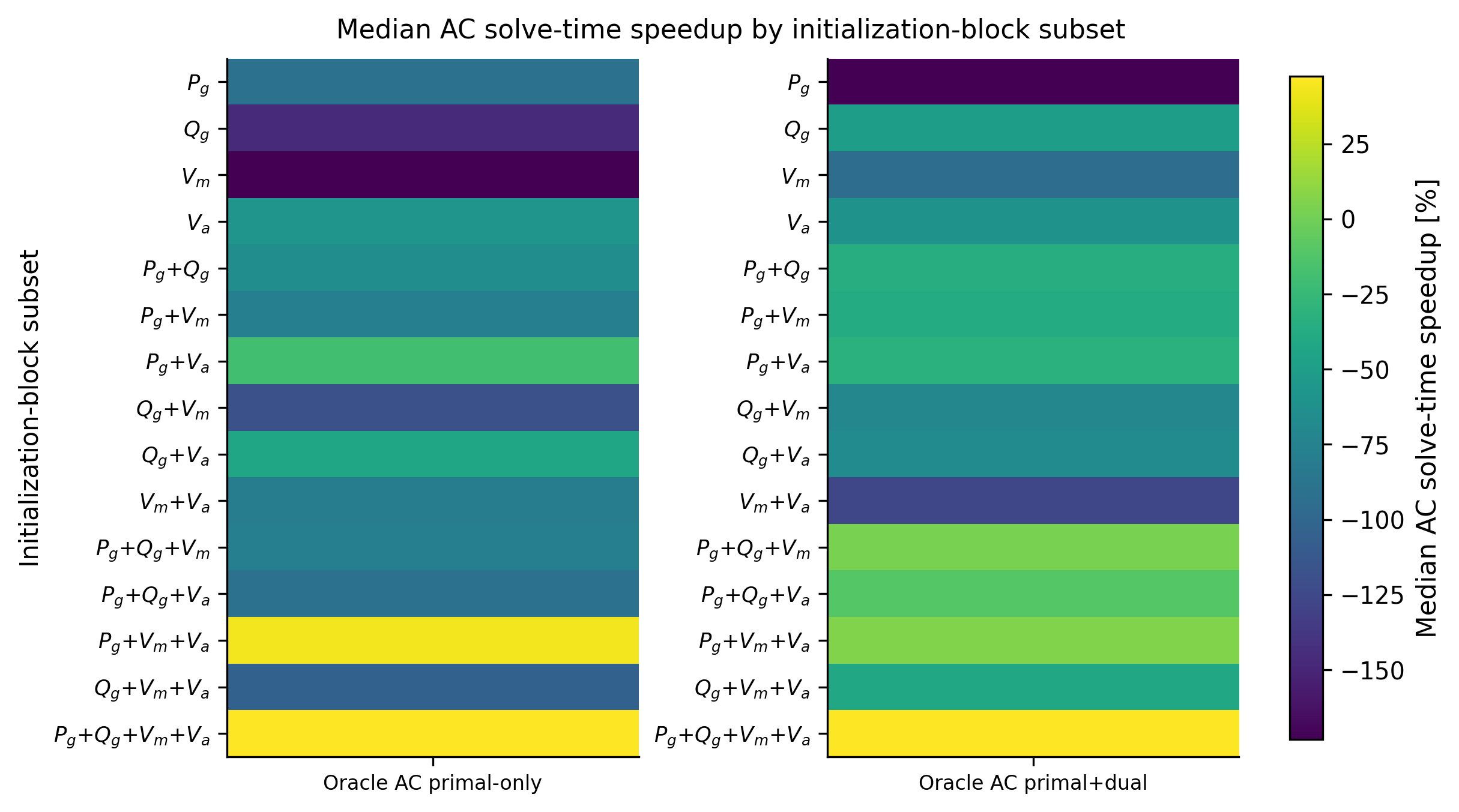}%
  }
}
\vspace{-1em}

\caption{Median AC solve-time speedup (\%) across all 15 non-empty initialization-block subsets for oracle primal-only (O-PO, left) and block-matched oracle primal-plus-dual (O-PD, right).}
\vspace{-1em}

\label{fig:oracle_heatmap}
\end{figure}

\subsection{Marginal Block Effects}

\appBlockDiagnostics{} reports marginal block summaries. While useful as secondary diagnostics, they conflate interaction effects, failure-induced case-set changes, and large-case leverage, so the combination heatmap remains the primary guide. The main point is unchanged: in the O-PO family, voltage blocks are the safest early candidates, whereas in the O-PD family the apparent marginal harm of some generator-side blocks largely reflects the dominance of harmful partial combinations rather than the value of those blocks in the full-vector restart.

\subsection{Practical Guidance and Implications for Learned Warm Starts}
\label{sec:blocks_discussion}
 
\textbf{Deployment guidance.} For O-PD restarts, use the complete $P_g{+}Q_g{+}V_m{+}V_a$ restart whenever possible (all 18 baseline-convergent cases converge; 16 within-family case wins). Partial block-matched O-PD restarts should be treated cautiously: most are harmful, and the two partial combinations with positive medians remain less reliable than the full-vector restart. If the full primal vector is not available, the \mbox{O-PD-AB} mode is more tolerant of partial coverage; several \mbox{O-PD-AB} combinations with two or three supplied blocks achieve positive medians (Section~\ref{sec:dual_decomp}). For O-PO, the family tolerates partial initialization better; combinations containing both voltage blocks ($V_m$, $V_a$) alongside at least one generator block tend to perform best.
 
The oracle results suggest four ceiling-level target priorities for learned warm starts. Since these come from same-instance exact-prediction experiments, they are ceilings, not promises about out-of-sample performance.
 
First, predicting all four primal blocks is the safest and most effective target. The fixed full-vector O-PO restart reaches a $14.3\%$ median speedup; the case-wise envelope pushes that to $21.6\%$.
 
When full coverage is not feasible because of model capacity, training data, or inference cost, voltage variables are the preferred partial target. Combinations that include both $V_m$ and $V_a$ achieve median speedups of $3.9$--$14.3\%$ in the O-PO family without a single convergence failure across the benchmark set (\appRankings{}; \appBlockDiagnostics{}). No other partial subset is as consistently safe.
 
Partial or inconsistent dual prediction without a nearly complete primal estimate is unreliable in these experiments. In the O-PD family, 12 of 14 partial configurations that supply dual variables alongside an incomplete primal vector produce negative median speedups, and several fail outright on larger cases. Unless the primal estimate already covers most of the decision vector and the predicted duals are mutually consistent with it, adding dual information tends to reduce performance.
 
If dual variables are predicted, the coverage pattern matters. The O-PD-AB results show that supplying the full bound-multiplier vector, rather than only the multipliers corresponding to the initialized primal blocks, gives higher convergence rates and better median speedups. Predicting a block-matched subset of bound multipliers supplies oracle multipliers for some bounds while leaving the rest at their default values, a combination inconsistent with the supplied primal point that perturbs the barrier complementarity conditions; the convergence penalty is visible across the benchmark.

\section{Dual Decomposition Analysis}
\label{sec:dual_decomp}

The four dual modes defined in Section~\ref{sec:oracle_pd} separate, imperfectly, the residual mechanisms discussed in \appProofs{}. Table~\ref{tab:dual_decomposition_summary} provides the aggregate view; Figure~\ref{fig:dual_heatmap} shows the combination-level detail. Because partial primal replacement also perturbs feasibility, the separation between stationarity and complementarity effects is only approximate.

\subsection{Aggregate Convergence and Speedup by Dual Mode}

The O-PO family (no duals) achieves the highest convergence rate ($98.5\%$ of 270 total runs across all 15 combinations and 18 cases) with a modest $+5.8\%$ median speedup. Introducing block-matched bound multipliers alongside constraint duals (O-PD) drops convergence to $70.4\%$ with a $-31.1\%$ median speedup. The O-PD-AB variant recovers substantially: $90.7\%$ convergence with a $+26.8\%$ median speedup, approaching the O-PO convergence rate while improving performance relative to O-PD.

The O-CD family achieves $80.4\%$ convergence with a $-13.6\%$ median speedup, while the O-BD family achieves $69.6\%$ convergence with a $-45.6\%$ median speedup. This ordering suggests that, in these experiments, restarts that reuse block-matched bound multipliers are more fragile than the constraint-dual-only restarts, although this comparison is not a clean causal isolation because the dual coverage differs across modes. The matched-case comparison is consistent with that interpretation: the case-wise best O-CD envelope is faster than the case-wise best O-BD envelope ($p = 0.0005$, Holm-corrected $p = 0.0030$; Table~\ref{tab:pairwise_tests}).

\begin{remark}[Complementarity mismatch under partial dual coverage]
\label{rem:dual_decomp}
The empirical comparison between O-BD and O-CD gives evidence on the relative importance of complementarity and stationarity channels, although partial primal replacement also affects feasibility and other residual components, so the decomposition is not clean. Bound multipliers interact directly with the barrier system through the complementarity conditions $(\tilde{x}_i - l_i)\,z_{L,i}^\star \approx \mu$ and $(u_i-\tilde{x}_i)\,z_{U,i}^\star \approx \mu$. When the supplied primal point is far from $x_i^\star$, these products deviate from the target barrier level and the solver must spend iterations re-centering. Constraint multipliers $(\lambda^\star, \mu_c^\star)$ primarily perturb stationarity and feasibility. The lower convergence rate and worse median speedup of O-BD relative to O-CD are therefore consistent with complementarity mismatch being one important source of harm, though not the only one.
\end{remark}

\subsection{Combination-Level Dual Decomposition Heatmap}

Figure~\ref{fig:dual_heatmap} extends the heatmap analysis to all five dual modes. The O-PO panel (leftmost) shows no harmful patterns. The \mbox{O-PD} panel concentrates most of the strongest negative medians in the study. The O-CD panel is less severe, with several partial combinations near break-even and the full vector achieving $+44.3\%$ in the case-wise best view. The O-BD panel has the largest median slowdowns among the decomposed dual modes, while the O-PD-AB panel is less failure-prone: many two-, three-, and four-block combinations have positive median speedups once the full bound-multiplier vector is supplied.

\begin{figure*}[!t]
\centering
\IfFileExists{figs/fig_13_dual_decomposition_heatmap.pdf}{%
  \includegraphics[width=0.98\textwidth]{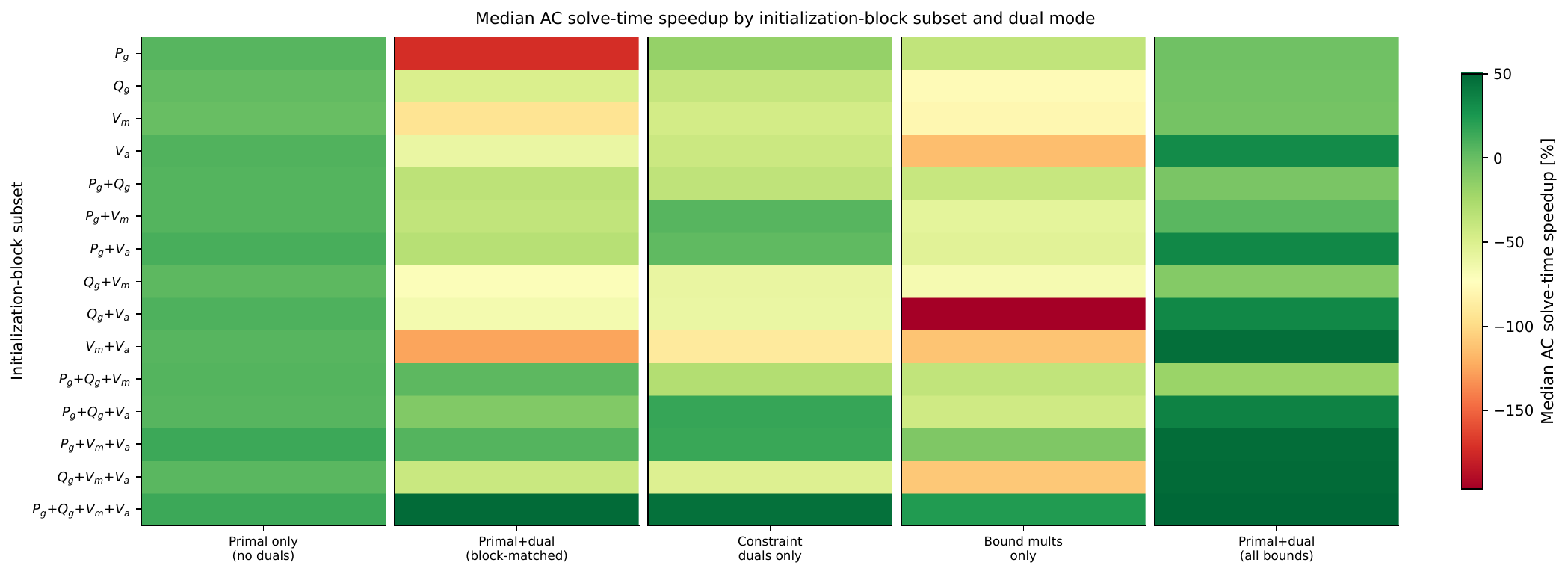}%
}{%
  \IfFileExists{figs/fig_13_dual_decomposition_heatmap.png}{%
    \includegraphics[width=0.98\textwidth]{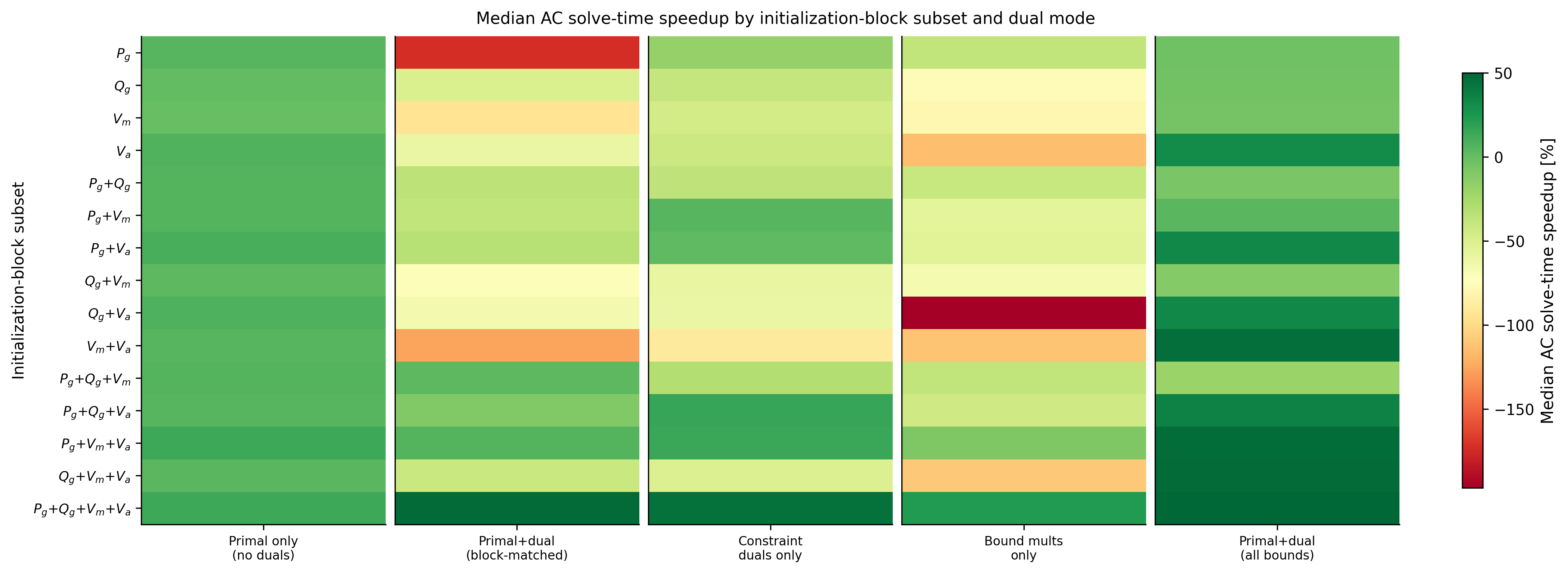}%
  }
}
\vspace{-1em}
\caption{Median AC solve-time speedup (\%) across all 15 initialization-block subsets for the five dual modes: O-PO, O-PD, O-CD, O-BD, and O-PD-AB.}
\label{fig:dual_heatmap}
\vspace{-1em}
\end{figure*}

\subsection{Dual-Only Control Experiments}

The dual-only ablations lead to the same conclusion. Constraint duals alone produce a $-39.2\%$ median slowdown (13/18 converged), bound multipliers alone produce a $-36.2\%$ median slowdown (14/18 converged), and the full dual vector without a better primal point yields only a small $+8.6\%$ median speedup with the same limited convergence rate. The bar chart is moved to \appDualDCScaling{}; Table~\ref{tab:dual_decomposition_summary} keeps the aggregate values in the main text.

\begin{table*}[!t]
\centering
\caption{Dual-mode family summary: convergence and median performance for the primal-only, primal-plus-dual, decomposed-dual, and dual-only benchmark families.}
\vspace{-1em}

\label{tab:dual_decomposition_summary}
\small
\setlength{\tabcolsep}{5pt}
\begin{adjustbox}{width=\textwidth}
\begin{tabular}{llrrrrr}
\toprule
Family & Dual mode & Total runs & Converged & Conv.\ rate [\%] & Median solve [s] & Median speedup [\%] \\
\midrule
Oracle AC primal-only & \texttt{none} & 270 & 266 & 98.5 & 3.896 & 5.8 \\
Oracle AC primal+dual & \texttt{full} & 270 & 190 & 70.4 & 0.766 & -31.1 \\
\midrule
Oracle AC constraint-dual-only & \texttt{constraint\_only} & 270 & 217 & 80.4 & 1.492 & -13.6 \\
Oracle AC bounds-dual-only & \texttt{bounds\_only} & 270 & 188 & 69.6 & 0.746 & -45.6 \\
Oracle AC primal+dual (all bounds) & \texttt{full\_all\_bounds} & 270 & 245 & 90.7 & 1.555 & 26.8 \\
\midrule
Oracle dual-only (constraint) & \texttt{constraint\_only} & 18 & 13 & 72.2 & 1.023 & -39.2 \\
Oracle dual-only (bounds) & \texttt{all\_bounds\_only} & 18 & 14 & 77.8 & 1.795 & -36.2 \\
Oracle dual-only (full) & \texttt{full\_all\_bounds} & 18 & 14 & 77.8 & 0.732 & 8.6 \\
\bottomrule
\end{tabular}
\end{adjustbox}
\end{table*}
\section{Practical DC-Seeded Methods}
\label{sec:dc_results}
 
The three DC-seeded combinations ($\{P_g\}$, $\{V_a\}$, $\{P_g, V_a\}$) all pay the same DCOPF presolve cost. They can reduce AC solve time, but Table~\ref{tab:pairwise_tests} shows that the one-shot end-to-end comparison against the baseline is not statistically significant after presolve cost is included. Thus, DC seeding is useful as physical information, but not a clear one-shot acceleration strategy in this benchmark. Combination-level DC rankings and E2E distributions are reported in \appDualDCScaling{}.

\section{Scalability and Large-Case Behavior}
\label{sec:large}
 
Table~\ref{tab:largest_cases} shows that the full-vector O-PD restart remains effective on the largest networks in the suite. On the 30,000-bus GOC instance, it cuts solve time from 165.4~s to 38.0~s, a 77\% speedup. The 6,468-bus RTE case illustrates the contrast with DC seeding: the best DC solve ($P_g{+}V_a$) takes 58.7~s versus a 45.8~s baseline, while the full-vector O-PD restart solves the same case in 8.5~s. Additional scaling plots are reported in \appDualDCScaling{}.

\begin{table*}[!t]
\centering
\caption{Warm-start performance on the eight largest baseline-convergent benchmark instances. Best oracle PD results use the full-vector $P_g{+}Q_g{+}V_m{+}V_a$ restart; ``Best DC solve blocks'' identifies the DC combination with the shortest AC solve time.}
\vspace{-1em}
\label{tab:largest_cases}
\scriptsize
\setlength{\tabcolsep}{4.0pt}
\begin{adjustbox}{width=\textwidth}
\begin{tabular}{lrrrrrrrr}
\toprule
Case & $n_b$ & $n_l$ & Baseline solve [s] & Best oracle PD [s] & Best DC solve blocks & Best DC solve [s] & Best DC E2E [s] & Best speedup (any method) [\%] \\
\midrule
\texttt{pglib\_opf\_case3022\_goc} & 3022 & 4135 & 8.975 & 4.601 & $V_a$ & 6.529 & 7.962 & 65.4 \\
\texttt{pglib\_opf\_case4020\_goc} & 4020 & 6988 & 15.534 & 4.883 & $V_a{+}P_g$ & 12.021 & 14.961 & 68.6 \\
\texttt{pglib\_opf\_case5658\_epigrids} & 5658 & 9078 & 14.982 & 8.028 & $V_a{+}P_g$ & 11.942 & 18.200 & 56.1 \\
\texttt{pglib\_opf\_case6468\_rte} & 6468 & 9000 & 45.822 & 8.511 & $V_a{+}P_g$ & 58.667\rlap{$^*$} & 61.965 & 85.7 \\
\texttt{pglib\_opf\_case7336\_epigrids} & 7336 & 11521 & 16.312 & 8.317 & $V_a{+}P_g$ & 15.204 & 20.844 & 49.0 \\
\texttt{pglib\_opf\_case10000\_goc} & 10000 & 13193 & 30.326 & 11.308 & $V_a$ & 27.365 & 34.370 & 63.1 \\
\texttt{pglib\_opf\_case13659\_pegase} & 13659 & 20467 & 45.014 & 17.136 & $V_a{+}P_g$ & 37.476 & 49.377 & 61.9 \\
\texttt{pglib\_opf\_case30000\_goc} & 30000 & 35393 & 165.383 & 38.037 & $V_a$ & 107.184 & 129.390 & 77.0 \\
\bottomrule
\multicolumn{9}{l}{\footnotesize $^*$ All three DC combinations are slower than the baseline on this case.}
\end{tabular}
\end{adjustbox}
\end{table*}

\section{Discussion}
\label{sec:discussion}

Our results show that the effect of a warm start depends strongly on which variables are initialized. Complete restarts can produce large speedups, but partial restarts often require additional iterations to repair feasibility and stationarity mismatches. That distinction explains why the same solver and the same problem class can exhibit both large speedups and large slowdowns under different warm-start choices.

The comparison between O-PD and O-PD-AB clarifies the role of bound-multiplier coverage. O-PD-AB has higher convergence rates for partial combinations. Several O-PD-AB restarts with two or three supplied blocks achieve positive median speedups, whereas the corresponding O-PD combinations have lower convergence rates and worse medians. These results suggest that giving IPOPT the full bound-multiplier vector is less disruptive than mixing oracle multipliers on some variables with default multipliers on others. That does not eliminate primal-dual inconsistency when omitted primal blocks remain at baseline, but it avoids mixing oracle bound multipliers on some variables with default bound-multiplier initialization on others.
 
For DC seeding, AC solve time and end-to-end time lead to different conclusions. DC information does sometimes help the AC solve. Whether it offsets the presolve cost in a one-shot workflow is a separate question, and the present evidence does not show a statistically significant end-to-end improvement. The raw matched-case solve-time comparison is only mildly favorable and does not survive multiple-testing correction, while the end-to-end comparison is not significant. In applications such as rolling-horizon operation, contingency analysis, or other repeated-solve settings, the relevant condition is whether the cumulative AC time saved over $k$ downstream solves exceeds the one-time DC presolve cost. That breakeven calculation lies outside the present one-shot benchmark, but it is the relevant comparison for deployment.

This study has several limitations. It uses one MATPOWER-style Pyomo/IPOPT/MUMPS stack on one Apple M4 platform, so timings and rankings may change with the formulation, hardware, linear solver, and LP/QP solver used for DCOPF presolve. The oracle experiments are same-instance second solves in one OS process; rebuilding the Pyomo model reduces model-level caching but cannot rule out smaller system-level second-solve advantages, so the reported oracle gains may slightly overstate the pure initialization effect. Timings are single-run wall-clock values; the dual decomposition is approximate because partial primal replacement also perturbs feasibility; and the DC study uses raw DCOPF outputs without AC lift, projection, or learned completion.

\section{Conclusion}
\label{sec:conclusion}

This paper benchmarked ACOPF warm starts by initialization content. Within the block-matched O-PD family, the full $P_g+Q_g+V_m+V_a$ restart is the only configuration that converges on all 18 eligible cases. Most partial block-matched primal-plus-dual restarts are harmful in median terms and fragile on large systems. The dual decomposition shows that partial bound-multiplier coverage is associated with lower robustness, whereas supplying the full bound-multiplier vector performs more reliably. DC seeding can help AC solve time, but its one-shot end-to-end benefit is not significant after presolve cost. For learned warm starts, the results suggest the following prediction priority: predict the full primal vector, emphasize voltage variables when partial coverage is necessary, and avoid partial or inconsistent dual prediction without a nearly complete primal estimate. These results define benchmark upper bounds; repeated-run timing and controlled re-solve experiments would refine the estimates.

\bibliographystyle{IEEEtran}
\bibliography{references}

\clearpage
\begingroup
\newgeometry{left=0.45cm,right=0.45cm,top=0.8cm,bottom=0.8cm}
\setlength{\tabcolsep}{1.2pt}
\renewcommand{\arraystretch}{0.82}
\fontsize{5.0}{5.4}\selectfont
\begin{landscape}
\begin{table}[p]
\centering
\caption{Complete warm-start results matrix. Each successful oracle cell reports AC solve time [s]\,/\,speedup [\%]\,/\,IPOPT iterations. DC rows follow the same convention. \textbf{F} = solver failure. Column headers: \textbf{5} = \texttt{pglib\_opf\_case5\_pjm}; \textbf{14} = \texttt{pglib\_opf\_case14\_ieee}; \textbf{39} = \texttt{pglib\_opf\_case39\_epri}; \textbf{57} = \texttt{pglib\_opf\_case57\_ieee}; \textbf{118} = \texttt{pglib\_opf\_case118\_ieee}; \textbf{200} = \texttt{pglib\_opf\_case200\_activ}; \textbf{300} = \texttt{pglib\_opf\_case300\_ieee}; \textbf{500} = \texttt{pglib\_opf\_case500\_ieee}; \textbf{1,354} = \texttt{pglib\_opf\_case1354\_pegase}; \textbf{2,000} = \texttt{pglib\_opf\_case2000\_goc}; \textbf{3,022} = \texttt{pglib\_opf\_case3022\_goc}; \textbf{4,020} = \texttt{pglib\_opf\_case4020\_goc}; \textbf{5,658} = \texttt{pglib\_opf\_case5658\_epigrids}; \textbf{6,468} = \texttt{pglib\_opf\_case6468\_rte}; \textbf{7,336} = \texttt{pglib\_opf\_case7336\_epigrids}; \textbf{8,387} = \texttt{pglib\_opf\_case8387\_pegase}; \textbf{10,000} = \texttt{pglib\_opf\_case10000\_goc}; \textbf{13,659} = \texttt{pglib\_opf\_case13659\_pegase}; \textbf{30,000} = \texttt{pglib\_opf\_case30000\_goc}.}
\label{tab:all_warmstarts_matrix}
\begin{adjustbox}{max width=\linewidth,max totalheight=0.90\textheight,keepaspectratio}
\begin{tabular}{p{1.55cm}p{2.00cm}rrrrrr*{19}{c}}
\toprule
Family & Combination & Cases & OK & MedSolve[s] & MedIter & MedE2E[s] & MedSpd[\%] & 5 & 14 & 39 & 57 & 118 & 200 & 300 & 500 & 1K & 2K & 3,022 & 4K & 5,658 & 6K & 7,336 & 8,387 & 10,000 & 14K & 30,000 \\
\midrule
\multicolumn{27}{l}{\textit{Baseline case-data start}} \\
Baseline & --- & 19 & 18 & 3.826 & 39 & 4.005 & 0.0 & 0.023/0/20 & 0.036/0/14 & 0.061/0/26 & 0.095/0/14 & 0.174/0/23 & 0.237/0/24 & 0.489/0/31 & 0.852/0/33 & 2.719/0/38 & 4.932/0/43 & 8.975/0/53 & 15.534/0/57 & 14.982/0/40 & 45.822/0/149 & 16.312/0/41 & \textbf{F} & 30.326/0/79 & 45.014/0/66 & 165.383/0/188 \\
\midrule
\multicolumn{27}{l}{\textit{Oracle AC primal-only (O-PO): no duals}} \\
O-PO & $V_a$ & 18 & 18 & 4.545 & 38 & 5.124 & 7.1 & 0.021/8/15 & 0.033/9/15 & 0.062/-2/30 & 0.073/24/14 & 0.336/-93/32 & 0.234/1/22 & 0.398/19/25 & 1.315/-54/60 & 2.485/9/33 & 6.605/-34/61 & 8.384/7/50 & 15.317/1/56 & 13.341/11/37 & 35.976/21/137 & 16.530/-1/40 &  & 26.693/12/65 & 161.675/-259/264 & 111.568/33/111 \\
O-PO & $V_m$ & 18 & 17 & 2.820 & 40 & 3.778 & -0.8 & 0.022/5/20 & 0.034/3/14 & 0.063/-4/27 & 0.070/27/11 & 1.121/-544/23 & 0.233/2/22 & 0.558/-14/51 & 0.859/-1/33 & 2.820/-4/40 & 4.711/4/40 & 13.944/-55/112 & 15.072/3/55 & 14.196/5/36 & 37.294/19/120 & 16.586/-2/40 &  & \textbf{F} & 126.866/-182/187 & 204.837/-24/212 \\
O-PO & $P_g$ & 18 & 18 & 4.369 & 37 & 4.942 & 4.9 & 0.022/5/18 & 0.031/13/14 & 0.055/10/23 & 0.068/29/11 & 0.371/-113/21 & 0.220/7/19 & 0.460/6/31 & 0.827/3/31 & 3.139/-15/42 & 5.600/-14/53 & 8.196/9/54 & 9.810/37/36 & 14.491/3/38 & 50.481/-10/208 & 15.609/4/39 &  & 29.012/4/77 & 41.202/8/60 & 175.244/-6/190 \\
O-PO & $Q_g$ & 18 & 18 & 4.306 & 40 & 5.260 & 1.0 & 0.020/14/20 & 0.030/16/14 & 0.062/-2/32 & 0.085/11/14 & 0.412/-136/24 & 0.236/1/23 & 0.425/13/29 & 0.852/0/33 & 3.724/-37/39 & 4.889/1/44 & 7.219/20/54 & 13.627/12/57 & 14.394/4/40 & 73.242/-60/276 & 16.119/1/41 &  & 31.480/-4/82 & 717.549/-1494/262 & 148.672/10/166 \\
O-PO & $V_a{+}V_m$ & 18 & 18 & 4.638 & 36 & 5.223 & 5.6 & 0.019/16/14 & 0.034/5/13 & 0.063/-4/29 & 0.075/21/13 & 0.186/-7/27 & 0.235/1/20 & 0.405/17/25 & 1.776/-108/36 & 4.068/-50/36 & 5.208/-6/44 & 6.764/25/50 & 13.267/15/55 & 14.963/0/37 & 15.574/66/47 & 15.792/3/37 &  & 27.230/10/62 & 42.269/6/61 & 100.819/39/104 \\
O-PO & $V_a{+}P_g$ & 18 & 18 & 4.282 & 32 & 4.881 & 9.7 & 0.021/7/15 & 0.029/19/13 & 0.069/-14/29 & 0.072/24/14 & 0.194/-12/26 & 0.215/9/16 & 0.392/20/25 & 1.003/-18/39 & 2.679/1/36 & 5.884/-19/52 & 8.074/10/43 & 9.274/40/31 & 11.639/22/32 & 33.451/27/120 & 14.771/9/33 &  & 56.554/-86/144 & 35.492/21/48 & 118.955/28/121 \\
O-PO & $V_a{+}Q_g$ & 18 & 18 & 4.561 & 38 & 5.132 & 8.1 & 0.017/25/14 & 0.028/20/14 & 0.059/3/26 & 0.079/17/13 & 0.193/-11/27 & 0.241/-2/23 & 0.407/17/25 & 2.129/-150/60 & 2.544/6/33 & 6.578/-33/61 & 8.093/10/49 & 13.956/10/55 & 12.623/16/37 & 57.152/-25/224 & 16.544/-1/40 &  & 27.305/10/63 & 143.383/-219/239 & 117.216/29/119 \\
O-PO & $V_m{+}P_g$ & 18 & 18 & 4.441 & 34 & 4.754 & 5.7 & 0.018/22/18 & 0.034/4/15 & 0.057/6/27 & 0.071/26/11 & 0.200/-15/22 & 0.223/6/19 & 0.461/6/28 & 0.800/6/29 & 4.493/-65/41 & 4.390/11/36 & 11.333/-26/83 & 9.460/39/33 & 12.201/19/34 & 71.894/-57/239 & 15.832/3/37 &  & 61.833/-104/169 & 42.100/6/60 & 176.229/-7/186 \\
O-PO & $V_m{+}Q_g$ & 18 & 16 & 1.837 & 37 & 1.975 & 2.9 & 0.021/7/19 & 0.029/17/15 & 0.062/-2/27 & 0.070/27/12 & 0.190/-9/23 & 0.237/0/23 & 0.544/-11/46 & 0.890/-4/35 & 2.785/-2/40 & 4.521/8/38 & 8.495/5/73 & 15.075/3/55 & 12.649/16/36 & 44.475/3/144 & 16.609/-2/40 &  & \textbf{F} & \textbf{F} & 130.996/21/137 \\
O-PO & $P_g{+}Q_g$ & 18 & 17 & 2.948 & 36 & 3.334 & 6.6 & 0.023/-1/18 & 0.028/21/15 & 0.056/7/23 & 0.081/15/13 & 0.280/-61/22 & 0.233/2/19 & 0.443/9/30 & 0.825/3/31 & 2.948/-8/42 & 5.110/-4/46 & 6.690/25/54 & 11.518/26/36 & 12.408/17/37 & \textbf{F} & 15.096/7/37 &  & 29.719/2/78 & 42.037/7/61 & 220.825/-34/246 \\
O-PO & $V_a{+}V_m{+}P_g$ & 18 & 18 & 3.572 & 30 & 4.165 & 14.0 & 0.022/5/13 & 0.030/15/13 & 0.049/19/18 & 0.316/-232/10 & 0.172/1/22 & 0.205/13/16 & 0.386/21/22 & 0.815/4/28 & 2.439/10/31 & 4.704/5/37 & 5.839/35/43 & 10.705/31/30 & 13.742/8/32 & 13.525/70/32 & 14.135/13/30 &  & 23.929/21/50 & 36.539/19/49 & 119.943/27/122 \\
O-PO & $V_a{+}V_m{+}Q_g$ & 18 & 18 & 3.806 & 36 & 4.286 & 3.9 & 0.018/20/13 & 0.031/14/13 & 0.060/1/28 & 0.185/-95/13 & 0.192/-10/28 & 0.224/6/20 & 0.412/16/21 & 0.908/-6/34 & 2.625/3/36 & 4.988/-1/40 & 8.096/10/50 & 15.641/-1/55 & 14.962/0/37 & 17.127/63/50 & 15.950/2/37 &  & 28.013/8/64 & 43.072/4/62 & 100.282/39/103 \\
O-PO & $V_a{+}P_g{+}Q_g$ & 18 & 18 & 4.151 & 32 & 4.626 & 4.9 & 0.022/3/15 & 0.037/-3/14 & 0.058/5/24 & 0.112/-17/14 & 0.189/-8/26 & 0.225/5/17 & 0.382/22/24 & 1.133/-33/48 & 2.587/5/35 & 5.715/-16/50 & 7.511/16/43 & 10.525/32/30 & 13.766/8/32 & 58.941/-29/244 & 14.661/10/32 &  & 61.225/-102/154 & 34.949/22/47 & 120.029/27/122 \\
O-PO & $V_m{+}P_g{+}Q_g$ & 18 & 18 & 3.735 & 34 & 4.393 & 5.7 & 0.019/16/19 & 0.034/4/15 & 0.059/4/27 & 0.078/18/11 & 0.174/0/21 & 0.219/8/19 & 0.411/16/27 & 0.925/-9/27 & 3.013/-11/42 & 4.457/10/37 & 8.785/2/75 & 9.545/39/33 & 13.316/11/34 & 55.425/-21/219 & 15.717/4/36 &  & 37.202/-23/93 & 41.575/8/59 & 142.885/14/153 \\
O-PO & $V_a{+}V_m{+}P_g{+}Q_g$ & 18 & 18 & 3.872 & 30 & 4.445 & 14.3 & 0.020/13/13 & 0.029/18/13 & 0.054/12/17 & 0.290/-205/11 & 0.160/8/18 & 0.205/14/15 & 0.428/12/15 & 0.804/6/27 & 3.666/-35/31 & 4.078/17/30 & 5.861/35/44 & 10.865/30/30 & 13.404/11/30 & 11.728/74/33 & 13.903/15/29 &  & 23.843/21/50 & 35.682/21/47 & 118.407/28/120 \\
\midrule
\multicolumn{27}{l}{\textit{Oracle AC primal+dual (O-PD): constraint duals + block-matched bound mults}} \\
O-PD & $V_a$ & 18 & 12 & 1.298 & 116 & 1.336 & -60.6 & 0.018/22/14 & 0.035/1/19 & 0.101/-66/80 & 0.107/-12/22 & 0.270/-55/63 & 0.296/-25/37 & 2.300/-371/242 & 2.883/-238/187 & 10.583/-289/224 & 13.946/-183/152 & \textbf{F} & \textbf{F} & 65.265/-336/268 & 60.007/-31/254 & \textbf{F} &  & \textbf{F} & \textbf{F} & \textbf{F} \\
O-PD & $V_m$ & 18 & 10 & 0.270 & 54 & 0.380 & -93.9 & 0.026/-15/39 & 0.032/10/10 & 0.078/-29/49 & 0.249/-161/55 & 0.252/-45/53 & 0.289/-22/38 & 4.080/-735/286 & 4.616/-442/185 & \textbf{F} & 12.001/-143/133 & \textbf{F} & \textbf{F} & 59.273/-296/249 & \textbf{F} & \textbf{F} &  & \textbf{F} & \textbf{F} & \textbf{F} \\
O-PD & $P_g$ & 18 & 13 & 1.260 & 58 & 1.361 & -173.1 & 0.022/2/23 & 0.031/13/16 & 0.080/-31/56 & 0.379/-298/44 & 0.232/-33/45 & 0.216/9/16 & 1.762/-261/226 & 1.260/-48/58 & 9.109/-235/179 & 16.890/-242/195 & \textbf{F} & 53.503/-244/180 & \textbf{F} & \textbf{F} & 52.879/-224/169 &  & \textbf{F} & 122.935/-173/200 & \textbf{F} \\
O-PD & $Q_g$ & 18 & 10 & 0.301 & 60 & 0.327 & -49.5 & 0.024/-4/34 & 0.039/-9/32 & 0.072/-19/49 & 0.112/-18/25 & 0.298/-71/72 & 0.304/-28/40 & 1.455/-198/187 & \textbf{F} & 15.650/-476/268 & 15.495/-214/190 & \textbf{F} & \textbf{F} & 63.332/-323/251 & \textbf{F} & \textbf{F} &  & \textbf{F} & \textbf{F} & \textbf{F} \\
O-PD & $V_a{+}V_m$ & 18 & 11 & 0.567 & 108 & 0.583 & -126.3 & 0.028/-22/29 & 0.033/7/9 & 0.070/-15/45 & 0.148/-55/33 & 0.292/-68/70 & 0.567/-139/108 & 1.106/-126/127 & 3.602/-323/222 & 14.086/-418/283 & 14.372/-191/150 & \textbf{F} & \textbf{F} & 61.359/-310/256 & \textbf{F} & \textbf{F} &  & \textbf{F} & \textbf{F} & \textbf{F} \\
O-PD & $V_a{+}P_g$ & 18 & 15 & 1.369 & 57 & 1.413 & -32.2 & 0.018/19/9 & 0.030/16/13 & 0.084/-38/57 & 0.201/-111/14 & 0.199/-14/33 & 0.216/9/16 & 1.369/-180/183 & 0.993/-17/41 & 5.104/-88/93 & 17.592/-257/190 & 7.487/17/65 & 20.537/-32/71 & 15.160/-1/37 & \textbf{F} & 26.425/-62/71 &  & \textbf{F} & 71.078/-58/103 & \textbf{F} \\
O-PD & $V_a{+}Q_g$ & 18 & 11 & 0.292 & 87 & 0.326 & -66.5 & 0.024/-3/26 & 0.032/11/14 & 0.098/-62/87 & 0.125/-32/30 & 0.290/-66/73 & 0.292/-23/38 & 2.355/-382/260 & 4.441/-421/175 & 15.731/-479/299 & 14.382/-192/180 & \textbf{F} & \textbf{F} & 61.633/-311/257 & \textbf{F} & \textbf{F} &  & \textbf{F} & \textbf{F} & \textbf{F} \\
O-PD & $V_m{+}P_g$ & 18 & 14 & 0.822 & 46 & 0.888 & -37.0 & 0.031/-34/49 & 0.029/20/8 & 0.068/-12/40 & 0.137/-44/13 & 0.198/-14/35 & 0.216/9/16 & 0.684/-40/73 & 0.960/-13/42 & 13.459/-395/248 & 7.461/-51/73 & 28.360/-216/273 & 67.218/-333/253 & 14.513/3/34 & \textbf{F} & 32.362/-98/94 &  & \textbf{F} & \textbf{F} & \textbf{F} \\
O-PD & $V_m{+}Q_g$ & 18 & 10 & 0.271 & 66 & 0.288 & -71.1 & 0.022/5/26 & 0.035/1/21 & 0.101/-66/82 & 0.168/-76/32 & 0.243/-40/51 & 0.300/-26/37 & 1.132/-132/135 & 3.011/-253/209 & \textbf{F} & 13.951/-183/171 & \textbf{F} & \textbf{F} & 55.697/-272/239 & \textbf{F} & \textbf{F} &  & \textbf{F} & \textbf{F} & \textbf{F} \\
O-PD & $P_g{+}Q_g$ & 18 & 13 & 0.614 & 58 & 0.682 & -35.6 & 0.027/-19/34 & 0.032/10/17 & 0.069/-14/39 & 0.193/-102/28 & 0.236/-36/47 & 0.209/12/15 & 0.614/-26/58 & 1.489/-75/73 & 12.180/-348/215 & 12.445/-152/141 & \textbf{F} & 49.377/-218/179 & \textbf{F} & 60.688/-32/235 & 64.296/-294/208 &  & \textbf{F} & \textbf{F} & \textbf{F} \\
O-PD & $V_a{+}V_m{+}P_g$ & 18 & 15 & 1.273 & 31 & 1.406 & 6.1 & 0.019/18/8 & 0.027/24/7 & 0.065/-6/38 & 0.074/22/7 & 0.171/2/23 & 0.223/6/16 & 1.273/-160/94 & 0.869/-2/28 & 4.259/-57/73 & 7.070/-43/66 & 6.682/26/54 & 11.900/23/31 & 12.299/18/24 & 36.131/21/113 & 23.735/-46/61 &  & \textbf{F} & \textbf{F} & \textbf{F} \\
O-PD & $V_a{+}V_m{+}Q_g$ & 18 & 10 & 0.281 & 58 & 0.291 & -40.7 & 0.020/13/23 & 0.031/13/6 & 0.077/-28/56 & 0.123/-29/34 & 0.265/-52/59 & 0.297/-25/40 & 1.008/-106/101 & 3.274/-284/223 & \textbf{F} & 19.502/-295/236 & \textbf{F} & \textbf{F} & 58.597/-291/244 & \textbf{F} & \textbf{F} &  & \textbf{F} & \textbf{F} & \textbf{F} \\
O-PD & $V_a{+}P_g{+}Q_g$ & 18 & 14 & 1.983 & 50 & 2.054 & -10.5 & 0.016/30/7 & 0.029/20/12 & 0.051/16/16 & 0.080/16/10 & 0.184/-5/27 & 0.190/20/11 & 2.932/-500/229 & 1.035/-21/47 & 7.548/-178/109 & 6.906/-40/70 & 10.365/-15/72 & 15.744/-1/57 & 19.755/-32/52 & \textbf{F} & 28.380/-74/77 &  & \textbf{F} & \textbf{F} & \textbf{F} \\
O-PD & $V_m{+}P_g{+}Q_g$ & 18 & 14 & 0.753 & 27 & 0.806 & 3.2 & 0.017/24/13 & 0.027/25/9 & 0.054/11/22 & 0.090/6/9 & 0.159/9/19 & 0.179/25/6 & 0.659/-35/70 & 0.848/1/32 & 13.303/-389/242 & 7.145/-45/64 & 30.043/-235/266 & \textbf{F} & 10.698/29/17 & \textbf{F} & 24.931/-53/65 &  & \textbf{F} & 169.398/-276/283 & \textbf{F} \\
O-PD & $V_a{+}V_m{+}P_g{+}Q_g$ & 18 & 18 & 1.804 & 4 & 2.411 & 47.6 & 0.019/19/2 & 0.027/24/2 & 0.040/34/2 & 0.071/25/2 & 0.119/32/2 & 0.167/30/2 & 0.272/44/2 & 0.486/43/3 & 1.337/51/4 & 2.270/54/3 & 4.601/49/9 & 4.883/69/4 & 8.028/46/4 & 8.511/81/4 & 8.317/49/4 &  & 11.308/63/8 & 17.136/62/5 & 38.037/77/12 \\
\midrule
\multicolumn{27}{l}{\textit{Oracle AC constraint-dual (O-CD): constraint duals only, no bound mults}} \\
O-CD & $V_a$ & 18 & 14 & 1.339 & 89 & 1.411 & -41.9 & 0.017/24/11 & 0.033/7/11 & 0.070/-15/45 & 0.088/8/28 & 0.194/-11/34 & 0.234/1/21 & 1.243/-154/148 & 1.436/-68/80 & 6.723/-147/98 & 10.001/-103/110 & \textbf{F} & 48.125/-210/202 & 28.366/-89/104 & 43.258/6/161 & 76.200/-367/228 &  & \textbf{F} & \textbf{F} & \textbf{F} \\
O-CD & $V_m$ & 18 & 14 & 1.441 & 81 & 1.514 & -45.9 & 0.023/-1/26 & 0.033/8/13 & 0.079/-30/50 & 0.086/10/20 & 0.208/-20/38 & 0.223/6/20 & 1.289/-164/164 & 1.593/-87/88 & 7.673/-182/153 & 7.334/-49/74 & \textbf{F} & 70.327/-353/287 & 31.711/-112/115 & 65.605/-43/242 & 49.123/-201/159 &  & \textbf{F} & \textbf{F} & \textbf{F} \\
O-CD & $P_g$ & 18 & 15 & 1.635 & 42 & 1.679 & -17.4 & 0.027/-17/17 & 0.034/3/14 & 0.066/-9/36 & 0.086/10/15 & 0.208/-19/36 & 0.197/17/11 & 1.298/-166/168 & 1.635/-92/42 & 9.919/-265/157 & 5.370/-9/46 & 26.825/-199/265 & 16.022/-3/55 & 13.932/7/29 & \textbf{F} & 19.238/-18/45 &  & 81.437/-169/237 & \textbf{F} & \textbf{F} \\
O-CD & $Q_g$ & 18 & 13 & 0.612 & 63 & 0.665 & -39.1 & 0.025/-8/25 & 0.029/18/14 & 0.065/-8/29 & 0.105/-10/23 & 0.242/-39/49 & 0.237/0/22 & 0.612/-25/63 & 2.782/-226/108 & 11.728/-331/196 & 7.545/-53/81 & \textbf{F} & 61.140/-294/261 & 29.645/-98/108 & \textbf{F} & 45.044/-176/143 &  & \textbf{F} & \textbf{F} & \textbf{F} \\
O-CD & $V_a{+}V_m$ & 18 & 13 & 0.925 & 90 & 0.964 & -89.3 & 0.030/-29/13 & 0.029/19/10 & 0.100/-64/83 & 0.077/20/16 & 0.256/-47/57 & 0.429/-81/70 & 0.925/-89/90 & 1.979/-132/93 & 6.473/-138/126 & 12.135/-146/119 & \textbf{F} & 68.701/-342/280 & 31.916/-113/118 & \textbf{F} & 71.936/-341/224 &  & \textbf{F} & \textbf{F} & \textbf{F} \\
O-CD & $V_a{+}P_g$ & 18 & 17 & 3.013 & 33 & 3.246 & 2.7 & 0.017/24/10 & 0.033/6/11 & 0.067/-10/36 & 0.082/14/15 & 0.163/6/18 & 0.187/21/11 & 1.837/-276/183 & 0.779/9/24 & 3.013/-11/44 & 5.253/-7/43 & 7.442/17/33 & 22.874/-47/95 & 12.739/15/21 & 63.046/-38/253 & 15.865/3/33 &  & 80.306/-165/234 & 105.372/-134/166 & \textbf{F} \\
O-CD & $V_a{+}Q_g$ & 18 & 12 & 0.909 & 72 & 0.965 & -59.9 & 0.024/-5/9 & 0.029/17/10 & 0.073/-19/50 & 0.086/9/27 & 0.284/-63/62 & 0.251/-6/26 & 1.942/-297/113 & 1.533/-80/86 & 8.924/-228/151 & 7.730/-57/83 & \textbf{F} & 43.482/-180/204 & 30.015/-100/106 & \textbf{F} & \textbf{F} &  & \textbf{F} & \textbf{F} & \textbf{F} \\
O-CD & $V_m{+}P_g$ & 18 & 14 & 0.989 & 24 & 1.057 & 5.1 & 0.019/18/20 & 0.034/4/11 & 0.056/8/22 & 0.073/23/13 & 0.163/6/20 & 0.193/19/11 & 1.242/-154/151 & 0.737/14/24 & 8.355/-207/148 & 5.866/-19/53 & 17.721/-97/159 & 20.020/-29/75 & 12.621/16/23 & \textbf{F} & 21.419/-31/52 &  & \textbf{F} & \textbf{F} & \textbf{F} \\
O-CD & $V_m{+}Q_g$ & 18 & 14 & 1.543 & 85 & 1.598 & -59.6 & 0.026/-13/25 & 0.035/2/16 & 0.088/-45/62 & 0.111/-16/41 & 0.226/-30/44 & 0.224/6/21 & 1.492/-205/168 & 1.593/-87/93 & 11.154/-310/192 & 7.257/-47/77 & \textbf{F} & 64.782/-317/276 & 31.016/-107/115 & 78.844/-72/300 & 50.658/-211/164 &  & \textbf{F} & \textbf{F} & \textbf{F} \\
O-CD & $P_g{+}Q_g$ & 18 & 14 & 1.265 & 50 & 1.325 & -36.0 & 0.024/-3/32 & 0.034/4/15 & 0.070/-16/42 & 0.082/14/19 & 0.242/-39/46 & 0.186/22/9 & 1.377/-182/180 & 1.152/-35/54 & 8.003/-194/155 & 7.237/-47/76 & 29.180/-225/295 & 21.238/-37/81 & 17.541/-17/34 & \textbf{F} & 27.433/-68/76 &  & \textbf{F} & \textbf{F} & \textbf{F} \\
O-CD & $V_a{+}V_m{+}P_g$ & 18 & 16 & 1.756 & 20 & 2.007 & 15.4 & 0.018/23/11 & 0.031/14/7 & 0.054/11/21 & 0.068/28/9 & 0.145/17/12 & 0.204/14/10 & 0.640/-31/67 & 0.642/25/15 & 2.869/-6/41 & 5.748/-17/52 & 5.610/37/41 & 8.499/45/19 & 10.915/27/17 & 21.562/53/61 & 15.125/7/29 &  & 84.262/-178/213 & \textbf{F} & \textbf{F} \\
O-CD & $V_a{+}V_m{+}Q_g$ & 18 & 12 & 0.495 & 73 & 0.519 & -51.6 & 0.019/15/8 & 0.031/13/7 & 0.091/-50/69 & 0.077/19/16 & 0.242/-39/52 & 0.243/-2/25 & 0.746/-53/77 & 1.764/-107/102 & \textbf{F} & 11.833/-140/135 & \textbf{F} & 70.760/-356/276 & 31.788/-112/117 & \textbf{F} & 63.945/-292/201 &  & \textbf{F} & \textbf{F} & \textbf{F} \\
O-CD & $V_a{+}P_g{+}Q_g$ & 18 & 15 & 1.236 & 33 & 1.280 & 15.9 & 0.017/28/6 & 0.030/16/10 & 0.048/20/16 & 0.075/21/16 & 0.171/2/23 & 0.187/21/9 & 1.236/-153/153 & 0.968/-14/33 & 4.347/-60/74 & 4.909/0/42 & 5.494/39/37 & 10.136/35/34 & 12.305/18/24 & 43.242/6/163 & 18.733/-15/43 &  & \textbf{F} & \textbf{F} & \textbf{F} \\
O-CD & $V_m{+}P_g{+}Q_g$ & 18 & 16 & 3.724 & 42 & 4.200 & -30.4 & 0.033/-44/35 & 0.031/14/15 & 0.058/5/28 & 0.079/17/14 & 0.183/-5/27 & 0.173/27/7 & 1.983/-306/262 & 1.495/-75/29 & 9.579/-252/161 & 5.465/-11/50 & 18.987/-112/182 & 19.614/-26/71 & 10.501/30/23 & 65.695/-43/242 & 21.946/-35/55 &  & 64.367/-112/184 & \textbf{F} & \textbf{F} \\
O-CD & $V_a{+}V_m{+}P_g{+}Q_g$ & 18 & 18 & 1.944 & 6 & 2.537 & 44.3 & 0.014/37/3 & 0.028/21/3 & 0.043/29/4 & 0.066/30/4 & 0.124/29/4 & 0.167/30/3 & 0.285/42/6 & 0.479/44/4 & 1.499/45/8 & 2.389/52/5 & 5.112/43/12 & 5.285/66/6 & 6.703/55/5 & 8.342/82/11 & 8.972/45/6 &  & 11.432/62/9 & 18.378/59/8 & 41.639/75/16 \\
\midrule
\multicolumn{27}{l}{\textit{Oracle AC bounds-dual (O-BD): block-matched bound mults only, no constraint duals}} \\
O-BD & $V_a$ & 18 & 11 & 0.373 & 104 & 0.397 & -114.4 & 0.034/-47/48 & 0.037/-3/24 & 0.087/-43/67 & 0.087/8/30 & 0.373/-114/104 & 0.328/-38/52 & 1.970/-303/262 & 4.527/-431/240 & 13.094/-382/253 & 17.747/-260/235 & \textbf{F} & \textbf{F} & 39.683/-165/171 & \textbf{F} & \textbf{F} &  & \textbf{F} & \textbf{F} & \textbf{F} \\
O-BD & $V_m$ & 18 & 11 & 0.301 & 99 & 0.332 & -79.3 & 0.043/-89/99 & 0.032/10/17 & 0.095/-57/79 & 0.089/6/28 & 0.277/-59/66 & 0.301/-27/42 & 0.876/-79/112 & 3.093/-263/209 & \textbf{F} & 13.033/-164/165 & \textbf{F} & \textbf{F} & 30.081/-101/127 & \textbf{F} & 67.121/-311/250 &  & \textbf{F} & \textbf{F} & \textbf{F} \\
O-BD & $P_g$ & 18 & 13 & 1.521 & 74 & 1.593 & -37.0 & 0.023/1/36 & 0.031/12/13 & 0.094/-55/79 & 0.111/-17/50 & 0.242/-39/52 & 0.229/4/18 & 1.927/-294/217 & 1.521/-78/83 & 12.585/-363/258 & 6.758/-37/74 & \textbf{F} & 18.436/-19/77 & 16.410/-10/49 & \textbf{F} & 38.776/-138/131 &  & \textbf{F} & \textbf{F} & \textbf{F} \\
O-BD & $Q_g$ & 18 & 10 & 0.323 & 83 & 0.346 & -76.3 & 0.028/-20/45 & 0.034/4/22 & 0.085/-39/68 & 0.117/-23/51 & 0.371/-113/98 & 0.274/-15/36 & 1.240/-154/118 & 3.007/-253/206 & \textbf{F} & 12.514/-154/163 & \textbf{F} & \textbf{F} & 45.060/-201/194 & \textbf{F} & \textbf{F} &  & \textbf{F} & \textbf{F} & \textbf{F} \\
O-BD & $V_a{+}V_m$ & 18 & 13 & 0.752 & 97 & 0.800 & -111.1 & 0.019/19/16 & 0.038/-6/29 & 0.095/-56/76 & 0.087/9/27 & 0.368/-111/60 & 0.536/-126/97 & 0.752/-54/88 & 4.983/-485/222 & 13.654/-402/300 & 16.636/-237/213 & \textbf{F} & 46.884/-202/247 & 31.507/-110/128 & \textbf{F} & 71.460/-338/268 &  & \textbf{F} & \textbf{F} & \textbf{F} \\
O-BD & $V_a{+}P_g$ & 18 & 14 & 1.611 & 75 & 1.670 & -54.1 & 0.028/-20/45 & 0.033/6/21 & 0.067/-11/42 & 0.084/12/24 & 0.740/-325/56 & 0.232/2/22 & 1.937/-296/272 & 1.284/-51/69 & 9.995/-268/212 & 7.774/-58/90 & 21.219/-136/214 & 17.958/-16/81 & 26.525/-77/100 & \textbf{F} & 52.350/-221/189 &  & \textbf{F} & \textbf{F} & \textbf{F} \\
O-BD & $V_a{+}Q_g$ & 18 & 11 & 0.517 & 72 & 0.552 & -196.8 & 0.026/-13/27 & 0.035/2/21 & 0.069/-14/44 & 0.086/10/25 & 0.517/-197/72 & 0.330/-39/52 & 1.602/-228/219 & 3.135/-268/217 & 11.634/-328/250 & 22.378/-354/293 & \textbf{F} & \textbf{F} & 45.190/-202/199 & \textbf{F} & \textbf{F} &  & \textbf{F} & \textbf{F} & \textbf{F} \\
O-BD & $V_m{+}P_g$ & 18 & 12 & 0.715 & 74 & 0.765 & -56.7 & 0.042/-82/99 & 0.035/3/15 & 0.123/-103/110 & 0.106/-11/45 & 0.316/-82/71 & 0.239/-1/25 & 1.173/-140/125 & 1.113/-31/53 & \textbf{F} & 7.023/-42/76 & \textbf{F} & 26.553/-71/111 & 16.726/-12/49 & \textbf{F} & 39.488/-142/135 &  & \textbf{F} & \textbf{F} & \textbf{F} \\
O-BD & $V_m{+}Q_g$ & 18 & 11 & 0.726 & 84 & 0.770 & -67.5 & 0.032/-41/58 & 0.030/15/14 & 0.102/-68/84 & 0.095/1/39 & 1.110/-538/91 & 0.280/-18/37 & 0.726/-49/80 & 2.853/-235/194 & \textbf{F} & 13.696/-178/169 & \textbf{F} & \textbf{F} & 31.980/-113/131 & \textbf{F} & 62.661/-284/231 &  & \textbf{F} & \textbf{F} & \textbf{F} \\
O-BD & $P_g{+}Q_g$ & 18 & 13 & 1.191 & 62 & 1.294 & -39.8 & 0.032/-39/62 & 0.032/10/15 & 0.095/-56/81 & 0.088/8/27 & 0.238/-36/51 & 0.223/6/20 & 2.599/-432/128 & 1.191/-40/61 & 10.647/-292/219 & 9.462/-92/103 & \textbf{F} & 38.911/-150/216 & 16.121/-8/46 & \textbf{F} & 42.320/-159/146 &  & \textbf{F} & \textbf{F} & \textbf{F} \\
O-BD & $V_a{+}V_m{+}P_g$ & 18 & 16 & 2.814 & 39 & 3.048 & -9.4 & 0.019/16/17 & 0.032/11/18 & 0.073/-20/44 & 0.074/23/17 & 0.214/-23/30 & 0.225/5/20 & 0.659/-35/70 & 0.838/2/34 & 4.790/-76/86 & 7.542/-53/84 & 14.165/-58/130 & 9.555/38/33 & 12.750/15/27 & 28.452/38/113 & 26.432/-62/80 &  & \textbf{F} & 134.318/-198/241 & \textbf{F} \\
O-BD & $V_a{+}V_m{+}Q_g$ & 18 & 11 & 0.618 & 93 & 0.633 & -109.3 & 0.018/21/12 & 0.033/7/18 & 0.066/-8/40 & 0.080/16/21 & 0.416/-139/93 & 0.618/-161/130 & 0.703/-44/75 & \textbf{F} & 12.284/-352/267 & 15.821/-221/204 & \textbf{F} & \textbf{F} & 31.363/-109/126 & \textbf{F} & 73.685/-352/277 &  & \textbf{F} & \textbf{F} & \textbf{F} \\
O-BD & $V_a{+}P_g{+}Q_g$ & 18 & 14 & 1.565 & 66 & 1.611 & -44.1 & 0.028/-24/38 & 0.037/-5/25 & 0.081/-34/57 & 0.088/8/26 & 0.267/-53/45 & 0.206/13/17 & 1.595/-226/224 & 1.535/-80/91 & 9.367/-244/198 & 6.666/-35/73 & 26.283/-193/268 & 13.486/13/60 & 24.727/-65/89 & \textbf{F} & 61.888/-279/228 &  & \textbf{F} & \textbf{F} & \textbf{F} \\
O-BD & $V_m{+}P_g{+}Q_g$ & 18 & 12 & 0.645 & 62 & 0.681 & -36.8 & 0.026/-12/45 & 0.034/5/12 & 0.079/-30/61 & 0.086/10/28 & 0.335/-92/53 & 0.215/10/16 & 0.956/-96/124 & 1.455/-71/65 & \textbf{F} & 6.969/-41/69 & \textbf{F} & 37.278/-140/167 & 19.822/-32/63 & \textbf{F} & 29.764/-82/92 &  & \textbf{F} & \textbf{F} & \textbf{F} \\
O-BD & $V_a{+}V_m{+}P_g{+}Q_g$ & 18 & 16 & 1.905 & 16 & 2.361 & 22.8 & 0.016/28/9 & 0.029/20/5 & 0.045/26/9 & 0.069/28/6 & 0.181/-4/22 & 0.197/17/9 & 0.326/33/15 & 0.706/17/22 & 5.324/-96/90 & 3.104/37/17 & 13.269/-48/109 & 4.929/68/5 & 9.756/35/11 & 9.721/79/19 & 17.340/-6/35 &  & 62.732/-107/175 & \textbf{F} & \textbf{F} \\
\midrule
\multicolumn{27}{l}{\textit{Oracle AC primal+dual all-bounds (O-PD-AB): constraint duals + ALL bound mults}} \\
O-PD-AB & $V_a$ & 18 & 17 & 1.589 & 7 & 1.990 & 31.4 & 0.023/1/6 & 0.034/5/7 & 0.043/29/5 & 0.065/31/6 & 0.131/25/5 & 0.166/30/4 & 0.620/-27/6 & 0.579/32/8 & 1.589/42/10 & 2.446/50/6 & 5.118/43/14 & 5.573/64/8 & 8.967/40/7 & 47.540/-4/191 & 10.587/35/13 &  & 25.781/15/47 & \textbf{F} & 94.997/43/75 \\
O-PD-AB & $V_m$ & 18 & 16 & 2.768 & 19 & 3.011 & -5.1 & 0.027/-16/43 & 0.034/6/6 & 0.076/-24/16 & 0.073/24/12 & 0.161/7/19 & 0.180/24/5 & 2.075/-325/63 & 0.753/12/18 & 8.541/-214/177 & 3.461/30/19 & 24.884/-177/243 & 9.798/37/16 & 11.494/23/19 & 61.624/-34/255 & 18.901/-16/44 &  & 83.035/-174/246 & \textbf{F} & \textbf{F} \\
O-PD-AB & $P_g$ & 18 & 17 & 3.215 & 30 & 3.985 & -3.2 & 0.024/-3/20 & 0.030/17/7 & 0.062/-1/35 & 0.065/32/8 & 0.169/3/18 & 0.171/28/5 & 0.624/-28/54 & 1.060/-24/28 & 13.108/-382/263 & 3.215/35/17 & 27.106/-202/277 & 16.828/-8/50 & 8.212/45/12 & 65.523/-43/245 & 15.223/7/30 &  & 89.731/-196/250 & 145.344/-223/239 & \textbf{F} \\
O-PD-AB & $Q_g$ & 18 & 14 & 1.233 & 35 & 1.402 & -4.2 & 0.023/-1/16 & 0.034/4/6 & 0.094/-54/37 & 0.065/32/7 & 0.221/-27/33 & 0.179/25/7 & 0.719/-47/39 & 1.746/-105/37 & 7.026/-158/135 & 4.823/2/39 & 25.911/-189/260 & 9.958/36/20 & 8.216/45/11 & \textbf{F} & 17.599/-8/39 &  & \textbf{F} & \textbf{F} & \textbf{F} \\
O-PD-AB & $V_a{+}V_m$ & 18 & 17 & 1.426 & 4 & 1.829 & 45.4 & 0.017/28/3 & 0.029/19/3 & 0.174/-186/3 & 0.066/31/3 & 0.127/27/3 & 0.169/29/3 & 0.267/45/3 & 0.485/43/4 & 1.426/48/6 & 2.331/53/4 & 3.269/64/10 & 6.318/59/4 & 8.476/43/5 & 6.813/85/6 & 8.482/48/5 &  & 13.954/54/10 & \textbf{F} & 50.004/70/24 \\
O-PD-AB & $V_a{+}P_g$ & 18 & 18 & 2.001 & 8 & 2.590 & 32.8 & 0.017/27/6 & 0.028/22/7 & 0.045/26/5 & 0.070/27/6 & 0.127/27/5 & 0.172/28/4 & 0.298/39/6 & 0.534/37/8 & 1.555/43/10 & 2.446/50/5 & 3.566/60/14 & 6.900/56/10 & 7.766/48/9 & 43.930/4/183 & 11.707/28/17 &  & 18.168/40/26 & 159.252/-254/259 & 57.566/65/34 \\
O-PD-AB & $V_a{+}Q_g$ & 18 & 17 & 1.644 & 7 & 2.059 & 33.4 & 0.015/33/5 & 0.030/16/5 & 0.111/-83/7 & 0.065/32/5 & 0.128/27/5 & 0.170/28/4 & 0.290/41/7 & 0.557/35/10 & 1.644/40/12 & 2.718/45/10 & 3.685/59/16 & 5.673/63/7 & 8.853/41/6 & 46.133/-1/194 & 12.934/21/21 &  & 43.541/-44/96 & \textbf{F} & 82.818/50/58 \\
O-PD-AB & $V_m{+}P_g$ & 18 & 15 & 1.148 & 21 & 1.197 & 4.7 & 0.020/14/20 & 0.028/20/8 & 0.053/13/13 & 0.066/31/7 & 0.166/5/19 & 0.174/27/5 & 0.722/-48/64 & 1.148/-35/53 & 9.134/-236/190 & 3.520/29/21 & 25.564/-185/245 & 29.422/-89/98 & 11.793/21/20 & \textbf{F} & 19.814/-21/47 &  & 101.736/-235/300 & \textbf{F} & \textbf{F} \\
O-PD-AB & $V_m{+}Q_g$ & 18 & 15 & 0.792 & 35 & 0.869 & -11.1 & 0.027/-18/41 & 0.032/11/7 & 0.189/-211/35 & 0.073/24/12 & 0.163/6/18 & 0.177/26/6 & 0.543/-11/47 & 0.792/7/26 & 9.260/-241/194 & 5.682/-15/49 & 26.798/-199/260 & 8.380/46/14 & 10.118/32/18 & 74.129/-62/276 & 24.764/-52/65 &  & \textbf{F} & \textbf{F} & \textbf{F} \\
O-PD-AB & $P_g{+}Q_g$ & 18 & 14 & 0.874 & 32 & 0.990 & -7.3 & 0.027/-17/22 & 0.029/17/8 & 0.060/2/28 & 0.065/32/8 & 0.146/16/13 & 0.184/23/8 & 0.742/-52/36 & 1.005/-18/42 & 7.352/-170/143 & 4.782/3/40 & \textbf{F} & 61.055/-293/184 & 10.198/32/12 & \textbf{F} & 25.630/-57/64 &  & 79.677/-163/234 & \textbf{F} & \textbf{F} \\
O-PD-AB & $V_a{+}V_m{+}P_g$ & 18 & 18 & 1.836 & 4 & 2.265 & 46.8 & 0.020/14/3 & 0.027/24/2 & 0.068/-13/3 & 0.064/33/3 & 0.128/26/3 & 0.168/29/3 & 0.265/46/3 & 0.479/44/4 & 1.345/51/5 & 2.326/53/4 & 4.135/54/10 & 6.623/57/4 & 7.584/49/5 & 6.824/85/6 & 8.526/48/5 &  & 13.564/55/9 & 151.529/-237/240 & 40.842/75/15 \\
O-PD-AB & $V_a{+}V_m{+}Q_g$ & 18 & 18 & 1.822 & 4 & 2.428 & 47.3 & 0.017/24/3 & 0.028/22/3 & 0.066/-9/3 & 0.062/35/3 & 0.134/23/3 & 0.162/32/3 & 0.263/46/3 & 0.488/43/3 & 1.302/52/4 & 2.341/53/4 & 3.217/64/9 & 4.952/68/4 & 8.278/45/4 & 6.551/86/4 & 8.416/48/4 &  & 11.660/62/8 & 17.204/62/5 & 43.989/73/18 \\
O-PD-AB & $V_a{+}P_g{+}Q_g$ & 18 & 17 & 1.625 & 9 & 2.027 & 35.8 & 0.020/12/5 & 0.034/3/5 & 0.068/-12/7 & 0.067/29/5 & 0.133/23/5 & 0.168/29/4 & 0.307/37/7 & 0.547/36/9 & 1.625/40/11 & 2.728/45/9 & 3.750/58/16 & 6.006/61/10 & 7.413/51/8 & 18.684/59/63 & 12.207/25/19 &  & 22.632/25/47 & \textbf{F} & 49.091/70/26 \\
O-PD-AB & $V_m{+}P_g{+}Q_g$ & 18 & 14 & 0.765 & 27 & 0.812 & -19.8 & 0.018/22/13 & 0.030/16/9 & 0.123/-102/22 & 0.068/29/9 & 0.159/9/19 & 0.174/27/6 & 0.686/-40/70 & 0.845/1/32 & 11.760/-332/242 & 7.217/-46/64 & 27.094/-202/266 & \textbf{F} & 11.389/24/17 & \textbf{F} & 24.927/-53/65 &  & \textbf{F} & 169.635/-277/283 & \textbf{F} \\
O-PD-AB & $V_a{+}V_m{+}P_g{+}Q_g$ & 18 & 18 & 1.786 & 4 & 2.389 & 50.1 & 0.017/27/2 & 0.030/15/2 & 0.099/-63/2 & 0.062/35/2 & 0.126/27/2 & 0.162/32/2 & 0.255/48/2 & 0.472/45/3 & 1.307/52/4 & 2.265/54/3 & 3.109/65/9 & 6.133/61/4 & 6.579/56/4 & 6.541/86/4 & 8.452/48/4 &  & 11.201/63/8 & 17.428/61/5 & 38.632/77/12 \\
\midrule
\multicolumn{27}{l}{\textit{Dual-only: constraint duals, no primal blocks}} \\
DO-C & --- & 18 & 13 & 1.023 & 65 & 1.059 & -39.2 & 0.027/-17/19 & 0.034/5/14 & 0.096/-58/30 & 0.081/15/18 & 0.207/-19/35 & 0.302/-27/41 & 1.023/-109/130 & 1.970/-131/104 & 8.838/-225/171 & 6.865/-39/65 & \textbf{F} & \textbf{F} & 25.834/-72/95 & 52.032/-14/211 & 37.338/-129/114 &  & \textbf{F} & \textbf{F} & \textbf{F} \\
\midrule
\multicolumn{27}{l}{\textit{Dual-only: all bound mults, no primal blocks}} \\
DO-B & --- & 18 & 14 & 1.795 & 47 & 1.859 & -36.2 & 0.051/-121/80 & 0.034/5/10 & 0.083/-37/27 & 0.086/10/24 & 0.236/-36/45 & 0.192/19/9 & 0.795/-63/97 & 2.795/-228/71 & 9.918/-265/209 & 5.562/-13/49 & \textbf{F} & 8.302/47/25 & 8.924/40/16 & \textbf{F} & 22.308/-37/62 &  & 61.616/-103/185 & \textbf{F} & \textbf{F} \\
\midrule
\multicolumn{27}{l}{\textit{Dual-only: constraint duals + all bound mults, no primal blocks}} \\
DO-F & --- & 18 & 14 & 0.732 & 24 & 0.891 & 8.6 & 0.019/17/15 & 0.031/13/7 & 0.105/-73/35 & 0.072/25/7 & 0.193/-11/32 & 0.174/27/5 & 0.523/-7/46 & 0.941/-10/28 & 8.907/-228/180 & 3.348/32/19 & 22.701/-153/228 & 9.279/40/19 & 9.781/35/11 & \textbf{F} & 15.688/4/32 &  & \textbf{F} & \textbf{F} & \textbf{F} \\
\midrule
\multicolumn{27}{l}{\textit{Practical DC-seeded (DC)}} \\
DC & $V_a$ & 18 & 17 & 2.630 & 38 & 3.239 & 3.3 & 0.022/3/15 & 0.030/15/15 & 0.099/-63/23 & 0.072/24/14 & 0.192/-10/29 & 0.233/2/21 & 0.413/15/30 & 1.408/-65/56 & 2.630/3/37 & 6.668/-35/61 & 6.529/27/51 & 15.678/-1/55 & 14.646/2/38 & \textbf{F} & 16.442/-1/39 &  & 27.365/10/64 & 42.229/6/61 & 107.184/35/110 \\
DC & $P_g$ & 18 & 17 & 2.856 & 40 & 3.434 & 0.1 & 0.023/-1/17 & 0.030/14/13 & 0.092/-51/23 & 0.072/25/12 & 0.171/2/22 & 0.227/5/19 & 0.430/12/32 & 0.856/-0/31 & 2.856/-5/42 & 5.212/-6/48 & 8.388/7/54 & 12.259/21/40 & 14.971/0/40 & \textbf{F} & 16.308/0/40 &  & 32.050/-6/78 & 43.777/3/62 & 165.297/0/176 \\
DC & $V_a{+}P_g$ & 18 & 18 & 4.008 & 34 & 4.628 & 9.1 & 0.018/21/13 & 0.032/9/14 & 0.077/-26/18 & 0.075/21/13 & 0.190/-9/28 & 0.217/8/16 & 0.418/15/30 & 1.153/-35/50 & 2.468/9/33 & 5.548/-12/48 & 7.639/15/44 & 12.021/23/38 & 11.942/20/32 & 58.667/-28/250 & 15.204/7/34 &  & 74.502/-146/195 & 37.476/17/52 & 108.666/34/107 \\
\bottomrule
\end{tabular}
\end{adjustbox}
\end{table}
\end{landscape}
\restoregeometry
\endgroup
\FloatBarrier
\clearpage


\ifextended

\appendices

\section{Supplementary Proofs}
\label{app:proofs}

\subsection{Interior-Point Initialization and Barrier Centering}
\label{app:barrier}

Interior-point methods replace the bound constraints $l \le x \le u$ with a logarithmic barrier $-\mu\sum_i \big[\log(x_i - l_i) + \log(u_i - x_i)\big]$ added to the objective and drive $\mu \to 0$. The barrier diverges as any component approaches its bound, so the barrier-augmented objective grows without bound near the feasible boundary (Fig.~\ref{fig:barrier_cartoon}). A starting point placed on or very close to a bound therefore sits where the barrier and its gradient are enormous, and the first Newton steps are dominated by the need to move back into the interior rather than toward the optimum. This is the mechanism behind the well-known observation that interior-point iteration counts are relatively insensitive to the starting point, which in turn makes these methods harder to warm-start than active-set or SQP methods that can use a high-quality approximate solution directly~\cite{forsgren2005,nocedal2006numerical}.

IPOPT addresses this by pushing any supplied primal value that lies too close to a bound a small distance into the interior, controlled by \mbox{\texttt{bound\_push}} and \mbox{\texttt{bound\_frac}} for a cold start and by \mbox{\texttt{warm\_start\_bound\_push}} and \mbox{\texttt{warm\_start\_bound\_frac}} when \mbox{\texttt{warm\_start\_init\_point=yes}}; the bound multipliers are likewise pushed off zero by \mbox{\texttt{warm\_start\_mult\_bound\_push}}~\cite{wachter2006ipopt,ipopt_options}. With these pushes set to $10^{-6}$ (Section~\ref{sec:compute}), IPOPT perturbs the supplied warm start only minimally, so stale or internally inconsistent initial values can have a large effect on the early iterates.

For any primal variable left without a user value, the modeling layer supplies a default that IPOPT then projects into the interior by the same bound-push rules. In the protocol studied here, however, no primal block is ever left blank: the omitted blocks of a partial restart are filled with the baseline case-data values (Section~\ref{sec:baseline}), so a ``partial'' restart mixes oracle values on some blocks with baseline values on others rather than leaving any block unspecified. The analogous choice for the duals is consequential: in the block-matched primal-plus-dual mode, bound multipliers are supplied only for initialized blocks, while omitted blocks retain IPOPT's default dual initialization. This mixed oracle/default dual state can be inconsistent with the supplied partial primal point, which helps explain why the all-bounds mode of Section~\ref{sec:oracle_pd} is more robust in the experiments.

\begin{figure}[!t]
\centering
\begin{tikzpicture}
\begin{axis}[
  width=\columnwidth, height=5.2cm,
  xlabel={single bounded variable $x$},
  ylabel={objective value},
  domain=0.02:0.98, samples=200,
  xmin=0, xmax=1, ymin=-0.1, ymax=1.6,
  legend pos=north west, legend cell align=left, legend style={font=\footnotesize},
  axis lines=left, thick, xtick={0,1}, xticklabels={$l$,$u$}, ytick=\empty,
]
\addplot[black, dashed] {x}; \addlegendentry{$f(x)$ (true objective)}
\addplot[blue]  {x - 0.1*ln(x)  - 0.1*ln(1-x)};  \addlegendentry{$f(x)+$ barrier, $\mu=0.1$}
\addplot[red]   {x - 0.02*ln(x) - 0.02*ln(1-x)}; \addlegendentry{$f(x)+$ barrier, $\mu=0.02$}
\end{axis}
\end{tikzpicture}
\caption{Why interior-point initialization is delicate. For a variable with bounds $[l,u]$ and a true objective $f(x)=x$ minimized at the lower bound, the log-barrier term diverges near both bounds, so the barrier-augmented objective has an interior minimizer that approaches the active bound only as $\mu\to 0$. A start placed on the boundary must first be moved into the interior before the Newton steps can make progress toward the solution.}
\label{fig:barrier_cartoon}
\end{figure}

\subsection{Supplementary Residual Remarks}
\label{app:residuals}

The paper states the stationarity, complementarity, and barrier-centering residual bounds used to interpret partial primal-dual restarts. This appendix records two additional interpretive remarks and a feasibility-residual bound that are useful for reading the full results matrix.

\begin{remark}[Relation to standard sensitivity analysis]
The bound in Lemma~\ref{lem:residual} is an elementary consequence of the $C^2$ smoothness of the KKT system and is closely related to classical perturbation and sensitivity results for parametric nonlinear programs~\cite{forsgren2005,nocedal2006numerical,fiacco1983sensitivity}. We state it here not as a new result but to make the warm-start mechanism explicit and quantitative in the ACOPF setting, and to connect the omitted-block displacement directly to the complementarity and barrier-centering residuals (Corollaries~\ref{cor:compl}--\ref{cor:barrier}).
\end{remark}

\begin{remark}[Interpretation of the displacement bound]
\label{rem:blockwise}
Equation~\eqref{eq:sensitivitybound} is a global norm bound and does not, in general, decompose into a sum of purely block-diagonal contributions unless the relevant off-diagonal Hessian blocks vanish or are separately bounded. In standard ACOPF, some generator--voltage cross-derivative blocks are zero under the usual additive generator-injection model (cf. Remark~\ref{rem:coupling}), but substantial within-voltage and within-generator couplings remain. The practical implication is therefore qualitative rather than exact: omitted blocks with larger displacement from $x^\star$ and stronger local curvature can contribute more to the stationarity mismatch, but the contribution need not be separable block by block.
\end{remark}

\begin{corollary}[Primal feasibility residual under partial replacement]
\label{cor:feasibility}
Under the setup of Lemma~\ref{lem:residual}, suppose $g(x^\star) \approx 0$ to solver tolerance. Then for the partial restart point $\tilde{x}$,
\begin{equation}
  \|g(\tilde{x})\| \le \|g(x^\star)\| + L_g \|\tilde{x} - x^\star\|,
\end{equation}
where $L_g := \sup_{\xi \in [x^\star,\, \tilde{x}]} \|\nabla g(\xi)\|$ is the Lipschitz constant of $g$ on the segment. An analogous bound holds for the inequality constraints $c(\tilde{x})$. Thus, omitting primal blocks creates a feasibility residual proportional to the displacement, independent of any dual information.
\end{corollary}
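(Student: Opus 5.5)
The bound is the same argument as Lemma~\ref{lem:residual}, applied to the constraint map $g$ instead of the gradient map $r$. I will use the fundamental theorem of calculus along the segment from $x^\star$ to $\tilde{x}$, with the parametrization $\xi(t) := x^\star + t(\tilde{x}-x^\star)$ for $t\in[0,1]$. Since $g$ is $C^1$ (indeed $C^2$, by the smoothness hypotheses of Lemma~\ref{lem:residual}),
\begin{equation*}
  g(\tilde{x}) - g(x^\star) = \int_0^1 \nabla g\bigl(\xi(t)\bigr)\,(\tilde{x}-x^\star)\,dt,
\end{equation*}
where $\nabla g$ denotes the Jacobian of $g$.

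Next I take Euclidean norms and move the norm inside the integral. Each integrand is then bounded by $\|\nabla g(\xi(t))\|\,\|\tilde{x}-x^\star\|$ using the induced spectral norm, as fixed before Lemma~\ref{lem:residual}. Since every $\xi(t)$ lies in $[x^\star,\tilde{x}]$, this is at most $L_g\|\tilde{x}-x^\star\|$.

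To see that $L_g$ is finite, note that the segment is compact and $\nabla g$ is continuous, so the supremum is attained. The triangle inequality $\|g(\tilde{x})\|\le\|g(x^\star)\|+\|g(\tilde{x})-g(x^\star)\|$ then gives the stated bound. I would also note in passing that $L_g$ really is a Lipschitz constant for $g$ on the segment, by the same mean-value estimate applied between any two points of the segment.

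For the inequality constraints I repeat the argument verbatim with $c$ and $L_c:=\sup_{\xi\in[x^\star,\tilde{x}]}\|\nabla c(\xi)\|$. It is worth remarking that the meaningful residual there is the positive part $\|\max(c(\tilde{x}),0)\|$. This is bounded the same way, because $\max(\cdot,0)$ is $1$-Lipschitz componentwise and $\max(c(x^\star),0)\approx 0$.

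Finally, the independence from dual information is immediate: neither $g$ nor $c$ involves $\nu$, so the bound holds identically across the O-PO, O-PD, O-CD, O-BD, and O-PD-AB modes. There is no real obstacle here. The only point needing care is the choice of norm, namely consistently using the induced spectral norm for $\nabla g$ so that the matrix–vector bound is valid.
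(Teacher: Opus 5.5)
Your proof is correct and follows the same route as the paper's: a mean value inequality for $g$ along the segment, followed by the triangle inequality. Your integral form is the cleaner way to get the constant $L_g=\sup_{\xi}\|\nabla g(\xi)\|$ in the spectral norm, because the paper's ``componentwise'' mean-value step would use a different intermediate point for each row of the Jacobian.
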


\begin{proof}
By the mean value inequality applied componentwise to $g$,
\begin{equation}
  \|g(\tilde{x}) - g(x^\star)\| \le L_g \|\tilde{x} - x^\star\|.
\end{equation}
The triangle inequality gives the result.
\end{proof}

\begin{remark}[Feasibility and stationarity mismatch under generator-block omission]
\label{rem:coupling}
In the standard ACOPF formulation, $P_g$ and $Q_g$ enter the power-balance constraints additively, so all generator--voltage cross-derivative blocks $\partial^2\mathcal{L}/\partial V_k \partial P_g$ and $\partial^2\mathcal{L}/\partial V_k \partial Q_g$ for $V_k \in \{V_m, V_a\}$ are zero. This strict decoupling relies on the usual additive generator-injection model and need not hold under voltage-dependent capability curves, coupled generator constraints, or other formulations that mix generator and voltage variables more explicitly. The stationarity residual from Lemma~\ref{lem:residual} therefore does not propagate generator-block displacement into voltage-block residual components through Hessian coupling; indeed, because $g$ depends on the voltage blocks independently of $P_g$ and $Q_g$, the voltage-block components of $\nabla_x\mathcal{L}(\tilde{x},\nu^\star)$ remain near zero even when the generator blocks are left at baseline. However, the harm from partial restarts operates through two other channels. First, when $P_g$ and $Q_g$ are left at baseline values while $V_m$ and $V_a$ are set to their optimal values, the nonlinear AC power-balance equations $g(\tilde{x}) = 0$ are generally violated at $\tilde{x}$, creating a primal feasibility residual bounded by Corollary~\ref{cor:feasibility} even before considering dual consistency. Second, the reused constraint multipliers $(\lambda^\star,\mu_c^\star)$ correspond to $x^\star$, not to $\tilde{x}$, so the generator-side components of the stationarity residual $r(\tilde{x})$ are nonzero; in the block-matched mode the omitted generator bounds additionally carry default rather than optimal multipliers, compounding the inconsistency. Together, these feasibility, stationarity, and complementarity mismatches are consistent with the observation that $V_m{+}V_a$ with block-matched dual variables achieves a $-126.3\%$ median slowdown despite both voltage blocks being correctly initialized: the omitted generator blocks create feasibility violations and dual inconsistencies that the solver must resolve before making barrier progress.
\end{remark}

\section{Supplementary Visual Summaries}
\label{app:supplementary_summaries}

This appendix collects secondary summary displays that support the paper conclusions but are not necessary for the central argument. Fig.~\ref{fig:baseline_vs_best}, Table~\ref{tab:family_wins}, and Figs.~\ref{fig:case_size_runtime}--\ref{fig:speedup_distributions} summarize family win counts, scaling, performance profiles, ordered per-case speedups, and case-wise speedup distributions.

\subsection{Case-Wise Illustrations}
\label{sec:casewise}

Figure~\ref{fig:baseline_vs_best} shows the case-level runtime comparisons. In the left panel, the case-wise best O-PD points lie below the diagonal for most instances, which reflects the broad improvement over the baseline already seen in Table~\ref{tab:global_summary}. The best DC-seeded AC solve is more mixed: it often helps, but the gains are smaller and less consistent. The right panel shows the effect of including DCOPF presolve time. Once baseline total time is compared with the best DC end-to-end workflow, the points cluster around the diagonal rather than lying systematically below it. That visual pattern matches the non-significant matched-case E2E test in Table~\ref{tab:pairwise_tests}. Additional scaling, performance-profile, and distributional views are reported below in this appendix.

\begin{table}[!t]
\centering
\scriptsize
\setlength{\tabcolsep}{3pt}
\renewcommand{\arraystretch}{0.96}
\caption{Full ranking of all oracle AC combinations by median AC solve time. Rows are grouped by family and sorted by ascending median solve time. ``Conv.'' is the number of successful runs; case wins are tallied within each family. Median and mean solve times are in seconds.}
\label{tab:combo_rank_oracle}
\begin{tabular}{lrrrrr}
\toprule
Combination & Conv. & Med.\ solve & Mean solve & Med.\ iter & Wins \\
\midrule
\multicolumn{6}{l}{\textit{Oracle AC primal-only}} \\
$Q_g{+}V_m$ & 16 & 1.837 & 14.853 & 37 & 0 \\
$V_m$ & 17 & 2.820 & 25.840 & 40 & 0 \\
$P_g{+}Q_g$ & 17 & 2.948 & 20.489 & 36 & 1 \\
$P_g{+}V_m{+}V_a$ & 18 & 3.572 & 13.750 & 30 & 3 \\
$P_g{+}Q_g{+}V_m$ & 18 & 3.735 & 18.547 & 34 & 0 \\
$Q_g{+}V_m{+}V_a$ & 18 & 3.806 & 14.044 & 36 & 1 \\
$P_g{+}Q_g{+}V_m{+}V_a$ & 18 & 3.872 & 13.524 & 30 & 6 \\
$P_g{+}Q_g{+}V_a$ & 18 & 4.151 & 18.448 & 32 & 2 \\
$P_g{+}V_a$ & 18 & 4.282 & 16.598 & 32 & 2 \\
$Q_g$ & 18 & 4.306 & 57.391 & 40 & 0 \\
$P_g$ & 18 & 4.369 & 19.713 & 37 & 1 \\
$P_g{+}V_m$ & 18 & 4.441 & 22.868 & 34 & 1 \\
$V_a$ & 18 & 4.545 & 22.280 & 38 & 0 \\
$Q_g{+}V_a$ & 18 & 4.561 & 22.697 & 38 & 1 \\
$V_m{+}V_a$ & 18 & 4.638 & 13.819 & 36 & 0 \\
\midrule
\multicolumn{6}{l}{\textit{Oracle AC primal+dual}} \\
$V_m$ & 10 & 0.270 & 8.090 & 54 & 0 \\
$Q_g{+}V_m$ & 10 & 0.271 & 7.466 & 66 & 0 \\
$Q_g{+}V_m{+}V_a$ & 10 & 0.281 & 8.319 & 58 & 0 \\
$Q_g{+}V_a$ & 11 & 0.292 & 9.037 & 87 & 0 \\
$Q_g$ & 10 & 0.301 & 9.678 & 60 & 0 \\
$V_m{+}V_a$ & 11 & 0.567 & 8.697 & 108 & 0 \\
$P_g{+}Q_g$ & 13 & 0.614 & 15.527 & 58 & 0 \\
$P_g{+}Q_g{+}V_m$ & 14 & 0.753 & 18.396 & 27 & 1 \\
$P_g{+}V_m$ & 14 & 0.822 & 11.835 & 46 & 0 \\
$P_g$ & 13 & 1.260 & 19.946 & 58 & 0 \\
$P_g{+}V_m{+}V_a$ & 15 & 1.273 & 6.986 & 31 & 0 \\
$V_a$ & 12 & 1.298 & 12.984 & 116 & 0 \\
$P_g{+}V_a$ & 15 & 1.369 & 11.100 & 57 & 0 \\
$P_g{+}Q_g{+}V_m{+}V_a$ & 18 & 1.804 & 5.868 & 4 & 16 \\
$P_g{+}Q_g{+}V_a$ & 14 & 1.983 & 6.658 & 50 & 1 \\
\midrule
\multicolumn{6}{l}{\textit{Oracle AC constraint-dual-only}} \\
$Q_g{+}V_m{+}V_a$ & 12 & 0.495 & 15.128 & 73 & 0 \\
$Q_g$ & 13 & 0.612 & 12.246 & 63 & 0 \\
$Q_g{+}V_a$ & 12 & 0.909 & 7.865 & 72 & 0 \\
$V_m{+}V_a$ & 13 & 0.925 & 14.999 & 90 & 0 \\
$P_g{+}V_m$ & 14 & 0.989 & 6.323 & 24 & 0 \\
$P_g{+}Q_g{+}V_a$ & 15 & 1.236 & 6.793 & 33 & 0 \\
$P_g{+}Q_g$ & 14 & 1.265 & 8.128 & 50 & 0 \\
$V_a$ & 14 & 1.339 & 15.428 & 89 & 0 \\
$V_m$ & 14 & 1.441 & 16.808 & 81 & 0 \\
$Q_g{+}V_m$ & 14 & 1.543 & 17.679 & 85 & 0 \\
$P_g$ & 15 & 1.635 & 11.753 & 42 & 0 \\
$P_g{+}V_m{+}V_a$ & 16 & 1.756 & 9.775 & 20 & 0 \\
$P_g{+}Q_g{+}V_m{+}V_a$ & 18 & 1.944 & 6.164 & 6 & 18 \\
$P_g{+}V_a$ & 17 & 3.013 & 18.769 & 33 & 0 \\
$P_g{+}Q_g{+}V_m$ & 16 & 3.724 & 13.762 & 42 & 0 \\
\midrule
\multicolumn{6}{l}{\textit{Oracle AC bounds-dual-only}} \\
$V_m$ & 11 & 0.301 & 10.458 & 99 & 0 \\
$Q_g$ & 10 & 0.323 & 6.273 & 83 & 0 \\
$V_a$ & 11 & 0.373 & 7.088 & 104 & 0 \\
$Q_g{+}V_a$ & 11 & 0.517 & 7.727 & 72 & 0 \\
$Q_g{+}V_m{+}V_a$ & 11 & 0.618 & 12.281 & 93 & 0 \\
$P_g{+}Q_g{+}V_m$ & 12 & 0.645 & 8.085 & 62 & 0 \\
$P_g{+}V_m$ & 12 & 0.715 & 7.745 & 74 & 0 \\
$Q_g{+}V_m$ & 11 & 0.726 & 10.324 & 84 & 0 \\
$V_m{+}V_a$ & 13 & 0.752 & 14.386 & 97 & 0 \\
$P_g{+}Q_g$ & 13 & 1.191 & 9.381 & 62 & 0 \\
$P_g$ & 13 & 1.521 & 7.473 & 74 & 0 \\
$P_g{+}Q_g{+}V_a$ & 14 & 1.565 & 10.447 & 66 & 0 \\
$P_g{+}V_a$ & 14 & 1.611 & 10.016 & 75 & 0 \\
$P_g{+}Q_g{+}V_m{+}V_a$ & 16 & 1.905 & 7.984 & 16 & 15 \\
$P_g{+}V_m{+}V_a$ & 16 & 2.814 & 15.009 & 39 & 2 \\
\bottomrule
\end{tabular}
\end{table}

\begin{table}[!t]
\centering
\scriptsize
\setlength{\tabcolsep}{3pt}
\renewcommand{\arraystretch}{0.96}
\addtocounter{table}{-1}
\caption{Full ranking of all oracle AC combinations by median AC solve time (continued). Median and mean solve times are in seconds.}
\begin{tabular}{lrrrrr}
\toprule
Combination & Conv. & Med.\ solve & Mean solve & Med.\ iter & Wins \\

\midrule
\multicolumn{6}{l}{\textit{Oracle AC primal+dual (all bounds)}} \\
$P_g{+}Q_g{+}V_m$ & 14 & 0.765 & 18.152 & 27 & 0 \\
$Q_g{+}V_m$ & 15 & 0.792 & 10.742 & 35 & 0 \\
$P_g{+}Q_g$ & 14 & 0.874 & 13.639 & 32 & 0 \\
$P_g{+}V_m$ & 15 & 1.148 & 13.557 & 21 & 0 \\
$Q_g$ & 14 & 1.233 & 5.472 & 35 & 0 \\
$V_m{+}V_a$ & 17 & 1.426 & 6.024 & 4 & 0 \\
$V_a$ & 17 & 1.589 & 12.015 & 7 & 1 \\
$P_g{+}Q_g{+}V_a$ & 17 & 1.625 & 7.381 & 9 & 0 \\
$Q_g{+}V_a$ & 17 & 1.644 & 12.316 & 7 & 1 \\
$P_g{+}Q_g{+}V_m{+}V_a$ & 18 & 1.786 & 5.715 & 4 & 9 \\
$Q_g{+}V_m{+}V_a$ & 18 & 1.822 & 6.063 & 4 & 6 \\
$P_g{+}V_m{+}V_a$ & 18 & 1.836 & 13.584 & 4 & 1 \\
$P_g{+}V_a$ & 18 & 2.001 & 17.453 & 8 & 0 \\
$V_m$ & 16 & 2.768 & 14.070 & 19 & 0 \\
$P_g$ & 17 & 3.215 & 22.735 & 30 & 0 \\
\bottomrule
\end{tabular}
\end{table}

\begin{figure*}[!t]
\centering
\IfFileExists{figs/fig_09_baseline_vs_best_scatter.pdf}{%
  \includegraphics[width=0.7\textwidth]{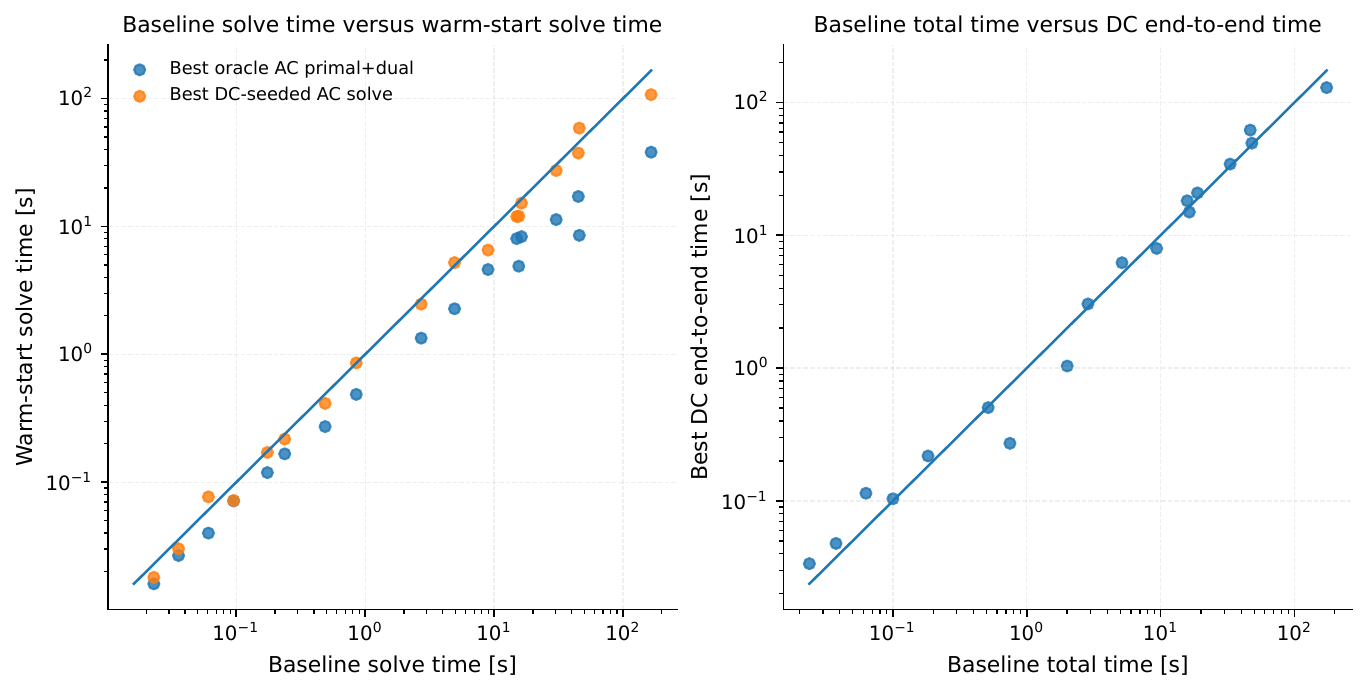}%
}{%
  \IfFileExists{figs/fig_09_baseline_vs_best_scatter.png}{%
    \includegraphics[width=0.7\textwidth]{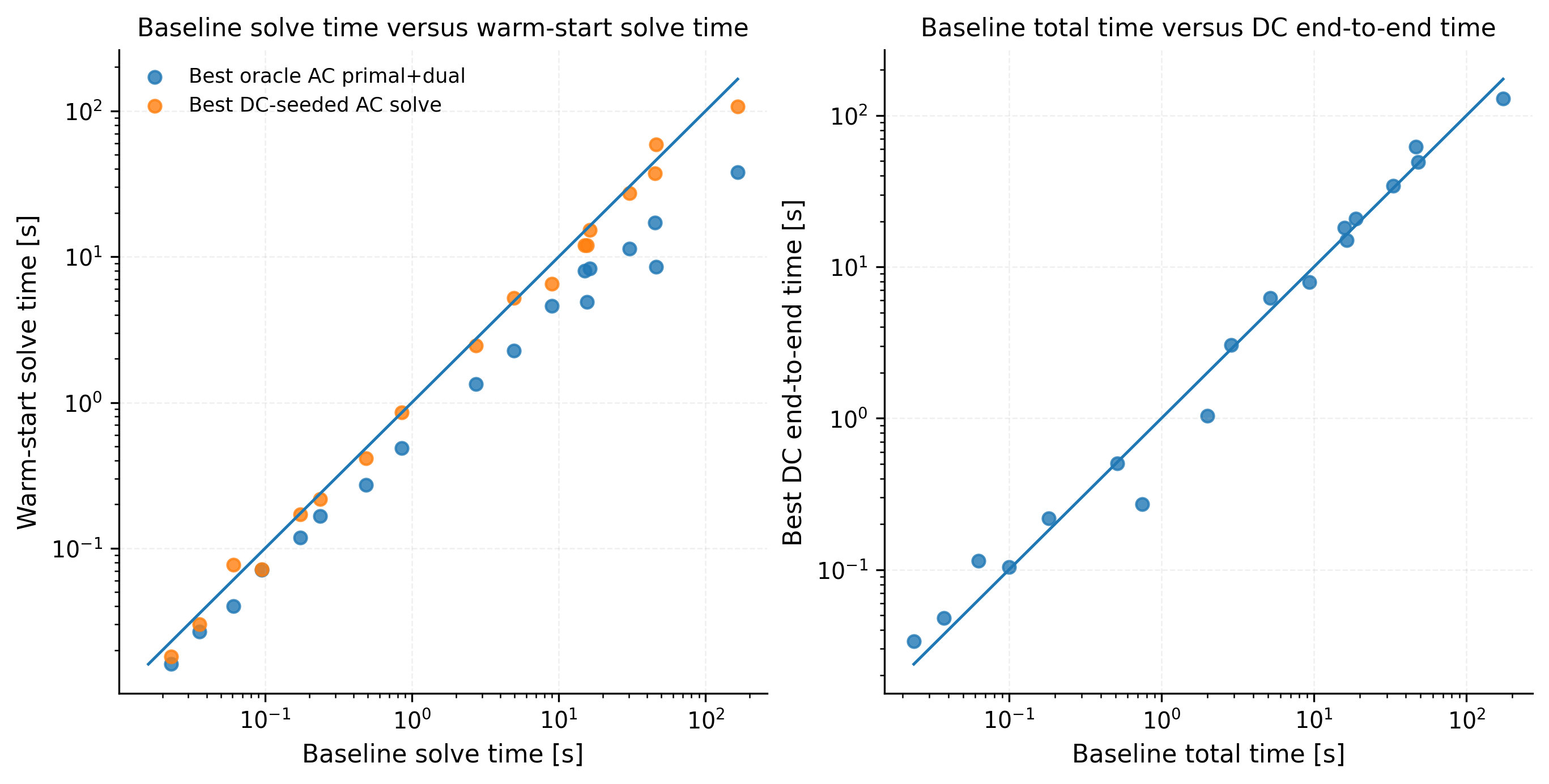}%
  }
}
\caption{Left: Baseline AC solve time versus case-wise best O-PD and case-wise best DC-seeded AC solve. Right: Baseline total time versus case-wise best DC end-to-end time.}
\label{fig:baseline_vs_best}
\end{figure*}

\begin{table}[!t]
\centering
\caption{Case-wise family win counts by best AC solve time across all families.}
\label{tab:family_wins}
\small
\begin{tabular}{lr}
\toprule
Family & Case wins \\
\midrule
Oracle AC primal-only (O-PO) & 0 \\
Block-matched primal+dual (O-PD) & 8 \\
Constraint-dual-only (O-CD) & 1 \\
Bounds-dual-only (O-BD) & 0 \\
All-bounds primal+dual (O-PD-AB) & 9 \\
DC-seeded & 0 \\
\bottomrule
\end{tabular}
\end{table}

\begin{figure}[!t]
\centering
\IfFileExists{figs/fig_01_case_size_vs_runtime.pdf}{%
  \includegraphics[width=0.98\columnwidth]{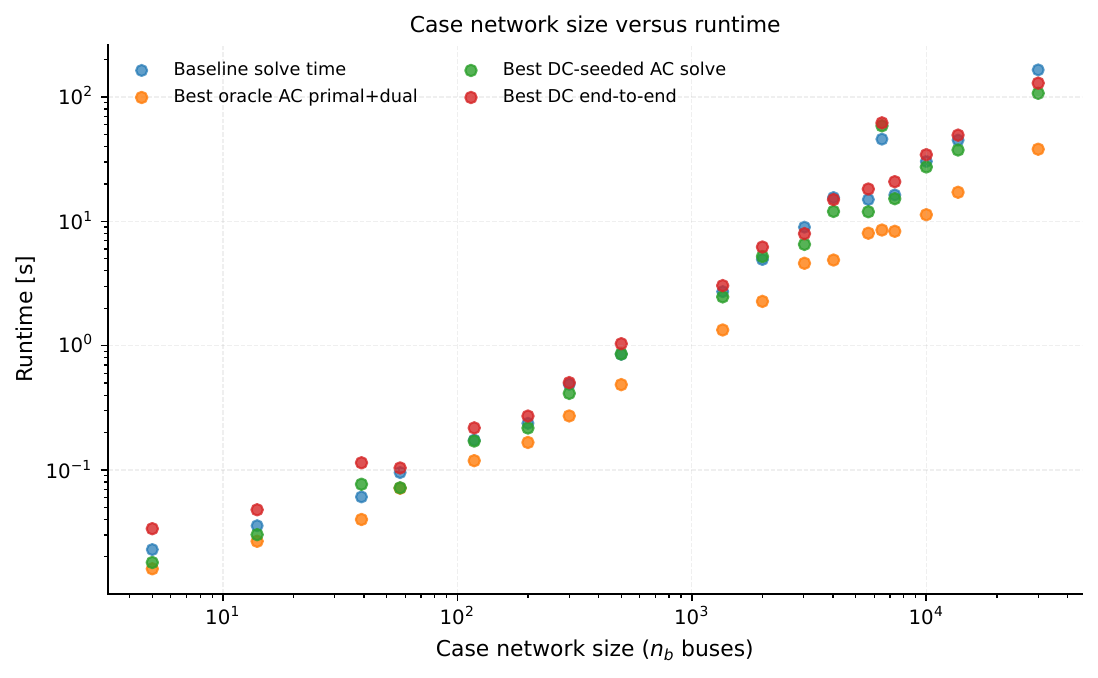}%
}{%
  \IfFileExists{figs/fig_01_case_size_vs_runtime.png}{%
    \includegraphics[width=0.98\columnwidth]{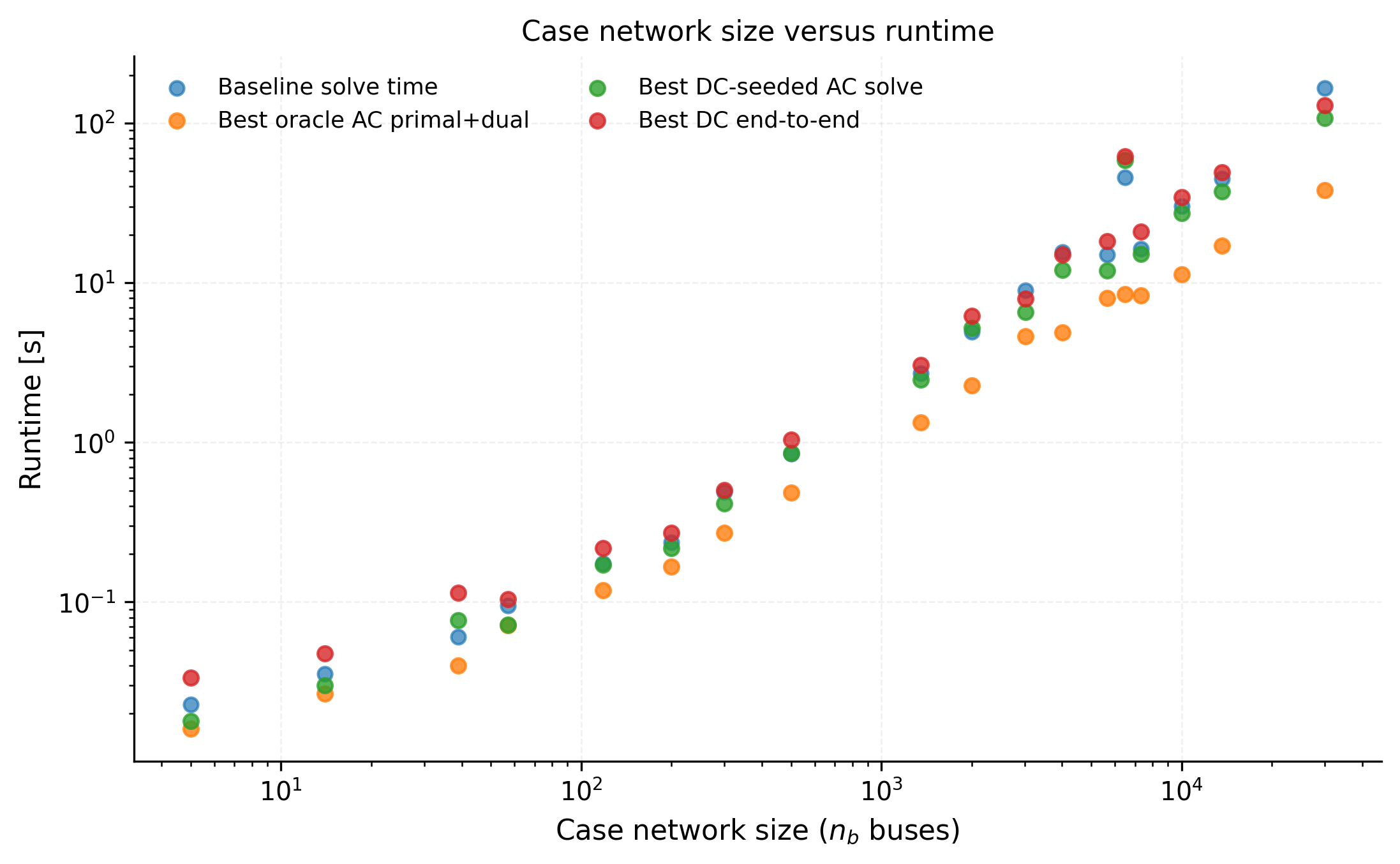}%
  }
}
\caption{Case network size ($n_b$ buses, log scale) versus runtime (log scale) for the baseline, best oracle AC primal-plus-dual restart, best DC-seeded AC solve, and best DC end-to-end workflow.}
\label{fig:case_size_runtime}
\end{figure}

\begin{figure}[!t]
\centering
\IfFileExists{figs/fig_02_performance_profile.pdf}{%
  \includegraphics[width=0.98\columnwidth]{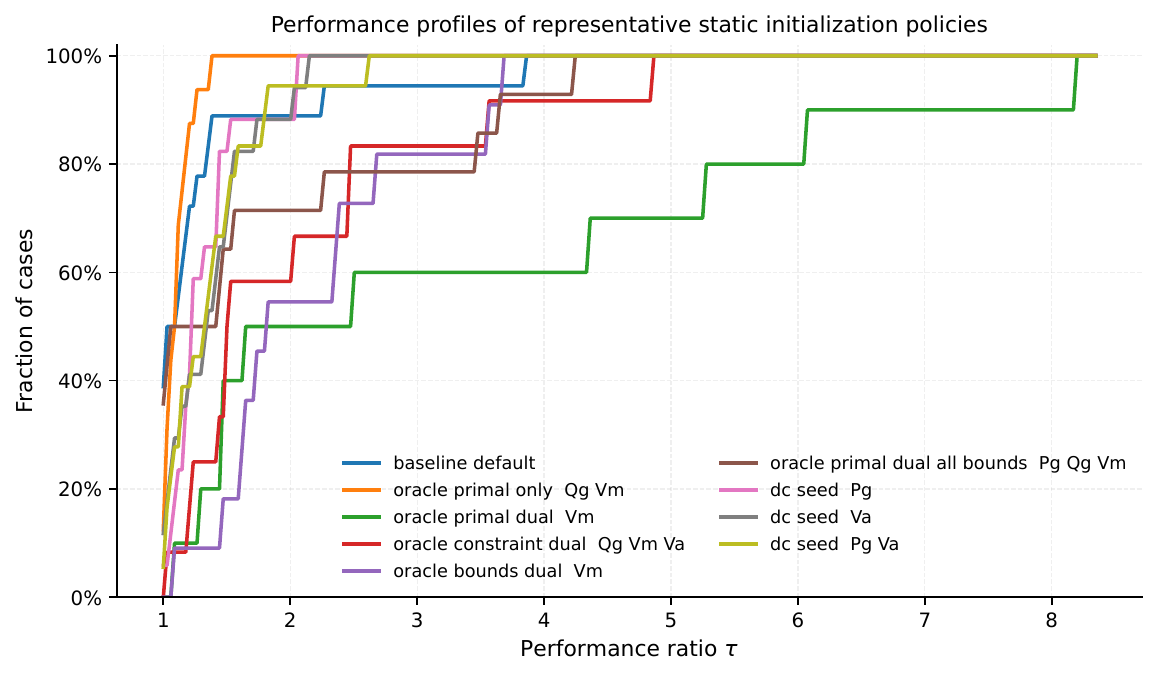}%
}{%
  \IfFileExists{figs/fig_02_performance_profile.png}{%
    \includegraphics[width=0.98\columnwidth]{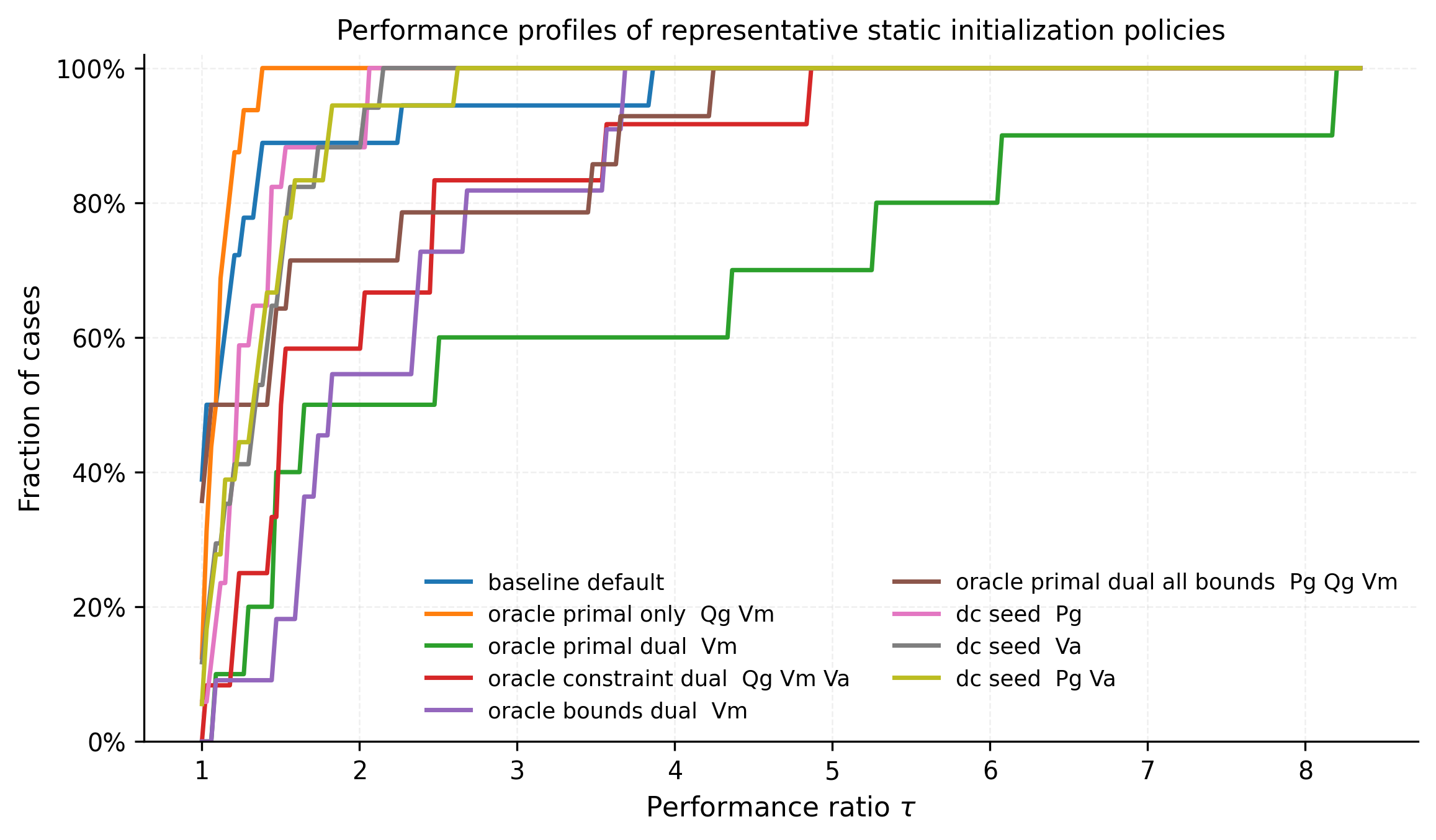}%
  }
}
\caption{Performance profiles~\cite{dolan2002benchmarking} of representative static initialization policies across all families. Failed cases are assigned ratio $\infty$.}
\label{fig:performance_profile}
\end{figure}

\begin{figure}[!t]
\centering
\IfFileExists{figs/fig_03_casewise_speedups.pdf}{%
  \includegraphics[width=0.98\columnwidth]{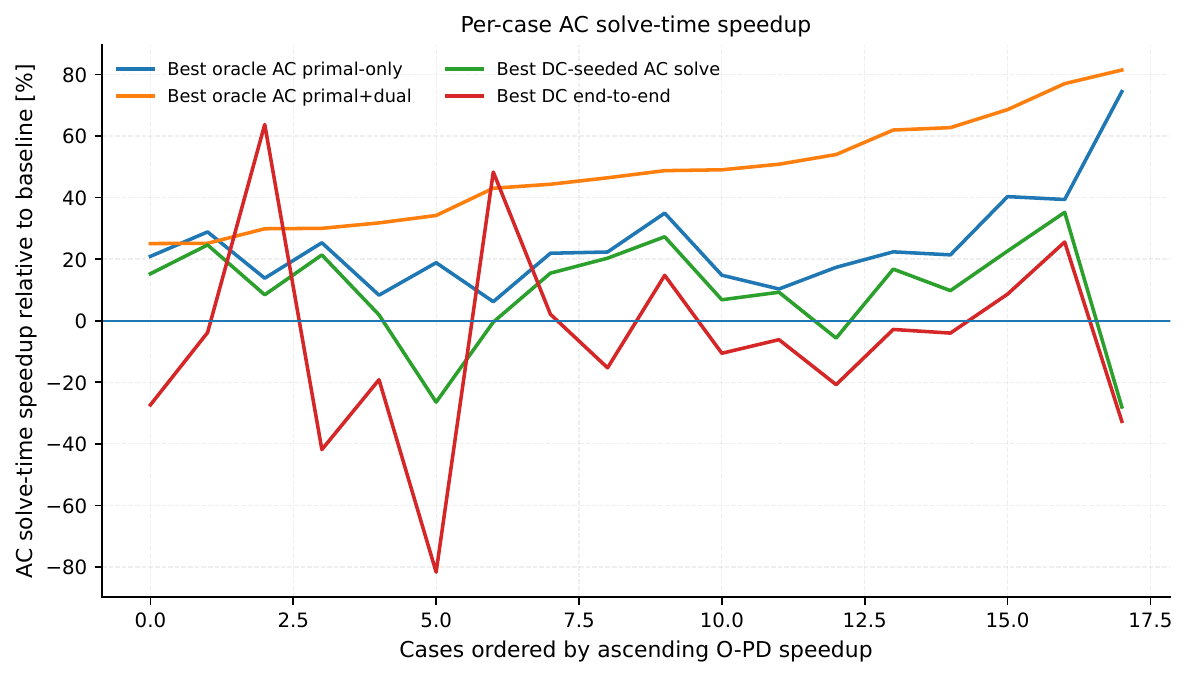}%
}{%
  \IfFileExists{figs/fig_03_casewise_speedups.png}{%
    \includegraphics[width=0.98\columnwidth]{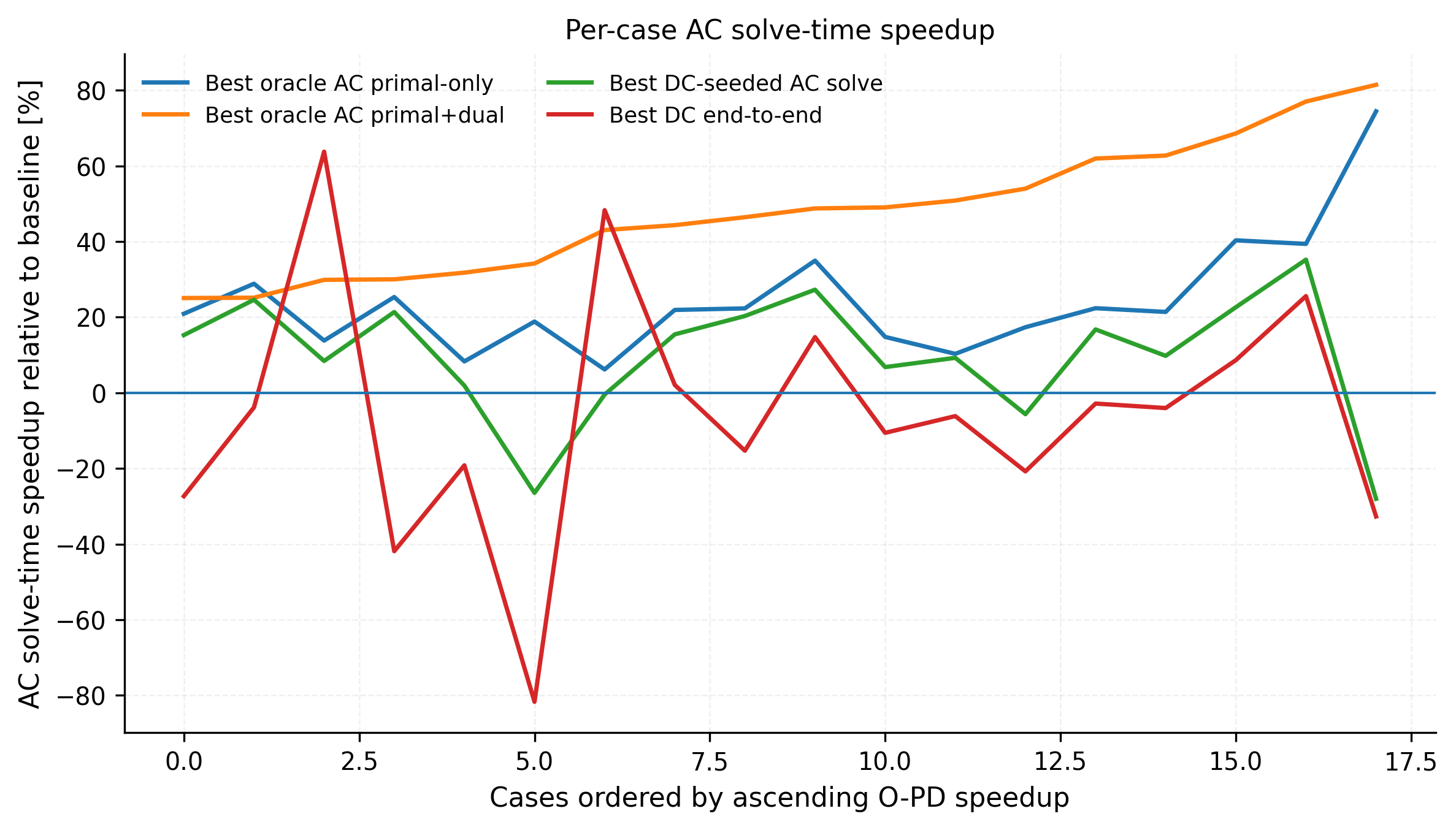}%
  }
}
\caption{Per-case AC solve-time speedup ordered by ascending O-PD speedup.}
\label{fig:casewise_speedups}
\end{figure}

\begin{figure}[!t]
\centering
\IfFileExists{figs/fig_08_speedup_distributions.pdf}{%
  \includegraphics[width=0.95\columnwidth]{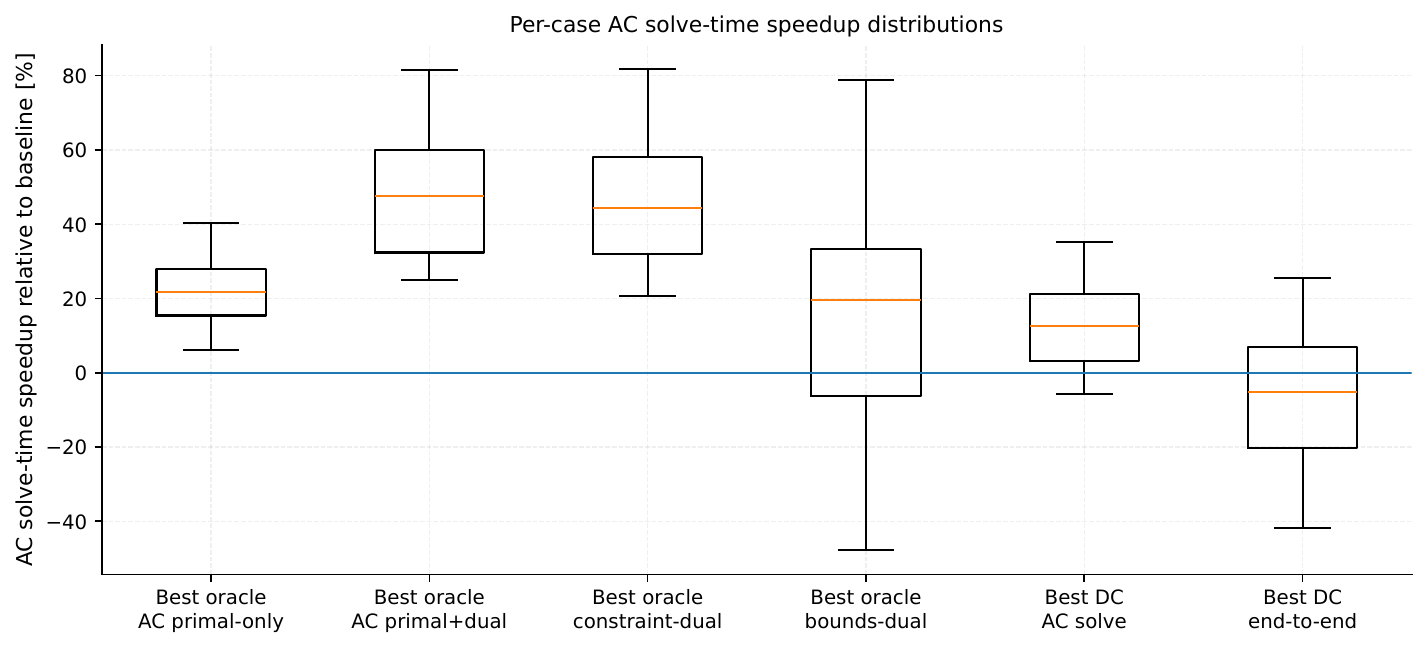}%
}{%
  \IfFileExists{figs/fig_08_speedup_distributions.png}{%
    \includegraphics[width=0.95\columnwidth]{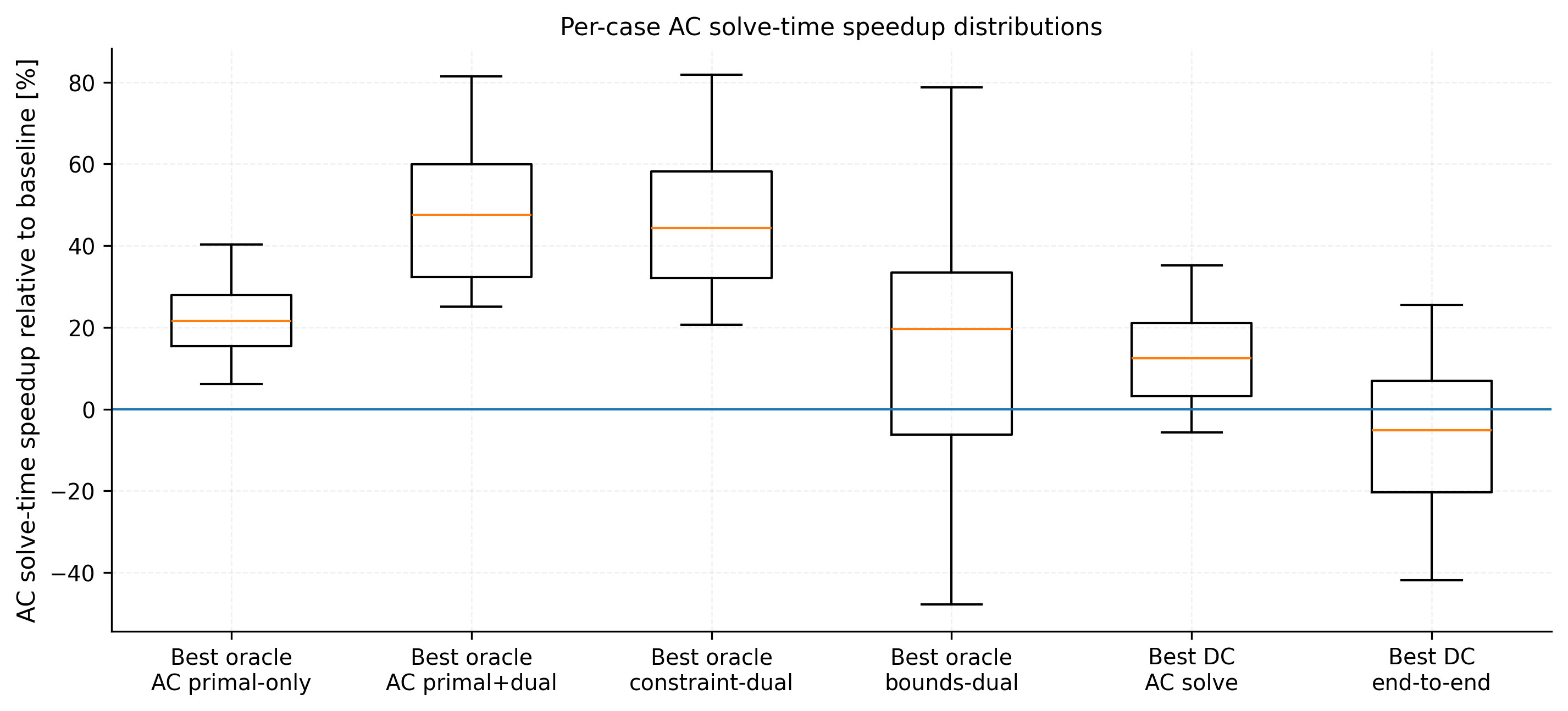}%
  }
}
\caption{Box plots of per-case AC solve-time speedup for the case-wise best envelope of each family.}
\label{fig:speedup_distributions}
\end{figure}

\section{Marginal Block Diagnostics}
\label{app:block_diagnostics}

Table~\ref{tab:block_effects} and Fig.~\ref{fig:block_benefits} report blockwise marginals. They are reported for completeness, but the paper emphasizes combination-level evidence because these marginals average over interaction effects, case-set changes due to failures, and large-case leverage.

\begin{table}[!t]
\centering
\caption{Marginal AC solve-time statistics for each initialization block. “Avg.\ reduction [\%]” is computed as $(t_{\text{excl}} - t_{\text{incl}})/t_{\text{excl}} \times 100$.}
\label{tab:block_effects}
\small
\setlength{\tabcolsep}{5pt}
\begin{adjustbox}{width=\columnwidth}
\begin{tabular}{p{5cm}lrrr}
\toprule
Family & Block & Avg.\ incl.\ [s] & Avg.\ excl.\ [s] & Avg.\ reduction [\%] \\
\midrule
Oracle AC primal-only & $V_a$ & 16.895 & 25.890 & 34.7 \\
Oracle AC primal-only & $V_m$ & 17.127 & 25.413 & 32.6 \\
Oracle AC primal-only & $P_g$ & 17.975 & 24.562 & 26.8 \\
Oracle AC primal-only & $Q_g$ & 22.622 & 19.214 & -17.7 \\
\midrule
Oracle AC primal+dual & $V_a$ & 8.530 & 13.533 & 37.0 \\
Oracle AC primal+dual & $V_m$ & 9.491 & 12.192 & 22.2 \\
Oracle AC primal+dual & $P_g$ & 11.677 & 9.276 & -25.9 \\
Oracle AC primal+dual & $Q_g$ & 10.123 & 11.429 & 11.4 \\
\midrule
Oracle AC constraint-dual-only & $V_a$ & 11.754 & 12.408 & 5.3 \\
Oracle AC constraint-dual-only & $V_m$ & 12.268 & 11.806 & -3.9 \\
Oracle AC constraint-dual-only & $P_g$ & 10.297 & 14.445 & 28.7 \\
Oracle AC constraint-dual-only & $Q_g$ & 10.785 & 13.462 & 19.9 \\
\midrule
Oracle AC bounds-dual-only & $V_a$ & 10.749 & 8.541 & -25.9 \\
Oracle AC bounds-dual-only & $V_m$ & 10.868 & 8.503 & -27.8 \\
Oracle AC bounds-dual-only & $P_g$ & 9.667 & 9.954 & 2.9 \\
Oracle AC bounds-dual-only & $Q_g$ & 9.075 & 10.561 & 14.1 \\
\midrule
Oracle AC primal+dual (all bounds) & $V_a$ & 10.087 & 14.265 & 29.3 \\
Oracle AC primal+dual (all bounds) & $V_m$ & 10.707 & 13.222 & 19.0 \\
Oracle AC primal+dual (all bounds) & $P_g$ & 13.908 & 9.544 & -45.7 \\
Oracle AC primal+dual (all bounds) & $Q_g$ & 9.682 & 14.240 & 32.0 \\
\midrule
DC-seeded & $V_a$ & 16.519 & 17.825 & 7.3 \\
DC-seeded & $V_m$ & --- & 16.946 & --- \\
DC-seeded & $P_g$ & 18.267 & 14.226 & -28.4 \\
DC-seeded & $Q_g$ & --- & 16.946 & --- \\
\bottomrule
\end{tabular}
\end{adjustbox}
\end{table}
\begin{figure}[!t]
\centering
\IfFileExists{figs/fig_06_block_effects.pdf}{%
  \includegraphics[width=0.98\columnwidth]{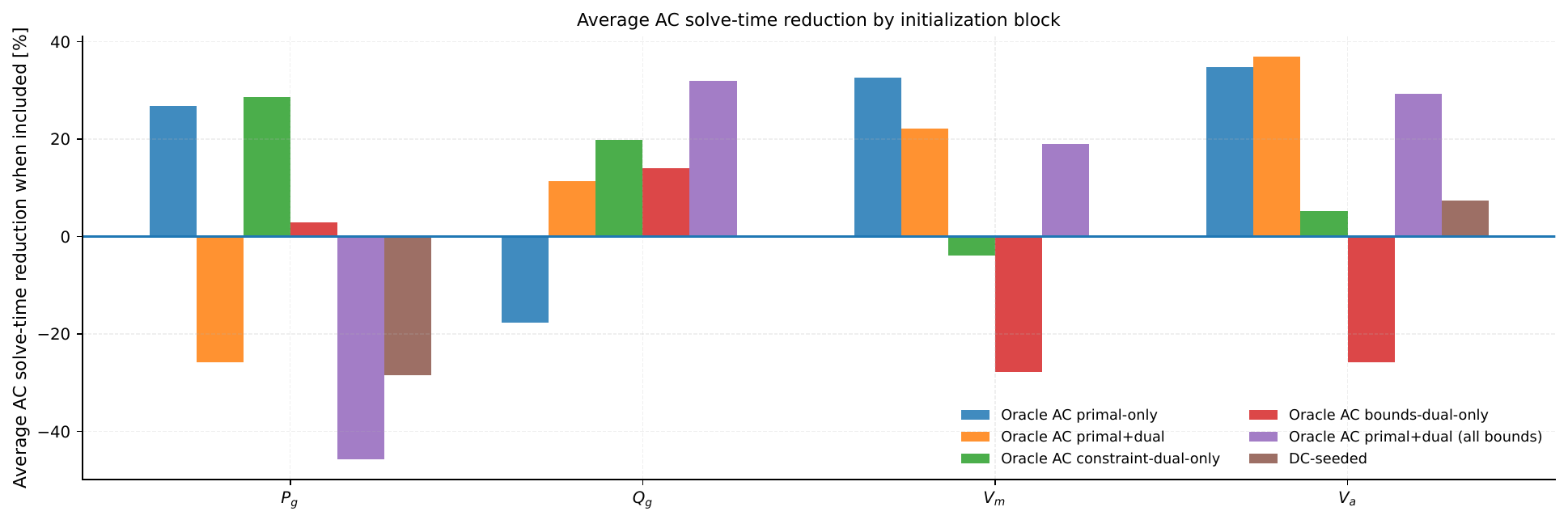}%
}{%
  \IfFileExists{figs/fig_06_block_effects.png}{%
    \includegraphics[width=0.98\columnwidth]{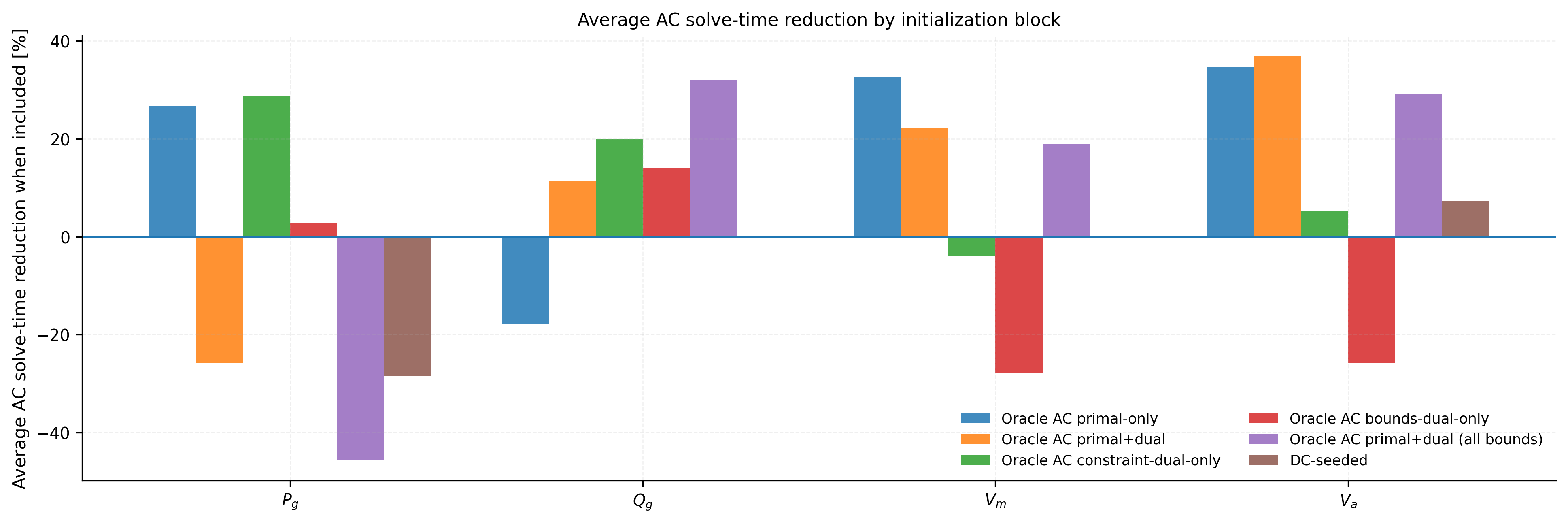}%
  }
}
\caption{Average AC solve-time reduction (\%) associated with including each initialization block, shown for all families.}
\label{fig:block_benefits}
\end{figure}

\section{Additional Dual-Only, DC, and Scaling Results}
\label{app:dual_dc_scaling}

This appendix collects supporting material for the dual-only controls, the DC-seeded family, and the speedup-versus-size view. Figure~\ref{fig:dual_only_bar}, Table~\ref{tab:combo_rank_dc}, and Figs.~\ref{fig:dc_native_boxplot} and \ref{fig:speedup_vs_case_size} summarize the dual-only ablations, the static DC ranking, the E2E distribution for DC seeds, and the cross-case size scaling.

\begin{figure}[!t]
\centering
\IfFileExists{figs/fig_14_dual_only_bar.pdf}{%
  \includegraphics[width=0.95\columnwidth]{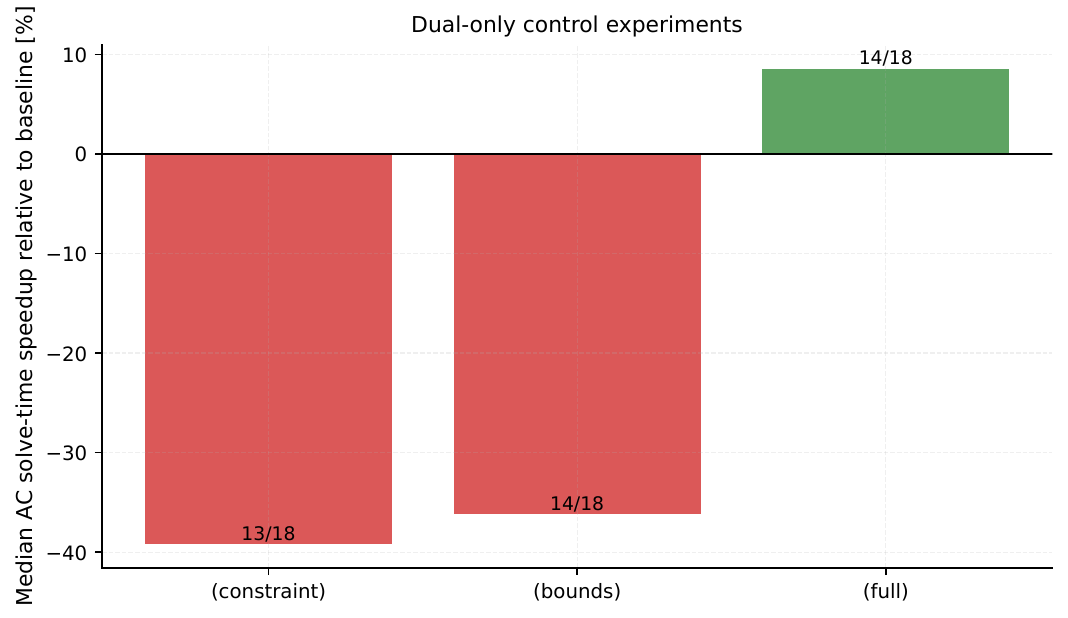}%
}{%
  \IfFileExists{figs/fig_14_dual_only_bar.png}{%
    \includegraphics[width=0.95\columnwidth]{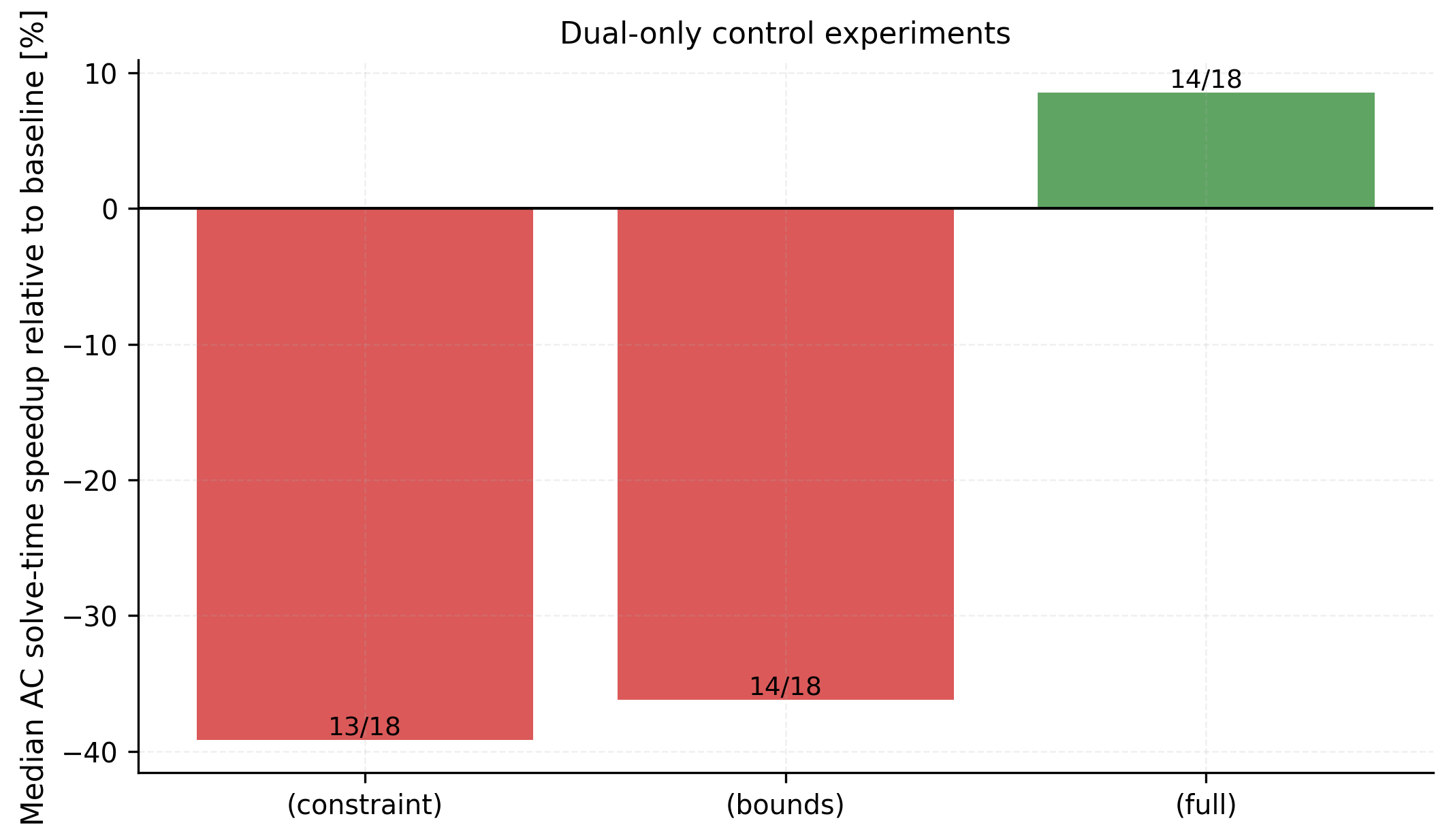}%
  }
}
\caption{Dual-only control experiments (no primal blocks): median AC solve-time speedup relative to baseline. Convergence counts are shown at the base of each bar.}
\label{fig:dual_only_bar}
\end{figure}

\begin{table}[!t]
\centering
\caption{Static ranking of DC-seeded combinations by median end-to-end time. The DCOPF presolve cost is common to all three combinations.}
\label{tab:combo_rank_dc}
\small
\setlength{\tabcolsep}{6pt}
\begin{adjustbox}{width=\columnwidth}
\begin{tabular}{lrrrrr}
\toprule
Combination & Converged & Median solve [s] & Median E2E [s] & Median iter & Case wins \\
\midrule
$V_a$ & 17 & 2.630 & 3.239 & 38 & 3 \\
$P_g$ & 17 & 2.856 & 3.434 & 40 & 6 \\
$P_g{+}V_a$ & 18 & 4.008 & 4.628 & 34 & 9 \\
\bottomrule
\end{tabular}
\end{adjustbox}
\end{table}

\begin{figure}[!t]
\centering
\IfFileExists{figs/fig_12_dc_native_boxplot.pdf}{%
  \includegraphics[width=0.95\columnwidth]{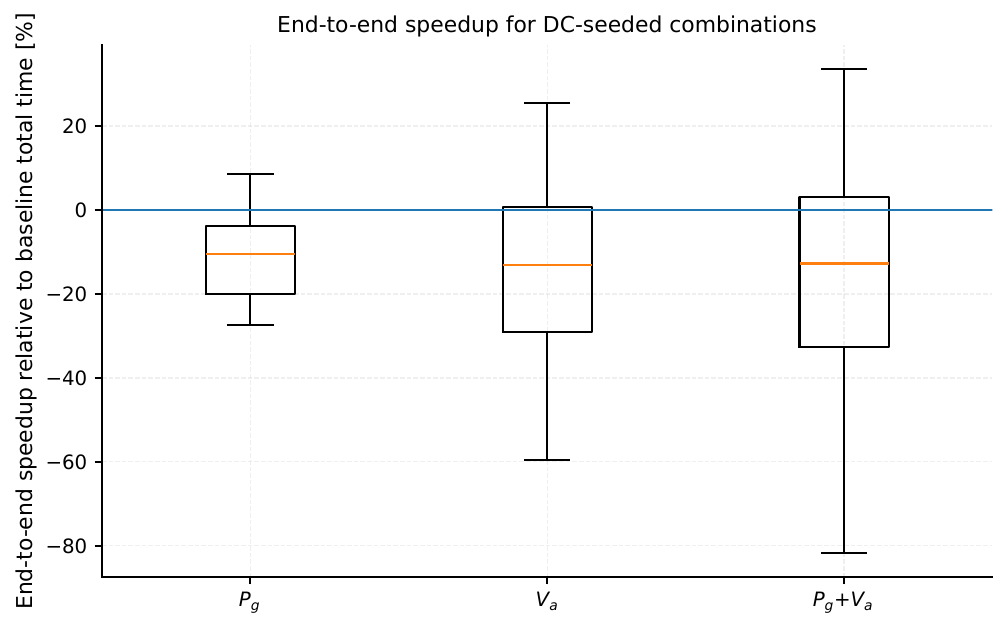}%
}{%
  \IfFileExists{figs/fig_12_dc_native_boxplot.png}{%
    \includegraphics[width=0.95\columnwidth]{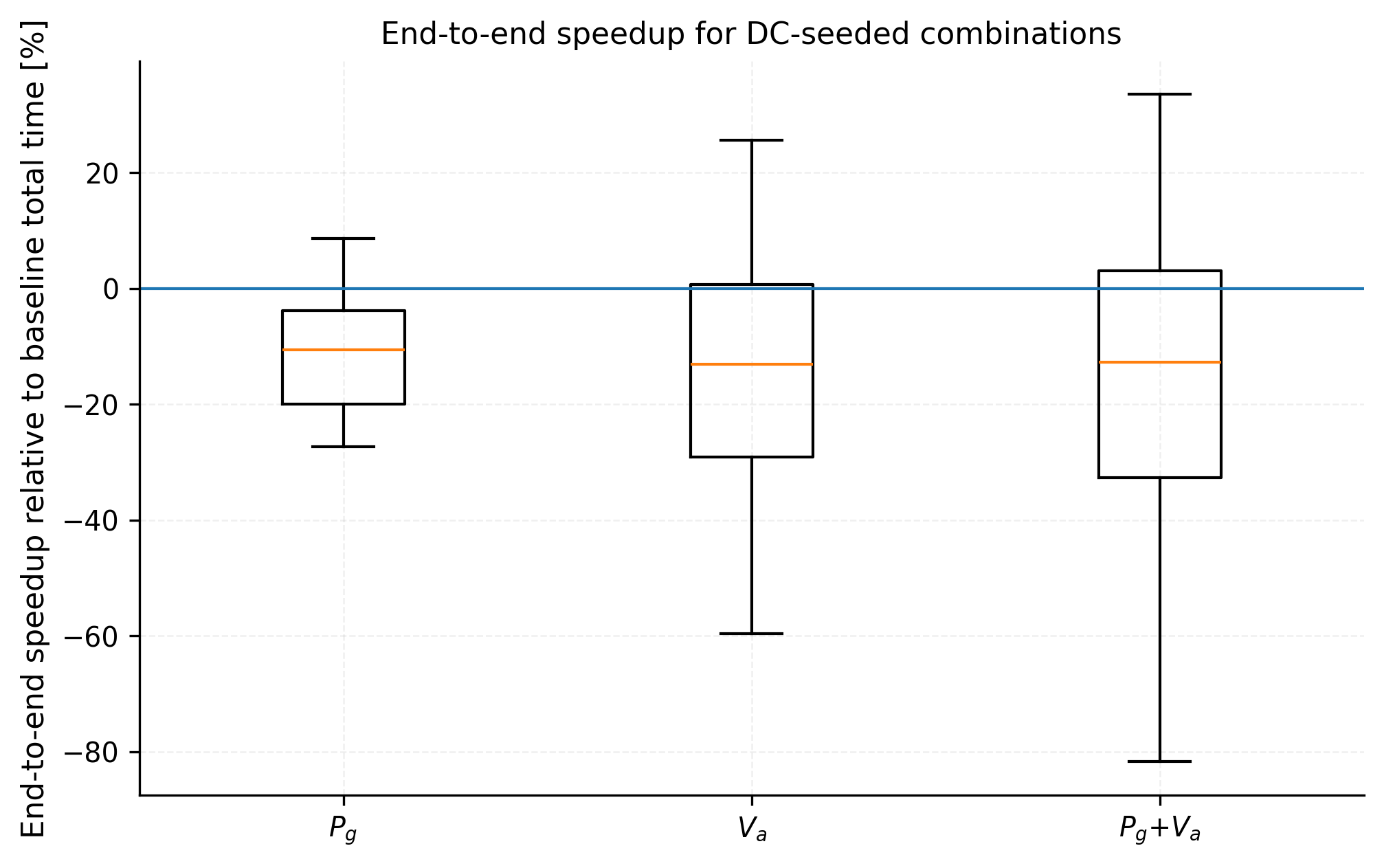}%
  }
}
\caption{Box plots of end-to-end speedup (\%) relative to baseline total time for the three DC-seeded combinations.}
\label{fig:dc_native_boxplot}
\end{figure}

\begin{figure}[!t]
\centering
\IfFileExists{figs/fig_07_speedup_vs_case_size.pdf}{%
  \includegraphics[width=0.95\columnwidth]{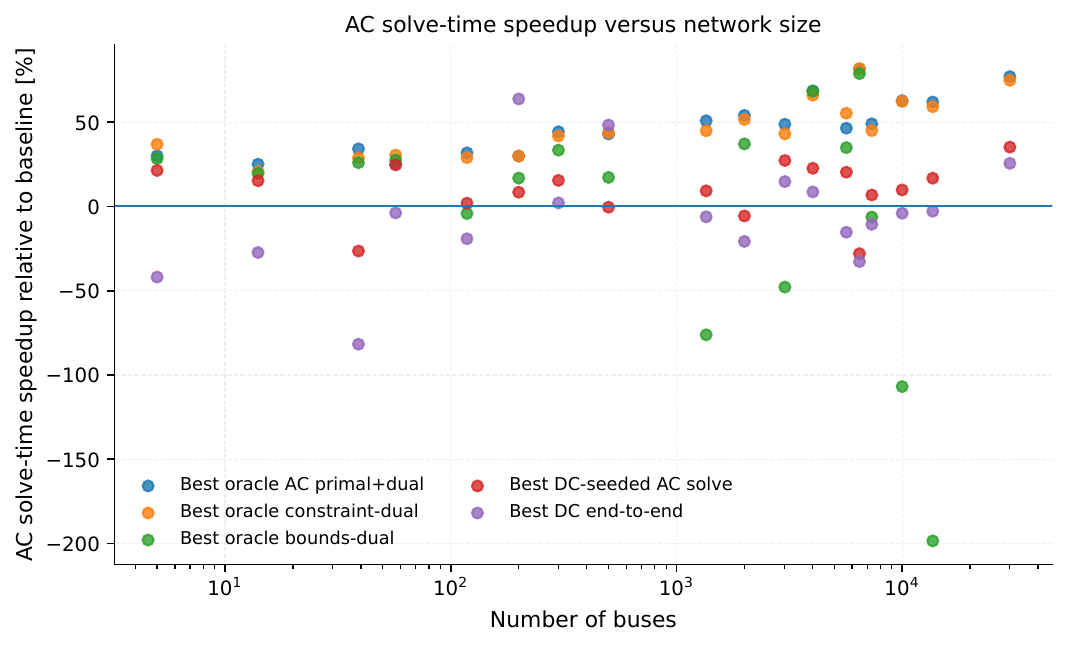}%
}{%
  \IfFileExists{figs/fig_07_speedup_vs_case_size.png}{%
    \includegraphics[width=0.95\columnwidth]{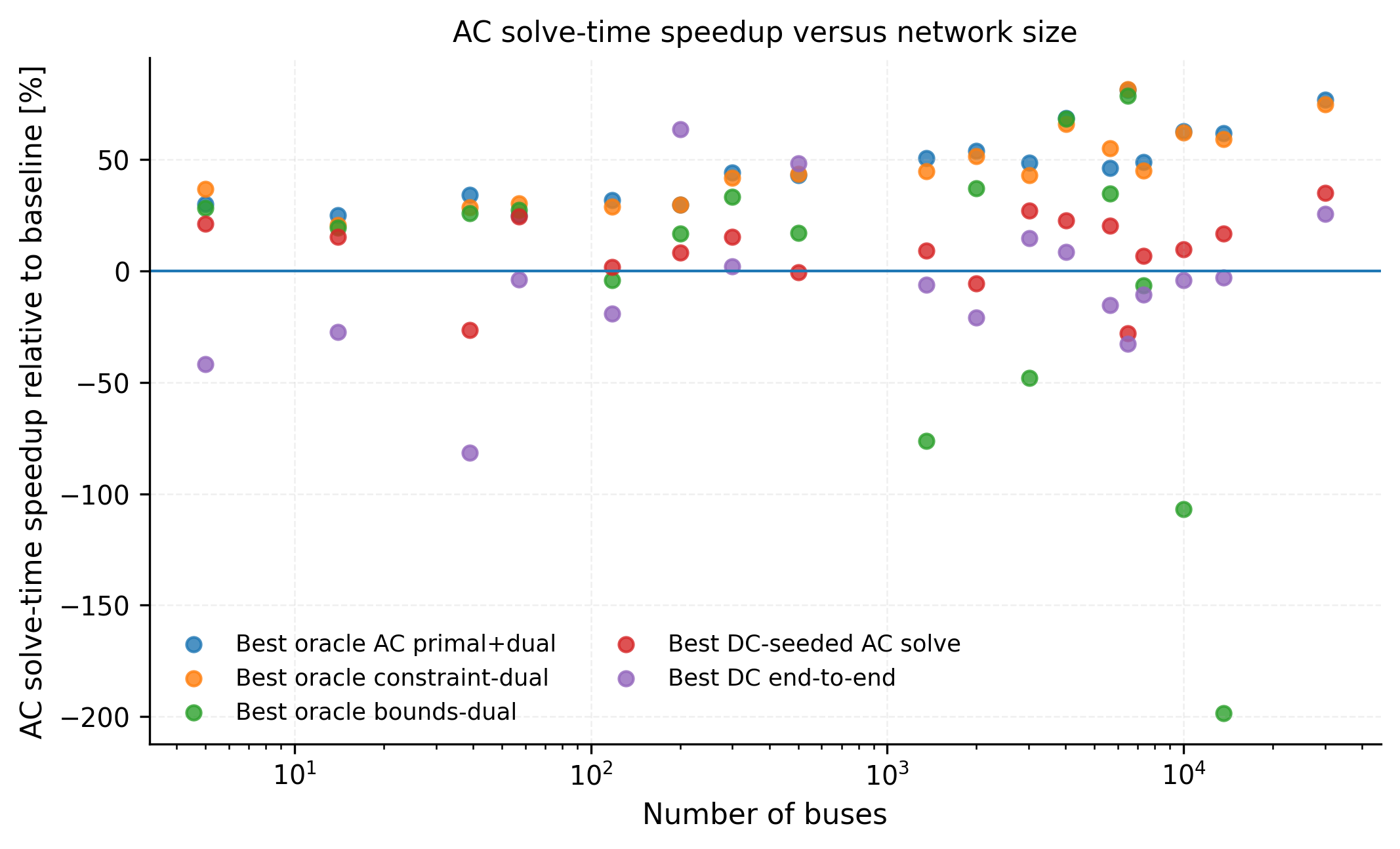}%
  }
}
\caption{AC solve-time speedup (\%) versus network size for the case-wise best envelope of each family.}
\label{fig:speedup_vs_case_size}
\end{figure}

\section{Static Combination Rankings}
\label{app:rankings}

Table~\ref{tab:combo_rank_oracle} provides the full fixed-policy rankings for all 15 combinations in each oracle family, and Table~\ref{tab:all_warmstarts_matrix} provides the case-level results matrix. In the O-PD family, the minimum-median-solve-time criterion selects a low-convergence policy, illustrating the survivor-bias problem discussed in Section~\ref{sec:global}. The full-vector $P_g{+}Q_g{+}V_m{+}V_a$ restart has a higher median solve time because it converges on all 18 baseline-convergent cases, but it remains the best practical fixed policy by convergence and within-family case wins.

\fi

\end{document}